\tikzset{
    >=stealth,
    every picture/.style={thick},
    graphs/every graph/.style={empty nodes},
}
\tikzstyle{vertex}=[
\tikzstyle{printersafe}=[decoration={snake,amplitude=0pt}]
\newcommand{\Spec}{\operatorname{Spec}}
\newcommand{\pp}{\mathbb{P}}
\renewcommand{\qq}{\mathbb{Q}}
\newcommand{\zz}{\mathbb{Z}}
\newcommand{\cc}{\mathbb{C}}
\newcommand{\kk}{\mathbb{K}}
\def\O#1.{\mathcal {O}_{#1}}			
\def\pr #1.{\mathbb P^{#1}}				
\def\af #1.{\mathbb A^{#1}}			
\def\ses#1.#2.#3.{0\to #1\to #2\to #3 \to 0}	
\def\xrar#1.{\xrightarrow{#1}}			
\def\K#1.{K_{#1}}						
\def\bA#1.{\mathbf{A}_{#1}}			
\def\bM#1.{\mathbf{M}_{#1}}				
\def\bL#1.{\mathbf{L}_{#1}}				
\def\bB#1.{\mathbf{B}_{#1}}				
\def\bK#1.{\mathbf{K}_{#1}}			
\def\subs#1.{_{#1}}					
\def\sups#1.{^{#1}}						
\DeclareMathOperator{\coeff}{coeff}	
\DeclareMathOperator{\Supp}{Supp}
  \newtheorem{theorem}{Theorem}[section]
  \newtheorem{lemma}[theorem]{Lemma}
  \newtheorem{proposition}[theorem]{Proposition}
  \newtheorem{corollary}[theorem]{Corollary}
  \newtheorem{notation}[theorem]{Notation}
  \newtheorem{definition}[theorem]{Definition}
  \newtheorem{example}[theorem]{Example}
\newtheorem{remark}[theorem]{Remark}
\theoremstyle{remark}
\numberwithin{equation}{section}
\begin{document}

\title[Bounded deformations of $(\epsilon,\delta)$-log canonical singularities]{Bounded deformations of $(\epsilon,\delta)$-log canonical singularities}

\author[J.~Han]{Jingjun Han}
\address{
Department of Mathematics, Johns Hopkins University, Baltimore, MD 21218, USA}
\email{jhan@math.utah.edu}

\author[J.~Liu]{Jihao Liu}
\address{
Department of Mathematics, University of Utah, 155 S 1400 E, JWB 233,
Salt Lake City, UT 84112, USA}
\email{jliu@math.utah.edu}

\author[J.~Moraga]{Joaqu\'in Moraga}
\address{
Department of Mathematics, University of Utah, 155 S 1400 E, JWB 233,
Salt Lake City, UT 84112, USA}
\email{moraga@math.utah.edu}

\date{\today}

\subjclass[2010]{Primary 14E30, 
Secondary 14B05.}
\maketitle

\begin{abstract}
In this paper we study $(\epsilon,\delta)$-lc singularites, i.e.
$\epsilon$-lc singularities admitting a $\delta$-plt blow-up.
We prove that $n$-dimensional $(\epsilon,\delta)$-lc singularities are bounded up to a deformation, and $2$-dimensional $(\epsilon,\delta)$-lc singularities form a bounded family.
Furthermore, we give an example which shows that $(\epsilon,\delta)$-lc singularities
are not bounded in higher dimensions, even in the analytic sense. 
\end{abstract}

\setcounter{tocdepth}{1}
\tableofcontents

\section{Introduction}
Throughout this paper, we work over an algebraically closed field $\mathbb{K}$ of characteristic zero.

Log canonical singularities (lc singularities for short) are the main class of singularities of the minimal model program.
In many problems of birational geometry, 
we are interested in lc singularities whose log discrepancies are greater than zero. These singularities are called Kawamata log terminal singularities (klt singularities for short), which can be viewed as the local version of log Fano varieties ~\cite[3.1]{Bir18}.
Indeed, given a klt singularity $x\in X$, there exists a birational modification $\pi \colon Y \rightarrow X$ which extracts a unique divisor $E$ mapping onto $x\in X$. $E$ is a log Fano variety (c.f.~\cite{Sho96,Pro00,Kud01,Xu14}), hence many properties of $E$, such as log discrepancies and complements, are reflected on the singularity $x\in X$. Indeed, the boundedness of $\epsilon$-lc Fano varieties due to Birkar~\cite{Bir16a,Bir16b} has implications on the control of invariants of singularities of $x\in X$. More precisely, for singularities $x\in X$ of fixed dimension $n$ whose log discrepancies are $\geq\epsilon$ and admitting a $\delta$-plt blow-up, the Cartier indices of any $\mathbb Q$-Cartier Weil divisor near $x\in X$ and the minimal log discrepancies of $X$ at $x\in X$ are both contained in a finite set which only depends on $n,\epsilon$ and $\delta$ (c.f.~\cite{Sho00,Mor18a,Mor18b,HLS19}). 

The singularities above will be called $n$-dimensional \emph{$(\epsilon,\delta)$-log canonical singularities} ($(\epsilon,\delta)$-lc singularities for short) in this paper.  It is then natural to ask whether $(\epsilon,\delta$)-lc singularities are contained in a bounded family, even in the analytic sense. Unfortunately, this question has a negative answer even in dimension $3$, as shown in Example~\ref{unbounded} below. Nevertheless, we are able to show that $(\epsilon,\delta)$-singularities are contained in a bounded family up to deformation, which is the main result of our paper:

\begin{theorem}\label{thm:bounded-deformations}
Let $n$ be a positive integer, $\epsilon$ and $\delta$ two positive real numbers. Suppose $\mathcal{D}_n(\epsilon,\delta)$ is the class of $n$-dimensional $\qq$-Gorenstein $(\epsilon,\delta)$-lc singularities.

Then $\mathcal{D}_n(\epsilon,\delta)$ is a bounded family up to a deformation.
\end{theorem}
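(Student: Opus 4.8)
The plan is to descend from the singularity to a $\delta$-plt blow-up, use adjunction to extract an $(n-1)$-dimensional $\delta$-lc log Fano pair, apply Birkar's boundedness of $\epsilon$-lc Fano varieties to that pair, and finally exhibit $x\in X$ as a $\qq$-Gorenstein degeneration of the orbifold cone over this bounded log Fano; the cones will then form a bounded subfamily. Concretely, let $x\in X\in\mathcal{D}_n(\epsilon,\delta)$ and fix a $\delta$-plt blow-up $\pi\colon Y\to X$ with irreducible exceptional divisor $E$ mapping onto $x$, so that $(Y,E)$ is $\delta$-plt, $\rho(Y/X)=1$ and $-(K_Y+E)$ is $\pi$-ample. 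Writing $K_Y+E=\pi^*K_X+aE$ with $a:=a(E;X)\geq\epsilon>0$, we have $-(K_Y+E)\equiv_\pi-aE$, so $-E$ is $\pi$-ample; set $L:=-E|_E$, an ample $\qq$-divisor on $E$. Divisorial adjunction produces a projective $\delta$-lc pair $(E,B_E)$, $B_E=\operatorname{Diff}_E(0)$, with
\[
K_E+B_E=(K_Y+E)|_E=-aL ,
\]
so $(E,B_E)$ is a $\delta$-lc log Fano of dimension $n-1$ polarized by $L=\tfrac1a\bigl(-(K_E+B_E)\bigr)$.

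Next I would prove that the polarized triple $(E,B_E,L)$ lies in a bounded family. By \cite{Bir16a,Bir16b} the $\delta$-lc log Fanos $(E,B_E)$ of dimension $n-1$ are bounded; in particular $\vol(-(K_E+B_E))$ is bounded and $K_E+B_E$ has bounded Cartier index. Since $a\geq\epsilon$, $\vol(L)=a^{-(n-1)}\vol(-(K_E+B_E))$ is bounded above, while the finiteness of the local Cartier indices near $x$ \cite{Mor18a,Mor18b,HLS19} bounds the Cartier index of $L$ (and bounds $a(E;X)$), so the triples $(E,B_E,L)$ are bounded. Hence there is $r=r(n,\epsilon,\delta)$ with $rL$ very ample, giving embeddings $E\hookrightarrow\mathbb{P}^N$ of bounded degree with $N=N(n,\epsilon,\delta)$, and the section rings $R:=\bigoplus_{m\geq0}H^0(E,\lfloor mL\rfloor)$ are, up to finitely many possibilities, generated in bounded degree with a bounded number of generators and relations.

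Then I would carry out the degeneration. For $v=\ord_E$ form the extended Rees algebra $\mathcal{R}=\bigoplus_{m\in\zz}\mathfrak{a}_m t^{-m}$, where $\mathfrak{a}_m=\{f\in\mathcal{O}_X:v(f)\geq m\}$ and $\mathfrak{a}_m=\mathcal{O}_X$ for $m\leq0$. Using $\pi_*\mathcal{O}_Y(-mE)=\mathfrak{a}_m$, the exact sequences $0\to\mathcal{O}_Y(-(m+1)E)\to\mathcal{O}_Y(-mE)\to\mathcal{O}_E(\lfloor mL\rfloor)\to0$, $\pi$-ampleness of $-E$, and relative Kawamata--Viehweg vanishing for the plt pair $(Y,E)$, one gets $\mathfrak{a}_m/\mathfrak{a}_{m+1}\hookrightarrow H^0(E,\lfloor mL\rfloor)$ with equality for $m\gg0$; hence $\mathcal{R}$ is finitely generated and $\mathcal{X}:=\Spec_{\mathbb{A}^1}\mathcal{R}\to\mathbb{A}^1$ is a flat family whose fibers over $t\neq0$ are the germ $x\in X$ and whose central fiber is $\Spec(\operatorname{gr}_v\mathcal{O}_X)$, which (up to normalization) is the orbifold cone $X_0=\Spec R$ — the standard test configuration attached to a Kollár component, obtainable equivalently by deforming $Y$ to the total space of $L$ over $E$ and contracting the zero section. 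One checks, from $(Y,E)$ plt and $K_E+B_E\sim_\qq-aL$, that this degeneration is $\qq$-Gorenstein and that $X_0$ is klt. Finally, since $(E,B_E,L)$ ranges in a bounded family and $r$ is fixed, the cones $X_0$ form a bounded family: each is $\Spec$ of a graded ring of bounded complexity, e.g.\ a cyclic $\mu_r$-cover of bounded degree of the affine cone over $E\hookrightarrow\mathbb{P}^N$. As every $x\in X\in\mathcal{D}_n(\epsilon,\delta)$ is thus a $\qq$-Gorenstein deformation of such an $X_0$, the class $\mathcal{D}_n(\epsilon,\delta)$ is bounded up to deformation.

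The main obstacle is the degeneration step: finite generation of $\operatorname{gr}_v\mathcal{O}_X$ for the Kollár-component valuation, and, above all, checking that the resulting family $\mathcal{X}\to\mathbb{A}^1$ is $\qq$-Gorenstein with klt central fiber (the point $B_E\neq0$ makes the $\qq$-Gorenstein claim delicate, since $K_E$ alone need not be proportional to $L$). This is precisely where passing to a deformation becomes unavoidable — Example~\ref{unbounded} shows that $X$ itself can fail to be bounded — and it relies on the fine structure of plt blow-ups rather than on the comparatively soft boundedness input from Birkar's theorem. A second, more bookkeeping-type, difficulty is making precise that the "orbifold data" attached to the cone (the index $r$, the finitely many Hilbert polynomials of $(E,B_E,L)$, the cyclic-cover datum) really forms a finite set, so that boundedness of the base $(E,B_E,L)$ transfers to boundedness of the cones $X_0$.
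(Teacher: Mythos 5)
Your proposal follows essentially the same route as the paper: take a $\delta$-plt blow-up, apply adjunction to get the $\delta$-lc log Fano $(E,\operatorname{Diff}_E(0))$ which is bounded by BAB, control the Cartier index of $E$ (equivalently of $L=-E|_E$) via the known finiteness of local Cartier indices and of $a_E(X,0)$, and degenerate via the extended Rees algebra of $\operatorname{ord}_E$ to a cone over $(E,L)$, which lies in a bounded family of cone singularities. The only notable difference is cosmetic: what you flag as the main obstacle (verifying the central fiber is $\qq$-Gorenstein klt and that bounded $(E,B_E,L)$ implies bounded cones) is exactly what the paper isolates into Propositions~\ref{prop:control-ld-vertex}, \ref{prop:flat-deformation} and the appeal to the pre-existing boundedness theorem for cone singularities with bounded isotropies, using the cyclic $\mu_m$-quotient $X_0\to X_0'$ to reduce to the trivial-isotropy case — the same cyclic-cover device you invoke.
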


The above theorem still hold if we consider pairs $(X,B)$ such that the coefficients of $B$ are $\geq c$ for some fixed positive real number $c$. See Theorem~\ref{thm:bounded-deformations-boundary} for the precise argument.

For $(\epsilon,\delta)$-surface singularities, we can show that they are contained in an analytically bounded family. More precisely, we have the following:

\begin{corollary}\label{thm:bounded-surfaces}
Suppose we work over the field of complex numbers $\mathbb C$. Let $\epsilon,\delta$ be two positive real numbers, and $\mathcal{D}_2(\epsilon,\delta)$ the set of all $(\epsilon,\delta)$-lc surface singularities. Then there exists a positive real number $\epsilon_0$ and a positive integer $N$ depending only on $\epsilon$ and $\delta$ satisfying the following. Assume 
$$\mathcal{C}_{2}(\epsilon_0,N):=\{(x\in X)\in\mathcal{D}_2(\epsilon_0,0)\mid  
\text{
$x\in X$ is a cone singularity with isotropies at most $N$}\},$$
then
\begin{enumerate}
    \item $\mathcal{C}_{2}(\epsilon_0,N)$ is finite,
    \item $\mathcal{D}_2(\epsilon,\delta)$ is analytically bounded, and
    \item any element of $\mathcal{D}_2(\epsilon,\delta)$ degenerates to an element of $\mathcal{C}_{2}(\epsilon_0,N)$.
\end{enumerate}
\end{corollary}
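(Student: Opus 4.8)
The plan is to derive Corollary~\ref{thm:bounded-surfaces} from Theorem~\ref{thm:bounded-deformations} specialized to $n=2$, upgrading ``bounded up to deformation'' to ``analytically bounded'' by exploiting the rigidity of surface singularities. First I would recall the classification of $(\epsilon,\delta)$-lc surface singularities: every such germ $x\in X$ is a quotient singularity $\cc^2/G$ for a finite subgroup $G\subset \mathrm{GL}_2(\cc)$ whose order is bounded in terms of $\epsilon$ (since $\epsilon$-lc surface singularities have bounded local fundamental group by Birkar's boundedness, or directly by the discrepancy computation on the minimal resolution), and admitting a $\delta$-plt blow-up forces the singularity to be, up to bounded ambiguity, a cyclic quotient, i.e.\ a cone singularity, after possibly passing to a partial degeneration. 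So the natural target family is $\mathcal{C}_2(\epsilon_0,N)$, the finitely many cone (toric) surface singularities $\frac{1}{r}(1,a)$ with $r\le N$ and $\epsilon_0$-lc. Step one, the finiteness claim~(1), is then essentially a counting statement: there are only finitely many pairs $(r,a)$ with $r\le N$, and imposing the $\epsilon_0$-lc condition only cuts this down further; this step is routine once $N$ and $\epsilon_0$ are fixed.

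For~(2) and~(3), I would argue as follows. By Theorem~\ref{thm:bounded-deformations}, the class $\mathcal{D}_2(\epsilon,\delta)$ is bounded up to deformation: there is a finite type morphism $\mathcal{X}\to T$ with every germ in $\mathcal{D}_2(\epsilon,\delta)$ appearing as a fiber, and the key point of that theorem's proof (which I may assume) produces, in each fiber of $T$, a special fiber which is a cone singularity with controlled isotropies — this gives the degeneration statement~(3) once I fix $N$ and $\epsilon_0$ to be the bounds coming from Theorem~\ref{thm:bounded-deformations} applied with these $(\epsilon,\delta)$ and $n=2$. The one thing that needs care is that the special/central fiber of the degeneration is itself $\epsilon_0$-lc for an $\epsilon_0>0$ depending only on $\epsilon,\delta$: this follows from lower semicontinuity of minimal log discrepancies along $\qq$-Gorenstein families (the mld of the central fiber is $\geq$ the mld of a general fiber is $\geq\epsilon$, possibly after shrinking $\epsilon$ to $\epsilon_0$ to absorb the $\qq$-Gorenstein index jump), so the central fiber indeed lies in $\mathcal{C}_2(\epsilon_0,N)$. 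For~(2), analytic boundedness, I would note that a surface cone singularity $\frac{1}{r}(1,a)$ is \emph{rigid}: it has no nontrivial local deformations (cyclic quotient surface singularities other than the $A_n$ ones are rigid by Riemenschneider, and the deformations of $A_n$ stay within the analytically bounded family of $A_k$, $k\le n$). Hence the total space $\mathcal{X}\to T$, after base change to a neighborhood of each central fiber, is analytically \emph{trivial} in a punctured neighborhood up to finite ambiguity, so every fiber — i.e.\ every germ in $\mathcal{D}_2(\epsilon,\delta)$ — is analytically isomorphic to one of the finitely many germs in $\mathcal{C}_2(\epsilon_0,N)$; this both proves~(2) and strengthens~(3) to an analytic isomorphism onto a member of the finite family. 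An alternative, more self-contained route for~(2) avoids rigidity entirely: $\mathcal{D}_2(\epsilon,\delta)\subset\mathcal{D}_2(\epsilon,0)$, and $\epsilon$-lc surface germs are already analytically bounded because they are quotients $\cc^2/G$ with $|G|$ bounded by a function of $\epsilon$, and for fixed bound on $|G|$ there are only finitely many conjugacy classes of finite subgroups of $\mathrm{GL}_2(\cc)$; one then only needs $\mathcal{C}_2$ and the degeneration~(3) from the deformation argument.

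I expect the main obstacle to be~(3) together with the compatibility of constants: extracting from the proof of Theorem~\ref{thm:bounded-deformations} that the degeneration can be taken with central fiber a \emph{cone} (equivalently toric) singularity — rather than merely a bounded $\qq$-Gorenstein germ — and with isotropy order $\le N$ and log discrepancies $\ge\epsilon_0$, all depending only on $\epsilon$ and $\delta$. Concretely this means analyzing the $\delta$-plt blow-up $\pi\colon Y\to X$ with exceptional divisor $E$: in dimension two $E$ is a curve, and the $\delta$-plt condition forces $(E,\mathrm{Diff}_E(0))$ to be an $\epsilon'$-lc log Fano curve, hence $E\cong\pp^1$ with at most $N'=N'(\delta)$ orbifold points, and one degenerates $X$ by degenerating this bounded orbifold $\pp^1$ to the ``maximally toric'' configuration, producing a cyclic quotient singularity in the limit; keeping track that each step (the MMP, the degeneration of the orbifold structure, the cone construction) preserves $\epsilon_0$-lc for a uniform $\epsilon_0$ and does not increase the isotropy beyond $N$ is the delicate bookkeeping. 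The rigidity input for~(2) is standard but must be invoked carefully in the $A_n$ case where deformations are nontrivial yet stay inside the finite family $\{A_k\mid k\le n\}$, which is harmless for analytic boundedness.
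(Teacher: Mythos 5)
Your overall skeleton (use Theorem~\ref{thm:bounded-deformations} in dimension $2$ to get a degeneration to a bounded family of cone singularities for~(3), count for~(1), then upgrade to analytic boundedness for~(2)) matches the paper, but both routes you propose for~(2) rest on false inputs, and your description of the target family $\mathcal{C}_2(\epsilon_0,N)$ is also inaccurate.

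First, $\mathcal{C}_2(\epsilon_0,N)$ does not consist of cyclic quotients $\frac{1}{r}(1,a)$. A cone surface singularity is $\Spec\bigl(\bigoplus_k H^0(\pp^1,\mathcal O(kD))\bigr)$ for an ample $\qq$-divisor $D$ on $\pp^1$, and $D$ can have three fractional points: the quasi-homogeneous $D_n$ and $E_6,E_7,E_8$ germs, and more generally cones over orbifold footballs, all belong here but are not cyclic. The paper's finiteness argument~(1) therefore has to parametrize $D=\tfrac{a_0}{N}\{0\}+\tfrac{a_1}{N}\{1\}+\tfrac{a_\infty}{N}\{\infty\}$ (at most three fractional points by the $\delta$-lc condition on the Fano quotient, denominators $\le N$ by Proposition~\ref{isotropy-control}) and then bound $a_\infty$ using the $\epsilon_0$-lc condition via the Fano angle (Proposition~\ref{angle-vs-log-discrepancy}); your ``finitely many $(r,a)$ with $r\le N$'' is not the right count.

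Second, and more seriously, the rigidity you invoke for~(2) is false in dimension~$2$. Schlessinger's rigidity of quotient singularities holds only in dimension $\ge 3$; two-dimensional cyclic quotient singularities (e.g.\ the cone $\frac{1}{n}(1,1)$ over the rational normal curve of degree $n$) have large, explicitly known versal deformation spaces and are far from rigid. Your alternative route fails too: it is not true that $\epsilon$-lc surface germs are analytically bounded, since the $A_n$ singularities are all $1$-lc but have $|G|=n+1$ unbounded and pairwise non-isomorphic analytic germs (the paper's Example~\ref{delta-zero} makes exactly this point, and explains why the $\delta$-plt hypothesis is needed to cut the $A_n$ family down). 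The paper's actual mechanism for~(2) is different and does not use rigidity at all: it invokes the Schlessinger--Pinkham theorem (Theorem~\ref{versal-deformation-surfaces}) that each isolated singularity with $\cc^*$-action admits an \emph{algebraic} (finite-dimensional, equivariant) versal deformation $\mathcal X(X_0)\to S(X_0)$. Since~(3) says every germ in $\mathcal D_2(\epsilon,\delta)$ arises as a fiber of a deformation of one of the finitely many $X_0\in\mathcal C_2(\epsilon_0,N)$, versality places each such germ as a fiber of the corresponding $\mathcal X(X_0)\to S(X_0)$, and the finite disjoint union of these is the analytic bounding family. You should replace your rigidity step with this versality argument.
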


This corollary is proved using Theorem~\ref{thm:bounded-deformations}, classic computations for surface singularities, and the theory of deformationof quasi-homogeneous surface singularities developed by Schlessinger, Pinkahm, Wahl, etc. (c.f.~\cite{Pin74, Sch73, Wah76}).

As a consequence of Theorem \ref{thm:bounded-deformations}, we show that any lower-semicontinuous (resp. upper-semicontinuous) invariant of singularities has a lower bound (resp. upper bound) for $(\epsilon,\delta)$-lc singularities:

\begin{theorem}\label{lower-bound}
Let $n$ be a positive integer,  $\epsilon$ and $\delta$ two positive real numbers. Let $i$ be a lower-semicontinuous (resp. upper-semicontinuous) invariant of klt singularities.
Then there exists a constant $i_0$ depending only on $n,\epsilon,\delta$ and $i$, such that for any $n$-dimensional $(\epsilon,\delta)$-lc singularity $x\in X$, $i(x\in X)\geq i_0$ (resp. $i(x\in X)\leq i_0)$.
\end{theorem}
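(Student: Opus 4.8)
The plan is to reduce the statement to the boundedness-up-to-deformation result of Theorem~\ref{thm:bounded-deformations} together with the semicontinuity hypothesis on $i$. First I would recall what it means for $\mathcal{D}_n(\epsilon,\delta)$ to be bounded up to a deformation: there is a morphism of finite type $\mathcal{X}\to T$ with $T$ of finite type over $\mathbb{K}$, a section $\sigma\colon T\to\mathcal{X}$ (or a closed subscheme $\Sigma\subset\mathcal{X}$ finite over $T$), and the property that every $(x\in X)\in\mathcal{D}_n(\epsilon,\delta)$ is realized, after a finite base change, as a deformation of some fiber germ $(\sigma(t)\in\mathcal{X}_t)$; more precisely $x\in X$ sits in a flat family over a (pointed) curve or disk whose special fiber is such a germ. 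So each $(\epsilon,\delta)$-lc singularity degenerates to one of finitely many ``models'' living in the fibers over $T$. Since $T$ is of finite type, only finitely many geometric points $t_1,\dots,t_m\in T$ are needed to list all the special fibers up to isomorphism of germs (here I would invoke any Noetherian/constructibility argument already implicit in the phrase ``bounded up to deformation''), giving a finite set $S=\{(\sigma(t_j)\in\mathcal{X}_{t_j})\}_{j=1}^m$ of klt germs.

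Next I would use the semicontinuity of $i$ along the flat family realizing the degeneration. Suppose $i$ is lower-semicontinuous. Given $x\in X$ in $\mathcal{D}_n(\epsilon,\delta)$, pick the flat family $\mathcal{Y}\to C$ over a smooth pointed curve $(0\in C)$ with general fiber germ $\cong (x\in X)$ and special fiber germ the model $(\sigma(t_j)\in\mathcal{X}_{t_j})\in S$. Lower-semicontinuity gives $i(x\in X)=i(\mathcal{Y}_c)\ge i(\mathcal{Y}_0)=i(\sigma(t_j)\in\mathcal{X}_{t_j})$ for $c$ near $0$. Setting $i_0:=\min_{1\le j\le m} i(\sigma(t_j)\in\mathcal{X}_{t_j})$ — a minimum of finitely many real numbers, hence finite — we obtain $i(x\in X)\ge i_0$ for every $x\in X\in\mathcal{D}_n(\epsilon,\delta)$. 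The constant $i_0$ depends only on the family $\mathcal{X}\to T$ (hence only on $n,\epsilon,\delta$ by Theorem~\ref{thm:bounded-deformations}) and on $i$. The upper-semicontinuous case is symmetric, replacing $\min$ by $\max$ and reversing the inequality.

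The main obstacle I anticipate is matching the precise meaning of ``bounded up to a deformation'' used in Theorem~\ref{thm:bounded-deformations} with what the semicontinuity hypothesis needs: one must check that the degeneration produced there is a \emph{flat} degeneration of germs (so that semicontinuity of invariants of klt singularities genuinely applies) and that the special fibers are themselves klt, so that $i$ is even defined on them. A secondary subtlety is that $i$ is an invariant of ``klt singularities'' in the abstract, so I would need the special fibers in $S$ to be klt — this should follow from the construction, since the families in Theorem~\ref{thm:bounded-deformations} are built to have $(\epsilon,\delta)$-lc (in particular klt) central fibers, or at worst klt ones — and I would cite inversion of adjunction / openness of klt to ensure the whole family is klt near the section. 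Once these compatibilities are in place, the argument is just ``finitely many models $+$ semicontinuity $\Rightarrow$ a uniform bound,'' and no further input is required.
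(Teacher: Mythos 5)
Your overall strategy --- degenerate to a bounded family and then invoke semicontinuity --- matches the paper's argument (which is given for the more general Theorem~\ref{lower-bound-boundary}), but there is a genuine gap in the middle step. You assert that ``only finitely many geometric points $t_1,\dots,t_m\in T$ are needed to list all the special fibers up to isomorphism of germs,'' and then set $i_0$ to be a minimum over a finite set. This is false in general: a bounded family $\mathcal{X}\to T$ with $T$ of finite type will typically contain infinitely many mutually non-isomorphic germs (already $\epsilon$-lc del Pezzo surfaces, while bounded by BAB, form infinitely many isomorphism classes), so there is no finite list $S$ to minimize over. ``Bounded'' means ``parametrized by a finite-type scheme,'' not ``finite.''

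The correct replacement for that step --- and what the paper implicitly does when it says ``since $x_0\in(X_0,B_0)$ belongs to a log bounded family, we deduce that there exists $i_0$'' --- is to apply semicontinuity of $i$ a \emph{second} time, this time over the bounding family $\mathcal{X}\to T$ of central fibers (which are cone singularities, hence klt, so $i$ is defined on them). Since $T$ is Noetherian and $i$ is lower-semicontinuous on the family, it is bounded below on $T$ (e.g.\ by Noetherian induction on the closed sets $\{i\le c\}$, or by stratifying $T$ into finitely many pieces on which $i$ is constant when $i$ takes discrete values). That produces the uniform lower bound $i_0$ over the whole bounded family, after which your step ``$i(x\in X)\ge i(x_0\in X_0)\ge i_0$'' goes through exactly as you wrote. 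Your other concerns (flatness of the degeneration, klt-ness of the central fiber) are legitimate but are handled by the construction in Proposition~\ref{prop:flat-deformation}: the degeneration is via the extended Rees algebra, which is flat over $\mathbb{A}^1$, and the central fiber is a klt cone singularity.
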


As an immediate corollary, we show that multiplicities of $(\epsilon,\delta)$-lc singularities of fixed dimension are bounded from above:

\begin{corollary}\label{upper-bound-mult}
Let $n$ be a positive integer, $\epsilon$ and $\delta$ two positive real numbers. Then there exists a positive real number $m$ depending only on $n,\epsilon$ and $\delta$, such that the multiplicity of any $n$-dimensional $(\epsilon,\delta)$-lc singularity is at most $m$.
\end{corollary}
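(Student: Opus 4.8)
The plan is to deduce Corollary~\ref{upper-bound-mult} from Theorem~\ref{lower-bound} by recognizing multiplicity at a point as an upper-semicontinuous invariant of klt singularities. First I would recall that for a fixed $n$-dimensional $(\epsilon,\delta)$-lc singularity $x\in X$, the multiplicity $\mult_x X = e(\mathcal{O}_{X,x})$ is the Hilbert--Samuel multiplicity of the local ring, which is a well-defined positive integer (these singularities are in particular klt, hence normal and Cohen--Macaulay need not hold, but multiplicity is still defined for any local Noetherian ring). The key input is that multiplicity is upper-semicontinuous in flat families: for a flat family $\mathcal{X}\to T$ with a section, the function $t\mapsto \mult_{x_t}\mathcal{X}_t$ is upper-semicontinuous on $T$ (this is classical, e.g. due to Bennett and to the theory of Hilbert--Samuel functions in flat families).

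The main step is therefore to verify that $i(x\in X):=\mult_x X$ fits the hypotheses of Theorem~\ref{lower-bound}, i.e. that it is genuinely an ``upper-semicontinuous invariant of klt singularities'' in the sense used there (one should check the paper's precise definition, but the flat-family upper-semicontinuity above is exactly what is needed). Granting this, Theorem~\ref{lower-bound} applied to $i=\mult$ yields a constant $i_0=i_0(n,\epsilon,\delta)$ with $\mult_x X\le i_0$ for every $n$-dimensional $(\epsilon,\delta)$-lc singularity $x\in X$. Setting $m:=i_0$ (or, if one wants $m$ to be an explicit real number rather than the abstract bound, simply $m:=\lceil i_0\rceil$) finishes the proof.

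The only subtlety — and the place I would be most careful — is the logical direction of semicontinuity: Theorem~\ref{lower-bound} gives a \emph{lower} bound for lower-semicontinuous invariants and an \emph{upper} bound for upper-semicontinuous invariants along \emph{degenerations}, and one must make sure the multiplicity indeed goes \emph{up} under specialization (the special fiber is more singular), so that boundedness up to deformation of $\mathcal{D}_n(\epsilon,\delta)$ — which bounds the multiplicity of the special fibers of the family — forces a uniform bound on the general members too. Concretely: by Theorem~\ref{thm:bounded-deformations} every $x\in X$ in $\mathcal{D}_n(\epsilon,\delta)$ deforms to a singularity $x'\in X'$ lying in a bounded family, so $\mult_{x'}X'$ is bounded by some $m$ depending only on $n,\epsilon,\delta$; upper-semicontinuity then gives $\mult_x X\le \mult_{x'}X'\le m$. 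I would present this as the one-line consequence it is, citing Theorem~\ref{lower-bound} (or, alternatively, giving the two-line direct argument from Theorem~\ref{thm:bounded-deformations} together with upper-semicontinuity of multiplicity), and I do not expect any serious obstacle beyond pinning down the conventions.
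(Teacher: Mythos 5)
Your proposal is correct and takes essentially the same route as the paper: the paper's proof is literally ``follows from Theorem~\ref{lower-bound} and the upper semicontinuity of the multiplicity,'' citing Bennett's thesis~\cite{Ben71}. Your extra care about the direction of semicontinuity (that multiplicity jumps \emph{up} under specialization, so bounding the special fiber bounds the general member) is exactly the point, and matches the mechanism inside the proof of Theorem~\ref{lower-bound}.
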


Moreover, we can control the analytic embedding dimension of $n$-dimensional complex $(\epsilon,\delta)$-log canonical singularities.

\begin{corollary}\label{upper-bound-analytic-embedding-dimension}
Let $n$ be a positive integer, $\epsilon$ and $\delta$ two positive real numbers. Then there exists a positive real number $e$ depending only on $n,\epsilon$ and $\delta$, such that the analytic embedding dimension of any $n$-dimensional complex $(\epsilon,\delta)$-lc singularity is at most $e$.
\end{corollary}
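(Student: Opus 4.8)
The plan is to deduce Corollary~\ref{upper-bound-analytic-embedding-dimension} from Corollary~\ref{upper-bound-mult} together with Theorem~\ref{thm:bounded-deformations}, using the fact that the analytic embedding dimension of a complex singularity $x\in X$ is constant in flat families of singularities (more precisely, it is upper-semicontinuous, and for the deformations produced by Theorem~\ref{thm:bounded-deformations} it is even preserved). First I would observe that, by Theorem~\ref{thm:bounded-deformations}, the family $\mathcal{D}_n(\epsilon,\delta)$ is bounded up to deformation: there is a finite collection of $\qq$-Gorenstein families over finite-type bases such that every $x\in X\in\mathcal{D}_n(\epsilon,\delta)$ appears as the special fiber of one of these families, whose general fiber lies in a genuinely bounded family $\mathcal{B}$. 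The analytic embedding dimension $\edim_{\an}(x\in X)=\dim_{\kk}\mathfrak{m}_x/\mathfrak{m}_x^2$ is upper-semicontinuous in flat families, so it is bounded above on $\mathcal{D}_n(\epsilon,\delta)$ by its supremum over the bounded family $\mathcal{B}$; and since $\mathcal{B}$ is bounded (finite type over $\Spec\kk$), that supremum is finite.

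Alternatively, and perhaps more cleanly, I would run this through Theorem~\ref{lower-bound}: the analytic embedding dimension, viewed as an invariant of complex klt singularities, is upper-semicontinuous under degeneration (this is the standard semicontinuity of $\dim \mathfrak{m}/\mathfrak{m}^2$ in a flat family), so Theorem~\ref{lower-bound} applies verbatim with $i=\edim_{\an}$ and produces the desired bound $e=e(n,\epsilon,\delta)$. The only thing to check is that $\edim_{\an}$ qualifies as an ``upper-semicontinuous invariant of klt singularities'' in the sense intended in Theorem~\ref{lower-bound}, i.e. that it does not increase under the specializations appearing in the proof of Theorem~\ref{thm:bounded-deformations}; this is a basic fact in deformation theory of analytic singularities (the special fiber of a flat family of complex-analytic germs has embedding dimension at least that of the nearby fibers). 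So the proof amounts to: (i) quote Theorem~\ref{lower-bound} with $i=\edim_{\an}$, or equivalently (ii) combine boundedness-up-to-deformation with the semicontinuity of $\edim_{\an}$ and the finiteness of $\edim_{\an}$ on a genuinely bounded family.

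The only mild subtlety is making sure the invariant is applied over $\cc$ and to klt (indeed $(\epsilon,\delta)$-lc, hence klt) singularities, and that the deformation in Theorem~\ref{thm:bounded-deformations} can be taken to be a deformation of complex-analytic germs (or that one may base-change to $\cc$), so that ``analytic embedding dimension'' is even defined and its semicontinuity applies. Since $(\epsilon,\delta)$-lc singularities are klt and we work over $\kk$ of characteristic zero, after choosing an embedding $\kk\hookrightarrow\cc$ (or simply taking $\kk=\cc$ as in Corollary~\ref{thm:bounded-surfaces}) this is automatic. I do not expect any real obstacle here: once Theorem~\ref{lower-bound} is in hand, the statement is essentially a formal consequence, the only content being the (standard) verification that $\edim_{\an}$ is upper-semicontinuous along specializations.

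\begin{proof}
By definition $(\epsilon,\delta)$-lc singularities are $\epsilon$-lc, hence klt. The analytic embedding dimension $\edim_{\an}(x\in X)=\dim_{\cc}\mathfrak{m}_{X,x}/\mathfrak{m}_{X,x}^2$ of a complex klt germ is an upper-semicontinuous invariant of klt singularities: if $\mathcal{X}\to T$ is a flat family of complex-analytic germs with special fiber $x\in X$, then $\edim_{\an}(x\in X)$ is at least the analytic embedding dimension of the nearby fibers. The statement now follows from Theorem~\ref{lower-bound} applied to the invariant $i=\edim_{\an}$: there is a constant $e$ depending only on $n,\epsilon,\delta$ (and $\edim_{\an}$) such that $\edim_{\an}(x\in X)\le e$ for every $n$-dimensional complex $(\epsilon,\delta)$-lc singularity $x\in X$.
\end{proof}
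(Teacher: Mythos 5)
Your proof is correct and matches the paper's argument: the paper likewise deduces the corollary by applying Theorem~\ref{lower-bound} with $i$ equal to the analytic embedding dimension, citing Fischer's \emph{Complex Analytic Geometry}, Proposition 0.35, for the upper-semicontinuity of $\edim_{\an}$ in flat families, which is the standard fact you invoke.
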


We give a brief sketch of the proof of Theorem~\ref{thm:bounded-deformations}.
Let $x\in X$ be an $(\epsilon,\delta)$-singularity of dimension $n$, then there exists a $\delta$-plt blow-up $\pi: Y\rightarrow X$ which extracts a unique divisor $E$. Motivated by the cone singularity degeneration described in~\cite{LX16,LX17}, which is a special case of the normal cone deformation~\cite{Ful98}, we degenerate the singularity $x\in X$ to the cone over $E$ with respect to the $\qq$-polarization induced by the anit-co-normal sheaf $-E|_{E}$. This gives us a flat deformation $\mathcal{X}\rightarrow \mathbb A^1_\kk$ whose fiber over a general point is isomorphic to $X$, and whose fiber over the origin is isomorphic to a cone singularity $x_0\in X_0$ in the classic sense of~\cite{Dem88}.

We show that $E$ has bounded Cartier index on $Y$ via standard arguments of the minimal model program, which makes us possible to bound isotropies of the natural $\kk^*$-action on $X_0$ and the log discrepancies at $x_0\in X_0$. The cone singularity $x_0 \in X_0$ appearing as central fibers of the above deformation, is now bounded according to \cite[Theorem 1]{Mor18b}. In particular, we deduce that $n$-dimensional $(\epsilon,\delta)$-lc singularities are bounded up to deformation.

\subsection*{Acknowledgements}
The authors would like to thank Harold Blum, Christopher Hacon, Chi Li, Yuchen Liu, Lu Qi, V.V. Shokurov and Chenyang Xu for many useful comments.
The second and third author were partially supported by NSF research grants no: DMS-1300750, DMS-1265285 and by a
grant from the Simons Foundation; Award Number: 256202.

\section{Preliminaries}

In this section we introduce some definitions and prove some preliminary results that will be used in the proof of the main theorems. We adopt the standard notions and conventions in ~\cite{KM98,HK10,Kol13}, and will freely use them.

\subsection{Singularities}

In this subsection, we recall the classic definitions of singularities of the minimal model program and introduce $(\epsilon,\delta)$-lc singularities. 
\begin{definition}[Pairs]{\em 
A {\em couple} $(X,D)$ consists of a normal variety $X$ and a reduced divisor $D$ on $X$.  A {\em sub-pair} $(X,B)$ consists of a normal quasi-projective variety $X$ and an $\mathbb R$-divisor $B$ on $X$ such that $K_X+B$ is $\mathbb R$-Cartier. A sub-pair $(X,B)$ is called a {\em pair} if $B\geq 0$.

Let $(X,B)$ be a pair, $E$ a prime divisor on $X$, and $D$ an $\mathbb R$-divisor on $X$. We define $\coeff_ED$ to be the multiplicity of $E$ along $D$. For any divisor $D$ over $X$. Let $\pi:W\to X$
be a log resolution of $(X,B)$ such that $D$ is a divisor on $W$, and suppose that
$$K_W+B_W=\pi^{*}(K_X+B).$$
The \emph{log discrepancy} of $D$ on $W$ with respect to $(X,B)$ is defined as $1-\coeff_{D}B_W$ and is denoted by $a_D(X,B).$

Let $\epsilon$ be a nonnegative real number. We say $(X,B)$ is log canonical (resp. Kawamata log terminal, $\epsilon$-log canonical) if $a_D(X,B)\ge0$ (resp. $>0$, $\ge \epsilon$) for every prime divisor $D$ over $X$. We say $(X,B)$ is purely log terminal (resp. $\epsilon$-purely log terminal) if $a_D(X,B)>0$ (resp. $>\epsilon$) for any exceptional prime divisor $D$ over $X$. For simplicity, we shall write lc (resp. klt, plt) for log canonical (resp. Kawamata log terminal, purely log terminal) in the rest of the paper.}\end{definition}

\begin{definition}[$\delta$-plt blow-up]{\em
Let $\delta$ be a nonnegative real number, $(X,B)$ an lc pair, and $x\in X$ a closed point. A {\em $\delta$-plt blow-up} of  $(X,B)$ at $x\in X$ is a birational morphism $\pi \colon Y\rightarrow X$, such that
\begin{itemize}
\item $\pi$ has a unique exceptional divisor $E$, such that ${\rm center}_XE=\{x\}$,
\item $-E$ is ample over $X$, and 
\item $(Y,B_Y+E)$ is $\delta$-plt near $E$.
\end{itemize}

A $0$-plt blow-up is also called a {\em plt blow-up}.
}
\end{definition}

\begin{definition}{\em 
A pair $(X,B)$ is called {\em $(\epsilon,\delta)$-lc} at $x\in X$
if $(X,B)$ is $\epsilon$-lc near $x$ and
there exists a $\delta$-plt blow-up of $(X,B)$ at $x$.

}
\end{definition}

\begin{remark}{\em 
Any klt singularity is $\epsilon$-lc for some positive real number $\epsilon$. Moreover, since any klt singularity admits a plt blow-up (c.f.~\cite[3.1]{Sho96},~\cite[2.9]{Pro00},~\cite[1.5]{Kud01},~\cite[Lemma 1]{Xu14}), any klt singularity admits a $\delta$-plt blow-up for some positive real number $\delta$. Thus, any klt singularity is $(\epsilon,\delta)$-lc for some positive real numbers $\epsilon$ and $\delta$.}
\end{remark}

\subsection{Deformation, families and boundedness}

In this section, we recall the definitions of deformations, families and boundedness. We also prove a proposition on log boundedness for certain pairs which will be used later.

\begin{definition}{\em 

Let $(X,B)$ be a pair. A {\em deformation} of $X$ is a flat morphism $\mathcal{X}\rightarrow T$ such that $\mathcal{X}_0 \simeq X$. A {\em deformation} of $(X,B)$ is a flat morphism $\mathcal{X}\rightarrow T$ and a divisor $\mathcal{B}\subset \mathcal{X}$, such that:
\begin{itemize}
\item the induced morphism $\mathcal{B}\rightarrow T$ is flat, 
\item $(\mathcal{X}_0,\mathcal{B}_0)\cong (X,B)$, and
\item for any $t\in T$, $(\mathcal{X}_t,\mathcal{B}_t)$ is a pair.
\end{itemize}
}
\end{definition}

\begin{definition}{\em 
A set of couples $\mathcal{C}$ is called {\em bounded} if there exists a projective morphism $\mathcal{X}\rightarrow S$ of varieties of finite type and a reduced divisor $\mathcal{D}\subset \mathcal{X}$, such that for every element $(X,D)\in \mathcal{C}$, there is a closed point $s\in S$ and an isomorphism $f \colon (X,D) \simeq (\mathcal{X}_s,\mathcal{D}_s)$.

A set of pairs $\mathcal{C}$ is called {\em log bounded} if
$$\{(X,\Supp B)\mid(X,B)\in\mathcal{C}\}$$
is a bounded set of couples.

A set of algebraic varieties $\mathcal{C}$ is called {\em bounded} if 
$$\{(X,0)\mid X\in\mathcal{C}\}$$
is a bounded set of couples. 

A set of algebraic germs $\mathcal{C}$ is called {\em analytically bounded} if there exists a projective morphism $\mathcal{X}\rightarrow S$ of varieties of finite type, such that for every element $(x\in X)\in\mathcal{C}$, there is a closed point $s\in S$ and a closed point $x_s \in \mathcal{X}_s$ such that 
$$\hat{\mathcal{O}}_{X,x}\cong\hat{\mathcal{O}}_{X_s,x_s}.$$

A set of pairs $\mathcal{C}$ is called {\em log bounded up to deformation} if there exists a log bounded family $\mathcal{C}_0$ such that for every element $(X,B)\in \mathcal{C}$ there exists a deformation $(\mathcal{X},\mathcal{B}) \rightarrow \mathbb{A}^1$ and $t\in \mathbb{A}^1$ such that
$(\mathcal{X}_t,\mathcal{B}_t) \simeq (X,B)$
and $(\mathcal{X}_0,\mathcal{B}_0)\in \mathcal{C}_0$.
}
\end{definition}

\begin{definition}\label{def:log-fano}{\em 
A pair $(E,\Delta_E)$ is called {\em log Fano} if $(E,\Delta_E)$ is klt and $-(K_E+\Delta_E)$ is an ample $\qq$-divisor.}  A variety $E$ is called {\em Fano} if $(E,0)$ is log Fano.
\end{definition}

We recall the boundedness of $\epsilon$-lc log Fano varieties, which was known as the BAB conjecture and is proved by Birkar:

\begin{theorem}{\rm(\cite[Theorem 1.1]{Bir16b})}\label{thm:bab}
Let $n$ be a positive integer and $\epsilon$ be a positive real number. Then 
$$\{X \mid \dim X=n, (X,\Delta)\ \textit{is}\ \epsilon\textit{-lc log Fano for some}\ \Delta\}$$
is bounded.
\end{theorem}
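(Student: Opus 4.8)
The plan is to follow Birkar's strategy, which factors the proof into three geometric inputs --- boundedness of complements, effective birationality, and an upper bound on the anti-canonical volume --- and then a more formal step deducing boundedness of the whole family from these.

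\emph{Step 1: boundedness of complements.} First I would prove that there is a positive integer $N=N(n,\epsilon)$ such that every $n$-dimensional $\epsilon$-lc Fano $X$ admits an $N$-complement: a divisor $B^{+}\geq 0$ with $(X,B^{+})$ lc and $N(K_X+B^{+})\sim 0$. This is established by induction on $n$, using adjunction to a divisor in a small multiple of $-K_X$ (on a suitable birational model) and lifting complements from the boundary and from the base of a Mori fibration to the total space. The mechanism requires working in the category of generalized pairs (pairs with an extra b-nef part), and the $\epsilon$-lc hypothesis together with the ACC for log canonical thresholds and the global ACC keeps the coefficients produced by adjunction inside a fixed DCC set, so the induction closes.

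\emph{Step 2: effective birationality and bounded volume.} Using an $N$-complement and Nadel-type vanishing, I would show there is $m=m(n,\epsilon)$ so that $|-mK_X|$ defines a birational map onto its image. The argument is the non-klt-center technique of Hacon--McKernan--Xu: perturb $-K_X$ to create an isolated non-klt center through a general point, cut it down by induction on dimension, and lift sections; the $\epsilon$-lc condition forces the relevant log canonical thresholds to stay bounded below, which is precisely what prevents the process from requiring an unbounded multiple. The same circle of ideas then yields an \emph{upper} bound $\vol(-K_X)\leq v(n,\epsilon)$: if the volume were large, a general member of a small multiple of $-K_X$ would produce a divisor over $X$ with log discrepancy $<\epsilon$, contradicting the hypothesis. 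I expect this volume bound to be the main obstacle, since it is where the $\epsilon$-lc assumption is genuinely used and where the induction on dimension is most delicate.

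\emph{Step 3: from bounded invariants to a bounded family.} With $m$ and $v$ now fixed, one runs a minimal model program (or passes to the anti-canonical model) to reach a model $X'$ birational to $X$ on which $-K$ is nef and $|-mK|$ is base point free; the resulting morphism embeds $X'$ into a fixed projective space with image of degree bounded in terms of $m$ and $v$, hence into a bounded family. One then checks, using the bounded complement $B^{+}$ to control singularities and the birational contraction $X'\dashrightarrow X$, that the varieties $X$ themselves form a bounded family --- the standard boundedness argument that bounded volume together with bounded Cartier index forces boundedness. This completes the proof.
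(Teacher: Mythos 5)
The paper does not prove this theorem; it is quoted directly from Birkar's work with the citation \cite[Theorem 1.1]{Bir16b}, so there is no internal argument in this paper to compare your attempt against. What you have written is a high-level reconstruction of Birkar's own two-paper proof, and as such it is largely accurate in outline: boundedness of $n$-complements (via generalized pairs, adjunction, and the ACC inputs), effective birationality of $|-mK_X|$, and the upper bound on $\vol(-K_X)$ are indeed the engine, and they come essentially from the first paper \cite{Bir16a}.

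One substantive point is worth flagging in your Step 3. From bounded complements, effective birationality, and bounded anti-canonical volume one obtains only \emph{log birational} boundedness of the varieties $X$ (together with a suitable boundary). Passing from that to boundedness of $X$ itself is exactly the new content of \cite{Bir16b}: the key additional theorem there is a uniform lower bound on log canonical thresholds of $\mathcal{R}$-linear systems on $\epsilon$-lc varieties of Fano type, which is what lets one control the singularities introduced when comparing $X$ with the bounded birational model and so conclude that $X$ itself lies in a bounded family. Your phrase ``bounded volume together with bounded Cartier index forces boundedness'' inverts the logic: the Cartier index is not a priori bounded, and establishing the necessary singularity control is precisely the hard step, not a routine corollary. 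With that correction your sketch is a fair summary of Birkar's strategy, which is of course what the cited source proves.
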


\begin{proposition}\label{bounding-log-pairs-BAB}
Let $n$ be a positive integer, $\epsilon$ and $c$ be two positive real numbers. Let $\mathcal{D}$ be the set of $n$-dimensional pairs $(E,\Delta_E)$, such that
\begin{itemize}
    \item $E$ is $\epsilon$-lc log Fano,
    \item $-(K_E+\Delta_E)$ is pseudoeffective, and 
    \item the non-zero coefficients of $\Delta_E$ are at least $c$.
\end{itemize}
Then $\mathcal{D}$ is a log bounded.
\end{proposition}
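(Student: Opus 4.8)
The plan is to reduce Proposition~\ref{bounding-log-pairs-BAB} to Birkar's boundedness of $\epsilon$-lc Fano varieties (Theorem~\ref{thm:bab}) together with a standard argument bounding the coefficients and the number of components of the boundary $\Delta_E$. First I would apply Theorem~\ref{thm:bab} to the set $\{E \mid (E,\Delta_E)\in\mathcal{D}\}$: since each such $E$ is $\epsilon$-lc log Fano of dimension $n$, the underlying varieties $E$ form a bounded family. So there is a projective morphism $\mathcal{E}\to S$ with $S$ of finite type such that every $E$ appears as a fiber $\mathcal{E}_s$; after stratifying $S$ and passing to a finite cover we may assume $S$ is smooth and there is a relatively very ample $\mathcal{H}$ on $\mathcal{E}/S$ with bounded degree $d = \mathcal{H}_s^n$ on every fiber.

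The next step is to bound the degree of $\Delta_E$ with respect to such a polarization. Here I would use the pseudoeffectivity hypothesis $-(K_E+\Delta_E)\ \text{is pseudoeffective}$ together with the fact that $-K_E$ is ample: writing $H=\mathcal{H}_s$, we have $\Delta_E\cdot H^{n-1}\le (-K_E)\cdot H^{n-1}$, and the right-hand side is bounded in the bounded family (it is bounded above after possibly re-choosing $\mathcal{H}$, e.g. using that $-K_E$ itself, or a bounded multiple of it minus an ample class, stays in a bounded region of the relative N\'eron--Severi space; alternatively one bounds $(-K_E)\cdot H^{n-1}$ directly since both are divisor classes on fibers of a bounded family, using Noetherian induction on $S$). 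Since the non-zero coefficients of $\Delta_E$ are $\ge c$, the bound on $\Delta_E\cdot H^{n-1}$ translates into a bound on $\sum_i (\operatorname{mult}_{E_i}\Delta_E)(E_i\cdot H^{n-1})$, hence a bound on the number of components of $\Delta_E$ and on each $E_i\cdot H^{n-1}$. This is exactly the hypothesis needed to conclude that $\{(E,\operatorname{Supp}\Delta_E)\mid (E,\Delta_E)\in\mathcal{D}\}$ is a bounded set of couples: bounded-degree subvarieties of the fibers of a bounded family, with boundedly many components, live in a bounded family (one can realize $\operatorname{Supp}\Delta_E$ inside a relative Hilbert scheme of bounded Hilbert polynomial, which is of finite type). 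Therefore $\mathcal{D}$ is log bounded.

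I expect the main obstacle to be making precise the claim that $(-K_E)\cdot H^{n-1}$ is bounded across the whole family; although $E$ runs in a bounded family, the anticanonical polarization is not literally the fixed relative polarization $\mathcal{H}$, so one must either (i) argue by Noetherian induction on $S$ — on each stratum $-K_{\mathcal{E}}$ restricts to a flat family of divisors with bounded intersection against $\mathcal{H}^{n-1}$ — or (ii) invoke that after refining $S$ one may assume $-K_{\mathcal{E}/S}$ is relatively ample and itself serves as (a multiple of) the polarization. Either route is routine but needs care; everything else is a direct packaging of Theorem~\ref{thm:bab} with the coefficient lower bound $c$ and standard Hilbert scheme boundedness. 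A clean way to phrase the whole argument is: bounded family of $E$'s $\Rightarrow$ bounded relative polarization $\Rightarrow$ bounded anticanonical degree $\Rightarrow$ (via $c$) bounded degree and number of components of $\operatorname{Supp}\Delta_E$ $\Rightarrow$ $\operatorname{Supp}\Delta_E$ lies in a finite-type relative Hilbert scheme $\Rightarrow$ log boundedness of $\mathcal{D}$.
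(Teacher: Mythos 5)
Your argument matches the paper's almost line for line: both invoke Theorem~\ref{thm:bab} to bound the underlying $E$'s, both use pseudoeffectivity of $-(K_E+\Delta_E)$ and ampleness of $-K_E$ to bound $\Delta_E\cdot A^{n-1}$, and both divide by the coefficient lower bound $c$ to bound $\Supp(\Delta_E)\cdot A^{n-1}$. The only cosmetic difference is that the paper closes by citing \cite[Lemma 3.7]{Ale94} for the final log-boundedness step, whereas you re-derive it via Hilbert schemes, and the paper simply asserts the existence of a very ample $A_E$ with $-K_E\cdot A_E^{n-1}\le C$ from boundedness rather than spelling out the Noetherian-induction justification you flag as the delicate point.
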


\begin{proof}
By Theorem \ref{thm:bab}, for any $(E,\Delta_E)\in\mathcal{D}$, $E$ is bounded. Thus there exists a positive real number $C$, such that for any $(E,\Delta_E)\in\mathcal{D}$, there exists a very ample divisor $A_E$ on $E$, such that $-K_E\cdot A^{n-1}\leq C$. By our assumptions, we have
\[
\Supp(\Delta_E)\cdot A^{n-1} 
\leq
\frac{\Delta_E \cdot A^{n-1}}{c}
\leq
\frac{ -K_E \cdot A^{n-1}}{c} 
\leq
\frac{C}{c}.
\]
By~\cite[Lemma 3.7]{Ale94}, $(E,\Delta_E)$ is log bounded.
\end{proof}

\subsection{Boundedness of Cartier indices}

In this subsection we recall and prove several results on boundedness of Cartier indices.

\begin{lemma}{\rm (\cite[Proposition 6.2]{PS01})}\label{lem:extendcomplement}
	Let $n,m$ be two positive integers, $(Y,E)$ a pair of dimension $n$, such that $E$ is a reduced divisor and $(Y,E)$ is plt near $E$. Suppose $Y\rightarrow X$ is a contraction and $x\in X$ a closed point. Assume that
	\begin{enumerate}
		\item $-(K_Y+E)$ is big and nef over $X$, and
        \item there is an $m$-complement $K_E+{\rm Diff}_E(0)^{+}$ of the $\mathbb R$-divisor $K_E+{\rm Diff}_E(0)$ over $z$.
		\end{enumerate}
		
	Then there is an $m$-complement $K_Y+E+B$ of $K_Y+E$ over $z$ such that ${\rm Diff}_E(0)^{+}={\rm Diff}_E(B)$. 
	
	In particular, if $Y=X$ and and $m(K_Y+E)|_{E}$ is Cartier near $x$, then $m(K_Y+E)$ is Cartier near $x$. Furthermore, by Noetherian property, if $m(K_Y+E)|_{E}$ is Cartier, then $m(K_Y+E)$ is Cartier near $E$.
\end{lemma}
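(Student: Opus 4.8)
The plan is to produce the complement $K_Y+E+B$ on $Y$ by lifting, through the relative cohomology of a restriction exact sequence, the section of a divisorial sheaf on $E$ that witnesses the given $m$-complement $K_E+{\rm Diff}_E(0)^{+}$. Write $f\colon Y\to X$ for the contraction and work over (a neighbourhood of) the base point $z$.

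First I would record the bookkeeping on $E$. By adjunction $(K_Y+E)|_E=K_E+{\rm Diff}_E(0)$. Since $K_E+{\rm Diff}_E(0)^{+}$ is an $m$-complement, ${\rm Diff}_E(0)^{+}\ge {\rm Diff}_E(0)$, and $G:=m\bigl({\rm Diff}_E(0)^{+}-{\rm Diff}_E(0)\bigr)$ is an effective integral Weil divisor on $E$ whose associated section generates $\mathcal{O}_E\bigl(-m(K_Y+E)|_E\bigr)\cong\mathcal{O}_E(G)$ over $z$; call this section $s_E$. Because $(Y,E)$ is plt near $E$, $E$ is normal, so after shrinking around $E$ there is a short exact sequence of divisorial sheaves
$$0\to \mathcal{O}_Y\bigl(-m(K_Y+E)-E\bigr)\to \mathcal{O}_Y\bigl(-m(K_Y+E)\bigr)\to \mathcal{O}_E\bigl(-m(K_Y+E)|_E\bigr)\to 0;$$
identifying the quotient with the divisorial sheaf of the different is the one point that uses the plt hypothesis, and is handled by the rounding arguments of \cite{PS01,Kol13}.

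The crux is the vanishing $R^1f_*\,\mathcal{O}_Y\bigl(-m(K_Y+E)-E\bigr)=0$ near $z$. One rewrites
$$-m(K_Y+E)-E=K_Y-(m+1)(K_Y+E),$$
notes that $-(K_Y+E)$, and hence $-(m+1)(K_Y+E)$, is nef and big over $X$, and applies relative Kawamata--Viehweg vanishing in its form for $\qq$-Cartier Weil divisors on klt pairs (passing, if necessary, to a small $\qq$-factorial modification or perturbing the boundary). This is the step I expect to be the main obstacle: the twisting divisor $-m(K_Y+E)-E$ is only $\qq$-Cartier, $Y$ need not be $\qq$-Gorenstein a priori, and one must keep any auxiliary boundary small enough to stay nef and big over $X$---this is exactly the content of \cite[Proposition 6.2]{PS01}. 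Granting it, applying $f_*$ to the sequence makes the restriction map on sections surjective near $z$, so $s_E$ lifts to a section $s$ of $\mathcal{O}_Y\bigl(-m(K_Y+E)\bigr)$ over a neighbourhood of $z$.

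Finally let $B\ge 0$ be the $\qq$-divisor with $mB$ equal to the divisor of zeros of $s$, so that $mB$ is integral and $m(K_Y+E+B)\sim 0$ over $z$. Since $s|_E=s_E\neq 0$, $E\not\subset\Supp B$ and $(mB)|_E=G$, hence ${\rm Diff}_E(B)={\rm Diff}_E(0)+B|_E={\rm Diff}_E(0)+\tfrac1m G={\rm Diff}_E(0)^{+}$; as $(E,{\rm Diff}_E(B))$ is lc, inversion of adjunction gives that $(Y,E+B)$ is lc near $E$, so $K_Y+E+B$ is the desired $m$-complement. For the last assertions take $Y=X$: then $f=\id$, the restriction sequence is already right exact, and if $m(K_X+E)|_E$ is Cartier near $x$ it is linearly trivial there, so $K_E+{\rm Diff}_E(0)$ is itself an $m$-complement and the construction yields $B\ge 0$ with ${\rm Diff}_E(B)={\rm Diff}_E(0)$, i.e. $(mB)|_E=0$; a dimension count then shows $\Supp(mB)$ is disjoint from $E$ near $x$, so $s$ is nowhere vanishing along $E$ and trivialises $\mathcal{O}_X(-m(K_X+E))$ there, i.e. $m(K_X+E)$ is Cartier near $x$. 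Running this at each point of $E$, the non-Cartier locus of $m(K_X+E)$ is closed and disjoint from $E$, so by the Noetherian property $m(K_X+E)$ is Cartier in a neighbourhood of $E$.
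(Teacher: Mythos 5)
The paper does not prove this lemma --- it imports it directly as \cite[Proposition 6.2]{PS01} --- so you are reconstructing an external citation rather than the paper's own argument. The core of your sketch is the standard proof of the lifting-of-complements result: the restriction short exact sequence of divisorial sheaves for the plt pair $(Y,E)$, the relative Kawamata--Viehweg vanishing of $R^1f_*\mathcal{O}_Y(-m(K_Y+E)-E)$ obtained from the nefness and bigness of $-(m+1)(K_Y+E)$ over $X$, and the lift of the section on $E$ that exhibits the $m$-complement of $K_E+\mathrm{Diff}_E(0)$. You also correctly flag the two places where genuine care is needed: exactness of the reflexive restriction sequence, which uses the plt hypothesis, and vanishing for $\mathbb Q$-Cartier Weil divisors on a possibly non-$\mathbb Q$-Gorenstein $Y$.

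The gap is at the end, in the ``In particular'' clause. You assert that from $\mathrm{Diff}_E(B)=\mathrm{Diff}_E(0)$, i.e.\ $(mB)|_E=0$, ``a dimension count'' yields $\Supp(mB)\cap E=\emptyset$ near $x$, hence $s$ nowhere vanishing along $E$. The dimension count only gives that $\Supp(mB)\cap E$ has codimension at least $2$ in $E$; an effective Weil divisor not containing $E$ can perfectly well meet $E$ along such a locus --- in particular through $x$ itself once $\dim E\geq 2$ --- without contributing anything to the different. In that case $s$ may vanish at $x$, and Nakayama does not rescue the argument: the fact that $s$ maps to a generator of the invertible stalk $\mathcal{O}_E(-m(K_X+E)|_E)_x$ only controls its image in a \emph{quotient} of the fiber of $\mathcal{O}_X(-m(K_X+E))$ at $x$, and the dimension of that fiber is exactly what one is trying to pin down (it is $1$ if and only if the sheaf is already invertible). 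The passage from ``$m(K_X+E)|_E$ Cartier near $x$'' to ``$m(K_X+E)$ Cartier near $x$'' is a real statement about restrictions of reflexive rank-one sheaves along a plt divisor, in the spirit of \cite[\S 4]{Kol13}, and this is precisely the part you should be deferring more explicitly to the cited source rather than resolving by a pigeonhole on supports.
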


\begin{lemma}\label{lem:cartier-index-ky+e}
Let $n$ be a positive integer, $\epsilon,\delta$ two positive real numbers. Then there exists a positive integer $m$ depending only on $n,\epsilon$ and $\delta$ satisfying the following.
Assume $\pi \colon Y\rightarrow X$ is a $\delta$-plt blow-up of an $n$-dimensional $\epsilon$-lc singularity $x\in X$ that extracts a unique exceptional divisor $E$. Then $m(K_Y+E), mE$ are Cartier near $E$ and $mK_X$ is Cartier near $x$.
\end{lemma}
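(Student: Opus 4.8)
The plan is to reduce everything to boundedness of the Cartier index of $K_E+\mathrm{Diff}_E(0)$ near the point $x$, and then invoke Lemma~\ref{lem:extendcomplement} to lift Cartierness from $E$ to $Y$. First I would observe that $(Y,E)$ together with the contraction $\pi\colon Y\to X$ satisfies the hypotheses of Lemma~\ref{lem:extendcomplement}: by definition of a $\delta$-plt blow-up, $(Y,E)$ is $\delta$-plt (hence plt) near $E$, $-E$ is ample over $X$, and since $X$ is $\epsilon$-lc klt we have $\mathrm{Diff}_E(0)\geq 0$ with $(E,\mathrm{Diff}_E(0))$ a $\delta$-plt hence $\epsilon'$-lc pair for a suitable $\epsilon'=\epsilon'(\epsilon,\delta)$. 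By adjunction $K_Y+E$ is $\pi$-anti-ample, so $-(K_Y+E)|_E=-(K_E+\mathrm{Diff}_E(0))$ is ample; thus $(E,\mathrm{Diff}_E(0))$ is an $(n-1)$-dimensional $\epsilon'$-lc log Fano pair, and moreover the coefficients of $\mathrm{Diff}_E(0)$ lie in the standard-type set $\{1-\tfrac{1}{r}+\tfrac{k}{r}\}$ so in particular are $\geq c$ for some $c=c(\epsilon)$.

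The key input is then the existence of an $m$-complement of $K_E+\mathrm{Diff}_E(0)$ near $x$ (equivalently over $z=\pi(E)$-point) with $m$ bounded. Here I would appeal to Birkar's theory of complements for log Fano pairs: since $(E,\mathrm{Diff}_E(0))$ ranges over a bounded family (Proposition~\ref{bounding-log-pairs-BAB} applies, as $-(K_E+\mathrm{Diff}_E(0))$ is ample hence pseudoeffective and the coefficients are $\geq c$), the set of such pairs is log bounded, and by boundedness of complements for log bounded log Fano pairs there is a single $m=m(n,\epsilon,\delta)$ such that each admits a (monotonic) $m$-complement; restricting this complement to a neighborhood of the image point gives the required local $m$-complement. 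Feeding this into Lemma~\ref{lem:extendcomplement} (the ``in particular'' with $Y=X$ replaced by the contraction version, then Noetherian-ness) produces an $m$-complement $K_Y+E+B$ of $K_Y+E$ over $z$ with $m(K_Y+E)$ Cartier near $E$, after possibly replacing $m$ by a bounded multiple so that also $m(K_E+\mathrm{Diff}_E(0))$ is Cartier.

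Once $m(K_Y+E)$ is Cartier near $E$, I would deduce the remaining two statements. For $mE$: over $X$ we have $K_Y=\pi^*K_X-aE$ for the log discrepancy-type coefficient, but more directly, $m K_Y = \pi^*(mK_X) + (\text{multiple of }E)$; since $mK_X$ being Cartier near $x$ and $m(K_Y+E)$ being Cartier near $E$ together with $\pi$-ampleness of $-E$ will force, via the negativity lemma and the fact that $E$ is the unique exceptional divisor, that $mE$ is Cartier near $E$ — concretely, $mK_Y$ and $m(K_Y+E)$ are both Cartier near $E$ so their difference $mE$ is Cartier near $E$, provided $mK_Y$ is, which follows once $mK_X$ is Cartier near $x$ by pullback. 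So the real content is: (i) bound $m$ so $m(K_Y+E)$ is Cartier near $E$ (done above), and (ii) bound $m$ so $mK_X$ is Cartier near $x$. For (ii), I would use that $K_X$ near $x$ is $\mathbb{Q}$-Cartier by hypothesis ($X$ is $\mathbb{Q}$-Gorenstein, or at least klt so $\mathbb{Q}$-Gorenstein), and that its index near $x$ equals the index of $K_Y$ near $E$ modulo the Cartier index of $E$; bounding the log discrepancy $a_E(X,0)=1+\mathrm{coeff}$ away from being a wild denominator comes from $\epsilon$-lc-ness combined with the $m$-complement just constructed, since $m\cdot a_E(X,0)\in\mathbb{Z}$ once $m(K_Y+E)$ is an $m$-complement pulled back appropriately.

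The main obstacle I expect is step (ii) — genuinely bounding the Cartier index of $K_X$ near $x$, i.e. controlling the denominator of the single discrepancy $a_E(X,0)$. The clean way around this is: the $m$-complement $K_Y+E+B$ over $z$ pushes forward to an $m$-complement $K_X+B_X$ of $K_X$ over $x$ (since $E$ is contracted and $(X,B_X)$ is lc with $m(K_X+B_X)$ Cartier near $x$); but $X$ is $\epsilon$-lc hence klt, and in a sufficiently small neighborhood of $x$ where $X$ is klt with no boundary, any $m$-complement that is plt forces $B_X=0$ near $x$ when... — more robustly, one notes $m(K_X+B_X)$ Cartier near $x$ and $m B_X$ is an integral Weil divisor supported away from $x$ after shrinking (as $x$ is an isolated non-klt-free point? no) — so in fact I would instead argue directly that Cartier index of $K_X$ near $x$ divides $m\cdot(\text{index of }E)$, where the index of $E$ near $x$ is itself bounded by the same complement machinery, closing the loop. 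This linear-algebra-over-$\mathrm{Cl}$ bookkeeping near $x$, rather than any new geometric input, is where the care is needed.
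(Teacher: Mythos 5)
Your argument for the first conclusion, that $m(K_Y+E)$ is Cartier near $E$ for bounded $m$, is essentially the paper's: both proofs observe that $(E,\operatorname{Diff}_E(0))$ is a $\delta$-lc log Fano pair with coefficients of the form $1-\tfrac{1}{l}$ (hence in a finite set), apply Proposition~\ref{bounding-log-pairs-BAB} to get log boundedness, conclude boundedness of the Cartier index of $K_E+\operatorname{Diff}_E(0)$, and lift via Lemma~\ref{lem:extendcomplement}. Your detour through Birkar's boundedness of complements is strictly unnecessary --- once the pairs $(E,\operatorname{Diff}_E(0))$ lie in a log bounded family with coefficients in a finite set, one directly reads off a uniform $m_0$ making $m_0(K_E+\operatorname{Diff}_E(0))$ Cartier, and then invokes the ``Furthermore'' clause of Lemma~\ref{lem:extendcomplement}; but this detour is not an error.

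The genuine gap is in the second half, and you flag it yourself: bounding the Cartier index of $K_X$ near $x$ (equivalently, the Cartier index of $E$ near $E$). Your proposed routes are circular. To get $mK_Y$ Cartier from $mK_X$ Cartier you need $m(1-a_E(X,0))E$ Cartier, i.e.\ you need the index of $E$; to bound the index of $E$ via the complement pushforward you need to know the index of $K_X$ near $x$; and the ``index of $K_X$ divides $m\cdot(\text{index of }E)$'' observation just restates the loop. The paper breaks the circle with two external black-box inputs you do not have: boundedness of the Cartier index of $K_X$ near an $(\epsilon,\delta)$-lc singularity (cited as \cite[Theorem 2]{Mor18b} or \cite[Corollary 1.10]{HLS19}) giving an $m_1$ with $m_1K_X$ Cartier, and finiteness of the possible values of $a_E(X,0)$ (cited as \cite[Proposition 4.2]{HLS19}). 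With these, $m_0m_1(K_Y+E)$ and $m_0m_1\pi^*K_X=m_0m_1\bigl(K_Y+(1-a_E(X,0))E\bigr)$ are both Cartier, their difference $m_0m_1\,a_E(X,0)E$ is Cartier, and since $a_E(X,0)$ ranges over a finite set, a single bounded multiple $m$ makes $mE$ Cartier. Without some such input controlling either the index of $K_X$ or the set of log discrepancies $a_E(X,0)$, the argument does not close; this is a missing ingredient, not merely a bookkeeping issue.
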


\begin{proof}
Since all the coefficients of ${\rm Diff}_{E}(0)$ are of the form $\frac{l-1}{l}$ for some positive integer $l$, and since $(E,{\rm Diff}_{E}(0))$ is $\delta$-klt by adjunction formula, all the coefficients of ${\rm Diff}_{E}(0)$ are contained in a finite set of rational numbers By Proposition \ref{bounding-log-pairs-BAB}, $(E,{\rm Diff}_{E}(0))$ is log bounded, hence there exists a positive integer $m_0$ such that $m_0(K_E+{\rm Diff}_{E}(0))$ is Cartier. By Lemma \ref{lem:extendcomplement}, $m_0(K_Y+E)$ is Cartier near $E$.

By \cite[Theorem 2]{Mor18b} or \cite[Corollary 1.10]{HLS19}, there exists a positive integer $m_1$ such that $m_1K_X$ is Cartier near $x$. In particular, $m_0m_1(K_Y+E)$ and $m_0m_1(K_Y+(1-a_E(X,0))E)$ are both Cartier near $E$. Thus $m_0m_1a_E(X,0)E$ is Cartier. Since $x\in X$ is $\epsilon$-lc, $a_E(X,0)\geq\epsilon$. By \cite[Proposition 4.2]{HLS19}, $a_E(X,0)$ is contained in a finite set, thus there exists an integer $m$ such that $m_0m_1|m$ and $mE$ is Cartier. In particular, $m(K_Y+E)$ is Cartier near $E$ and $mK_X$ is Cartier near $x$.
\end{proof}

\subsection{Cone singularities}\label{subsec:cone}
In this subsection, we recall the definition of cone singularities, and prove some basic properties regarding isotropies and discrepancies of cone singularities.

\begin{definition}{\em 
A {\em cone singularity} is a normal affine algebraic variety $X$ with an effective $\mathbb{K}$-action such that
$X$ has a unique fixed closed point $x\in X$, called the {\em vertex of the action}, and any orbit closure of this $\mathbb K$-action contains $x$.
A pair $(X,B)$ is called a {\em cone singularity} at $x\in (X,B)$ if $x\in X$ is a cone singularity and $B$ is a $\kk$-invariant $\mathbb R$-divisor.
}
\end{definition}

\begin{definition}{\em 
Let $x\in X$ be a cone singularity. For any closed point $y\not=x$ contained in $X$, it is well known that the {\em stabilizer} of the torus action on the orbit $\kk^*_y$ of $y$ 
is $\mu_d \subset \kk^*_y$, i.e. the subgroup of $d$-th roots of unit, for some positive integer $d$.
In such case, we say that $\kk$ acts with {\em isotropy} at $y$ equal to $d$. 
If $d=1$, then we say that $\kk$ acts with trivial isotropies at the point $y$.
We say that $x\in X$ has {\em isotropies bounded by N} if for any point $y\not=x$ contained in $X$, the isotropy of the $\mathbb K$-action at $y$ is at most $N$.
}
\end{definition}

The following theorem is a standard theorem of the theory of $\mathbb{T}$-varieties (see, e.g.~\cite{AH05,AHS08,AIPSV12}).

\begin{theorem}\label{demazure-isomorphism-with-boundary}
Let $x\in (X,B)$ be a cone singularity.
Then there exists a projective variety $E$ and an ample $\qq$-Cartier $\mathbb Q$-divisor $D_E$ on $E$ such that
\[
X\simeq {\rm Spec}\left( \bigoplus_{k \geq 0} H^0(E, kD_E) \right),
\]
and under this isomorphism $x\in X$ corresponds to the maximal ideal
\[
m_x := \bigoplus_{k>0} H^0(E,kD_E).
\]
Moreover, there exists a a good quotient $\rho \colon X\setminus\{x\} \rightarrow E$ (in the sense of~\cite[Definition 2.3.1]{ADHL15}) for the torus action 
and an effective $\mathbb R$-divisor $B_E$ on $E$ such that $B$ is the closure of $\rho^*(B_E)$.
\end{theorem}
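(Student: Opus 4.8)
The plan is to reduce everything to the classical Demazure construction for affine varieties with an effective $\mathbb K^*$-action and then deal with the divisor $B$ separately. First I would observe that, since $x\in X$ is a cone singularity, the coordinate ring $R := H^0(X,\mathcal O_X)$ is a finitely generated normal $\mathbb K$-algebra carrying a grading $R = \bigoplus_{k\in\mathbb Z} R_k$ induced by the $\mathbb K^*$-action. The hypothesis that $x$ is the unique fixed point and that every orbit closure contains $x$ forces the grading to be concentrated in nonnegative degrees, i.e. $R_k = 0$ for $k<0$ and $R_0 = \mathbb K$: indeed if there were a nonconstant element of degree $0$ it would cut out a positive-dimensional fixed locus, and a nonzero element of negative degree together with one of positive degree would produce an orbit whose closure is a torus not containing $x$. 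This is exactly the normality-plus-positivity setup of Demazure/Pinkham: such an $R$ can be written as $R = \bigoplus_{k\geq 0} H^0(E, kD_E)$ where $E = \operatorname{Proj} R$ is a normal projective variety and $D_E$ is an ample $\mathbb Q$-Cartier $\mathbb Q$-divisor (the Seifert $\mathbb Q$-divisor of the action); see \cite{Dem88} and \cite{Pin77}. Under this identification the vertex $x$ corresponds to the irrelevant ideal $m_x = \bigoplus_{k>0} H^0(E,kD_E)$, and the good quotient $\rho\colon X\setminus\{x\}\to E$ is the canonical map to $\operatorname{Proj} R$, which is a good quotient for the $\mathbb K^*$-action by standard GIT.

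Next I would handle the boundary divisor $B$. Since $B$ is $\mathbb K^*$-invariant, each prime component of $B$ is $\mathbb K^*$-invariant, hence is the closure of a $\mathbb K^*$-invariant prime divisor on $X\setminus\{x\}$; applying the good quotient $\rho$, each such component is the closure of $\rho^{*}$ of a prime divisor on $E$ (using that $\rho$ is equidimensional with one-dimensional fibers away from finitely many points, so that $\mathbb K^*$-invariant divisors on $X\setminus\{x\}$ correspond bijectively to divisors on $E$). Collecting these with the original coefficients defines an effective $\mathbb R$-divisor $B_E$ on $E$ with the property that $B$ equals the closure of $\rho^{*}(B_E)$, as required. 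One should note that there is no horizontal component: a $\mathbb K^*$-invariant divisor cannot dominate $E$ with $\mathbb K^*$-action on the generic fiber being nontrivial and still be a divisor, so all components are "vertical" and pull back from $E$.

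The main obstacle, and the step that requires the most care, is the first one: verifying that the Demazure presentation applies, i.e. that $E = \operatorname{Proj} R$ is actually a \emph{projective variety} (finite type, which follows from finite generation of $R$) and that the Seifert divisor $D_E$ is genuinely $\mathbb Q$-Cartier and ample rather than merely a Weil $\mathbb Q$-divisor class. Ampleness is automatic from $R$ being generated in positive degrees after a Veronese; $\mathbb Q$-Cartierness of $D_E$ is the subtle point and is where the normality of $X$ and the structure theory of $\mathbb T$-varieties in \cite{AH05,AHS08} enter — there $D_E$ arises as a polyhedral (here one-dimensional) divisor and its $\mathbb Q$-Cartier property is part of the general correspondence. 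I would cite that machinery rather than reprove it. The identification of $x$ with $m_x$ and the good-quotient claim are then formal consequences once the presentation is in place.
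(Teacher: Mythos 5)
Your proposal is correct and follows essentially the same route as the paper's proof: both reduce to Demazure's presentation of a normal positively-graded ring as $\bigoplus_{k\geq 0}H^0(E,kD_E)$, both deduce that the grading is concentrated in nonnegative degrees (equivalently, that the weighted monoid is pointed) from the unique-fixed-point and orbit-closure conditions, and both take $B_E$ to be the image of $B$ under the good quotient (the paper writes $B_E:=\rho_*(B)$, you build it component-by-component; these are the same divisor). The paper obtains the good quotient from \cite[Construction 1.6.13]{ADHL15} rather than directly from $\operatorname{Proj}$, but this is a cosmetic difference. Your extra paragraph spelling out why $\mathbb K^*$-invariant prime divisors on $X\setminus\{x\}$ all map to divisors on $E$ (none are sections, since the fixed locus is just $\{x\}$) is a useful amplification that the paper leaves implicit.
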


\begin{proof}
The isomorphism is proved in~\cite[3.5]{Dem88} for some variety $E$ and an ample $\qq$-divisor $D_E$ on $E$.
Since the action of $\kk$ on $X$ has a unique fixed point and every orbit closure contains $x\in X$, we conclude that the weighted monoid of the action is pointed, and hence it is isomorphic to $\mathbb{Z}_{\geq 0}$ as the torus action is one-dimensional.
In particular, $E$ is projective over ${\rm Spec}(\kk)$.
By~\cite[Construction 1.6.13]{ADHL15}, there is a good quotient $\rho 
\colon X \setminus \{ x\} \rightarrow E$.
Let $B_E:=\rho_*(B)$ and the proof is finished.
\end{proof}

\begin{notation}{\em 
Given a cone singularity $x\in X$, we denote $\widetilde{X}\rightarrow X$ the blow-up at the vertex, $F_{\widetilde{X}} \subset \widetilde{X}$ the exceptional divisor of $\widetilde{X}\rightarrow X$, and $\rho \colon \widetilde{X}\rightarrow E$ the induced good quotient. In Lemma~\ref{exceptional-vs-quotient}, we will see that $E$ is indeed isomorphic to $F_{\widetilde{X}}$ in the case of klt cone singularities.
}
\end{notation}

\begin{definition}{\em 
Let $x\in (X,B)$ be a $\qq$-Gorenstein klt cone singularity, $E$ is the Chow quotient of $x\in X$. 
Let $D_E$ and $B_E$ be as in Theorem~\ref{demazure-isomorphism-with-boundary}.
We may write $D_E = \sum_{Z} \frac{p_Z}{q_Z} Z$, where the sum runs over all prime divisors
$Z$ on $E$ and ${\rm gcd}(p_Z,q_Z)=1$. We may define the boundary divisor
\[
\Delta_{E}:= \sum_{Z\subset E} \left( 1- \frac{1}{q_Z} \right) Z.
\]
Since each $q_Z$ only depends on the cone singularity $x\in X$, $\Delta_{E}$ only depends on the cone singularity $x\in X$.

The pair $(E,\Delta_E+B_E)$ is called the {\em log Fano quotient} of the cone singularity $x\in (X,B)$,
while the triple $(E,D_E;B_E)$ is called the {\em associated triple} of the cone singularity.
Notice that the associated triple is well-defined modulo linear equivalence of $D_E$.

We also call $D_E$ the {\em $\qq$-polarization} of the cone singularity, and $B$ the {\em cone over the divisor} $B_E$.
}
\end{definition}

We need the following result on Cartier index of the $\mathbb Q$-polarization:

\begin{proposition}{\rm (\cite[Proposition 1.3.5.7]{ADHL15})}\label{isotropy-control}
Let $x\in X$ be a cone singularity, $y\in X\backslash\{x\}$ a closed point, and $\rho \colon X\setminus\{x\} \rightarrow E$ the induced good quotient. 
Then the isotropy at $y$ equals to the Cartier index of $D_E$ at $\rho(y)\in E$.
\end{proposition}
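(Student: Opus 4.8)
The plan is to reduce to a local computation at $z:=\rho(y)$ and to read off both invariants from the Demazure presentation of $X$. Write $R:=\bigoplus_{k\geq 0}H^0(E,kD_E)$, so that $X=\Spec R$ with the torus acting through the grading (here $kD_E$ means its integral part). Let $m$ be the Cartier index of $D_E$ at $z$, i.e. the least positive integer for which $mD_E$ is an integral, locally principal divisor at $z$. Since both the isotropy at $y$ and the Cartier index of $D_E$ at $z$ depend only on $\rho$ over a neighbourhood of $z$, we may work over a small affine $U\ni z$; on $U$ we then have $mD_E|_U=\divv(t^{-1})$ for a local generator $t$ of $\mathcal O_E(mD_E)$, which we regard as homogeneous of degree $m$, while $jD_E$ is not locally principal at $z$ for $0<j<m$. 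By Demazure's description of $X\setminus\{x\}\to E$ (see \cite{Dem88},~\cite[\S1.3]{ADHL15}), the scheme-theoretic fibre $\rho^{-1}(z)$ has as coordinate ring the graded $\kappa(z)$-algebra
\[
\bar R\;:=\;\bigoplus_{k\in\zz}\bigl(\mathcal O_E(kD_E)\otimes_{\mathcal O_{E,z}}\kappa(z)\bigr),
\]
and, using $\mathcal O_E(kD_E)\cong\mathcal O_E\bigl((k\bmod m)D_E\bigr)\otimes\mathcal O_E(mD_E)^{\otimes\lfloor k/m\rfloor}$ near $z$, we may write $\bar R=\bigoplus_{j=0}^{m-1}W_j[\bar t,\bar t^{-1}]$ with $W_j:=\mathcal O_E(jD_E)\otimes\kappa(z)$ sitting in degree $j$ and $\deg\bar t=m$.

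The decisive step is to show $\bar R_{\mathrm{red}}\cong\kappa(z)[\bar t,\bar t^{-1}]$. On the one hand $\bar t^{\,l}\neq 0$ for every $l\in\zz$, since $lmD_E$ is locally principal at $z$ and $\bar t^{\,l}$ is the image of the corresponding local generator of $\mathcal O_E(lmD_E)$. On the other hand, for $0<j<m$ every $w\in W_j$ is nilpotent: writing $w$ as the class of $u\in\mathcal O_E(jD_E)_z$, one has $w^m=\overline{u^m/t^{\,j}}\cdot\bar t^{\,j}$ with $u^m/t^{\,j}\in\mathcal O_{E,z}$, and $\divv(u^m/t^{\,j})=m\bigl(\divv(u)+jD_E\bigr)$ as germs at $z$; since $\divv(u)+jD_E\geq 0$ near $z$, if it did not pass through $z$ it would vanish near $z$, and then $jD_E=\divv(u^{-1})$ would be locally principal at $z$, contradicting $0<j<m$. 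Hence $u^m/t^{\,j}\in\mathfrak m_z$ and $w^m=0$. As the $W_j$ are finite-dimensional, the ideal they generate is nilpotent, and the quotient of $\bar R$ by it is the reduced ring $\kappa(z)[\bar t,\bar t^{-1}]$; therefore $\bar R_{\mathrm{red}}=\kappa(z)[\bar t,\bar t^{-1}]$.

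It follows that $\rho^{-1}(z)$, with its reduced structure, is $\Spec\kappa(z)[\bar t,\bar t^{-1}]\cong\kk^*$, on which the torus $\kk^*$ acts through $\lambda\cdot\bar t=\lambda^m\bar t$; consequently every closed point of $\rho^{-1}(z)$, and $y$ in particular, has stabilizer exactly $\mu_m\subset\kk^*$. Thus the isotropy at $y$ equals $m$, the Cartier index of $D_E$ at $\rho(y)=z$, as claimed.

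The only genuinely delicate point, which I would expect to be the main obstacle in a careful write-up, is the nilpotency of $W_j$ for $0<j<m$: this is precisely where the \emph{minimality} of $m$ in the definition of the Cartier index is used, and it is what collapses the ``intermediate'' graded pieces of the fibre ring. The other ingredients --- the Demazure presentation of $X\setminus\{x\}\to E$ and the behaviour of the reflexive sheaves $\mathcal O_E(kD_E)$ at the normal point $z$ --- are standard, although one should take some care with the precise description of $\rho^{-1}(U)$ when $U$ is not a basic affine open of $E$.
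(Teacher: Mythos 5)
Your proof is correct and follows the same localization strategy as the paper's: restrict to a neighbourhood of $z=\rho(y)$ and read the isotropy off the graded structure over $z$. It is, however, considerably more complete --- the paper's proof (which in any case cites~\cite{ADHL15} for the statement) merely asserts that the isotropy is the smallest degree $m$ for which some section fails to vanish at $y$, whereas your nilpotency argument for the pieces $W_j$ with $0<j<m$ actually establishes that $\bar R_{\mathrm{red}}\cong\kappa(z)[\bar t,\bar t^{-1}]$ with $\deg\bar t$ the Cartier index of $D_E$ at $z$, which is the substantive content of the proposition.
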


\begin{proof}
Replacing $E$ with a suitable affine neighborhood of $y$ we may assume $X$ is affine.
The character group of the orbit of $x$ is isomorphic to $\zz D_E / \zz mD_E$ where 
$m$ is the smaller positive integer such that $f(y)\neq 0$ for some $f\in \Gamma(E,mD_E)$.
Thus, the isotropy group at $x$ is isomorphic to $\zz_m$.
\end{proof}

\begin{proposition}\label{singularities-base}
Let $x\in (X,B)$ be a $\qq$-Gorenstein klt cone singularity with isotropies bounded by $N$ such that $B$ is a $\mathbb Q$-divisor.
Assume that $(X,B)$ is $\epsilon$-lc at $x$. 
Then $(E,\Delta_E+B_E)$ is $\epsilon/N$-log Fano.
\end{proposition}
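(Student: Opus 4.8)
I want to show that $(E,\Delta_E+B_E)$ is $\epsilon/N$-log Fano. It is log Fano by the definition of the log Fano quotient (it is klt and $-(K_E+\Delta_E+B_E)$ is ample). So the content is the discrepancy bound: every prime divisor over $E$ has log discrepancy $\ge \epsilon/N$ with respect to $(E,\Delta_E+B_E)$. The natural strategy is to relate log discrepancies on $E$ to log discrepancies on $X$ (or rather on a suitable blow-up), using the good quotient structure of the cone.

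Let me set up the comparison. Write $\widetilde X\to X$ for the blow-up of the vertex, $F=F_{\widetilde X}$ the exceptional divisor, and $\rho\colon\widetilde X\to E$ the induced good quotient. Since $x\in X$ is $\qq$-Gorenstein klt, $F$ is normal and $\rho|_F\colon F\to E$ identifies $F$ with $E$ (this is the content of the forthcoming Lemma~\ref{exceptional-vs-quotient}, which I am allowed to invoke). The cone structure means that $\widetilde X$ is (locally analytically, or via an étale chart) a Seifert-type $\kk^*$-bundle over $E$: away from the fixed locus, $\widetilde X\setminus F\to E$ is a $\kk^*$-bundle whose isotropy at a point $\rho(y)$ equals the Cartier index of $D_E$ there, which by Proposition~\ref{isotropy-control} is at most $N$. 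The key point is the adjunction/pullback formula: writing $K_{\widetilde X}+F+\widetilde B = \rho^*(K_E+\Delta_E+B_E) + (\text{something supported on } F)$, and more precisely for any prime divisor $Z$ on $E$ with preimage (reduced) $\rho^{-1}Z$, the ramification of $\rho$ along $\rho^{-1}Z$ has order exactly $q_Z\le N$, which is exactly why $\Delta_E$ carries the coefficient $1-1/q_Z$. So for a divisor $P$ over $E$ lying over $Z$ (or over a general point of $E$), its "pullback" divisor $\widetilde P$ over $\widetilde X$ satisfies
\[
a_{\widetilde P}(\widetilde X, F+\widetilde B) = q \cdot a_P(E,\Delta_E+B_E)
\]
for the appropriate ramification index $q\le N$ — the factor $q$ comes from both the multiplicity of $\rho^*Z$ and the discrepancy of the cyclic quotient. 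Hence $a_P(E,\Delta_E+B_E) = \frac{1}{q}\,a_{\widetilde P}(\widetilde X,F+\widetilde B) \ge \frac{1}{N}\,a_{\widetilde P}(\widetilde X,F+\widetilde B)$.

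It remains to bound $a_{\widetilde P}(\widetilde X, F+\widetilde B)$ from below by $\epsilon$. Here $\widetilde B$ is the strict transform of $B$, and $F+\widetilde B$ is the boundary we should compare with $B$ on $X$. Since $\widetilde X\to X$ extracts only $F$ and $-F$ is relatively ample (this is the cone blow-up), and since $(X,B)$ is $\epsilon$-lc at $x$, I want to say $(\widetilde X, F+\widetilde B)$ is $\epsilon$-lc near $F$. But note $K_{\widetilde X}+F+\widetilde B$ need not be the pullback of $K_X+B$ — rather $K_{\widetilde X}+ (1-a_F(X,B))F + \widetilde B = (\text{pullback})$, and $a_F(X,B)\ge\epsilon$. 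So replacing the coefficient of $F$ by the smaller value $1-a_F(X,B)$ only increases log discrepancies of divisors other than $F$, and $(X,B)$ being $\epsilon$-lc gives $a_{\widetilde P}(\widetilde X, (1-a_F(X,B))F+\widetilde B)=a_{\widetilde P}(X,B)\ge\epsilon$ for every $\widetilde P$. Increasing the coefficient of $F$ to $1$ can only decrease log discrepancies of divisors $\widetilde P$ whose center meets $F$; I need to handle this. Two ways out: either argue that the relevant $\widetilde P$ (those dominating divisors $P$ over $E$) can be chosen with center not contained in $F$ away from a fixed-locus correction, or — cleaner — use that a divisor $P$ over $E$ pulls back to a divisor $\widetilde P$ over $\widetilde X$ that I may take to be $\kk^*$-invariant and whose center on $\widetilde X$ maps onto the center of $P$ on $E$; compute $a_{\widetilde P}(\widetilde X,F+\widetilde B)$ directly via $K_{\widetilde X}+F+\widetilde B$ pulled back from $K_E+\Delta_E+B_E$, and compare that sheaf with the pullback from $X$. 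The discrepancy formula $K_{\widetilde X}+F = \rho^*(K_E+\Delta_E) + (\text{coeff})\,F$ lets one write things explicitly and reduce the whole inequality to $a_F(X,B)\ge\epsilon$ plus the $1/N$ ramification factor.

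\begin{remark}{\em
The main obstacle is bookkeeping the coefficient of $F$: one must compare the two natural boundaries $F+\widetilde B$ (adapted to the quotient $\rho$, needed for the clean adjunction to $(E,\Delta_E+B_E)$) and $(1-a_F(X,B))F+\widetilde B$ (the crepant pullback of $(X,B)$, needed to use $\epsilon$-lc-ness), and to see that the discrepancy of every relevant divisor $\widetilde P$ over $\widetilde X$ is controlled by $\epsilon$ after dividing by the ramification index $\le N$. Once the dictionary between divisors over $E$ and $\kk^*$-invariant divisors over $\widetilde X$ is in place (via Lemma~\ref{exceptional-vs-quotient} and Proposition~\ref{isotropy-control}), this is essentially the standard cyclic-cover/adjunction computation, and the estimate $a_P(E,\Delta_E+B_E)\ge a_F(X,B)/N\ge \epsilon/N$ follows.
}\end{remark}
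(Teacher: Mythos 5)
You correctly identify the crux — relating log discrepancies over $E$ to log discrepancies over the cone, with a ramification factor bounded by $N$ — but your route through the intermediate pair $(\widetilde X, F+\widetilde B)$ introduces exactly the difficulty your remark flags, and neither of your proposed fixes closes it. The pair $(\widetilde X, F+\widetilde B)$ has $a_F=0$, so it is only plt, not $\epsilon$-lc; consequently the bound $a_{\widetilde P}(\widetilde X, F+\widetilde B)\geq\epsilon$ that your argument needs is simply false for divisors $\widetilde P$ whose center on $\widetilde X$ meets $F$. And for any prime divisor $P$ over $E$ with center a proper subset $Z\subsetneq E$, the corresponding $\kk^*$-invariant divisor $\widetilde P$ over $\widetilde X$ has center $\overline{\rho^{-1}(Z)}$, which does meet the zero section $F$ (under the identification $F\cong E$, the intersection is $Z$). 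So your first ``way out'' (center not contained in $F$) is not available, and the second is left as an unexecuted sketch precisely at the point where the bookkeeping must be done.

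The paper sidesteps this entirely by never putting boundary coefficient $1$ on $F$. For a birational $f\colon W\to E$ it forms the relative spectrum $\widetilde X_W$ of $\bigoplus_{k\geq 0}\mathcal O_W(kf^*D_E)$ and invokes \cite[Proposition 3.14]{PS11} together with \cite[Theorem 2.8]{Wat81} to get, for a prime divisor $F$ on $W$,
\[
a_F(E,\Delta_E+B_E)=\frac{1}{w_F}\,a_{\rho_W^{-1}(F)}(X,B),
\]
where $w_F$ is the Weil index of $f^*D_E$ along $F$. The right-hand side is a log discrepancy computed directly with respect to $(X,B)$, so the $\epsilon$-lc hypothesis applies verbatim; and since $ND_E$ is Cartier (Proposition~\ref{isotropy-control}), $w_F\leq N$. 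If you were to reconcile your proposed identity $a_{\widetilde P}(\widetilde X,F+\widetilde B)=q\cdot a_P(E,\Delta_E+B_E)$ with this one (taking $\widetilde P=\rho_W^{-1}(F)$, $q=w_F$), you would be forced to conclude $a_{\widetilde P}(\widetilde X,F+\widetilde B)=a_{\widetilde P}(X,B)$, which fails whenever $a_F(X,B)>0$ and the center of $\widetilde P$ meets $F$ — exactly the $\epsilon$-lc situation at hand. So the formula you wrote is not the one to use. The fix is to replace your comparison through $(\widetilde X, F+\widetilde B)$ by the direct Petersen--S\"uss/Watanabe comparison between $(E,\Delta_E+B_E)$ and $(X,B)$. (The ampleness half of your argument, reading it off the definition of the log Fano quotient, is fine and essentially matches the paper, which derives it by pushing forward a principal $\kk$-invariant divisor representing $m(K_X+B)$.)
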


\begin{proof}
By \cite[Proposition 3.11]{PS11}, every $\kk$-invariant Cartier divisor on an affine cone singularity is principal.
Moreover, from~\cite[Remark 3.7]{PS11} we know that the field of fractions of $X$ is isomorphic to $\kk(Y)[M]$,
where $\kk(Y)$ is the field of fractions of $Y$ and $M$ is the lattice of torus characters.
Hence, we can write 
\begin{equation}\label{principal}
m (K_X+B) = {\rm div}_X( f \chi^{u}),
\end{equation}
where $m$ is the Cartier index of $K_X+B$, $f$ is a rational function on $Y$, and $u \in M$
is contained on the weighted monoid of the action of $\kk$ at $x\in X$.
Pushing-forward equation~\eqref{principal} via $\rho$,
we obtain the equation
\begin{equation}\label{equation-on-Y}
m(K_E+\Delta_E+B_E) = -u D_E + H
\end{equation}
where $H={\rm div}_Y(f)$.
In particular, 
$-(K_E+\Delta_E+B_E)$ is an ample $\qq$-divisor. 

We claim that $(E,\Delta_E+B_E)$ is $\epsilon/N$-lc.
Let $(E,D_E;B_E)$ be the associated triple of $x\in (X,B)$.
For any birational morphism $f\colon W\rightarrow E$, let $\widetilde{X}_W$ be the relative spectrum of the divisorial sheaf $\bigoplus_{k\geq 0}\mathcal{O}_{W}(kf^*(D_E))$.
We have a commutative diagram 
\begin{equation}\label{commutative-diagram}
 \xymatrix{
  \widetilde{X}_W \ar[r]^-{\widetilde{f}}\ar[d]_-{\rho_W}   & \widetilde{X}\ar[r] \ar[d]_-{\widetilde{\rho}} & X \ar@{-->}[ld]^-{\rho} \\
  W \ar[r]^-{f} & E & 
 }
\end{equation}
where $\rho_W,\widetilde{f}$ and $\widetilde{\rho}$ are induced morphisms. For any prime divisor $F$ on $W$, let $w_F$ be the Weil index of $f^*D_E$ at $F$, i.e. the smallest positive integer such that $w_Ff^*D_E$ is a Weil divisor at $F$.
By~\cite[Proposition 3.14]{PS11} and~\cite[Theorem 2.8]{Wat81}, we have
\begin{equation}\label{comparison-log-discrepancies}
a_F(E, \Delta_E+B_E) = \frac{1}{w_F} a_{\rho_W^{-1}(F)}(X,B).
\end{equation}
By Proposition~\ref{isotropy-control}, the divisor $ND_E$ is Cartier. Since $f^*D_E = f^*(ND_E)/N$, the Weil index $w_F$ of $f^*D_E$ at $F$ is at most $N$.
Therefore $w_F\leq N$, and the proof is finished.
\end{proof}

\begin{definition}{\em 
Let $x\in (X,B)$ be a $\mathbb Q$-Gorenstein klt cone singularity such that $B$ is a $\mathbb Q$-divisor, and let $(E,\Delta_E+B_E)$ be its log Fano quotient. By equation~\eqref{equation-on-Y}
there exists a rational number $r$ so that 
\[
D_E \sim_\qq -r(K_E+\Delta_E+B_E).
\]
We say that $r\in \qq_{>0}$ is the {\em Fano angle} of $x\in (X,B)$ and is denoted by $r(x,X,B)$. We also define the Fano angle of $x\in X$ to be $r(x,X,0)$.
It is clear that $r(x,X,B)\geq r(x,X,0)$ for any klt pair $(X,B)$

}
\end{definition}

The following proposition shows that the log discrepancy of $(X,B)$ at the exceptional divisor obtained
by blowing-up the vertex of the cone singularity is the inverse of the Fano angle of the cone singularity.

\begin{proposition}\label{angle-vs-log-discrepancy}
Let $x\in (X,B)$ be a $\mathbb Q$-Gorenstein klt cone singularity such that $B$ is a $\mathbb Q$-divisor
Then we have that
\[
a_{F_{\widetilde{X}}}(X,B) = \frac{1}{r(x,X,B)}.
\]
Here, $F_{\widetilde{X}}$ is the exceptional divisor extracted by blowing-up the vertex $x\in X$.
\end{proposition}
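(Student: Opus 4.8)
The plan is to restrict the crepant pullback identity that defines the log discrepancy $a:=a_{F_{\widetilde X}}(X,B)$ to the exceptional divisor $F_{\widetilde X}$, rewrite that restriction via adjunction in terms of the log Fano quotient $(E,\Delta_E+B_E)$ and the $\qq$-polarization $D_E$, and then compare with the definition of the Fano angle. Throughout I use the presentation $\widetilde X={\rm Spec}_E\bigl(\bigoplus_{k\ge 0}\mathcal{O}_E(kD_E)\bigr)$ of the blow-up of the vertex, for which $F_{\widetilde X}$ is the zero section, canonically identified with $E$ via $\rho$, and $B_{\widetilde X}=\rho^{*}B_E$ is the strict transform of $B$ (cf.\ Theorem~\ref{demazure-isomorphism-with-boundary}).

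First I would collect three pieces of structural input. (i) Under the identification $F_{\widetilde X}\cong E$, one has $F_{\widetilde X}|_{F_{\widetilde X}}\sim_\qq -D_E$, i.e.\ $-F_{\widetilde X}|_{F_{\widetilde X}}$ is the $\qq$-polarization; in the local description of $\widetilde X$ along $F_{\widetilde X}$ (a transverse cyclic quotient over each prime where $D_E$ is not Cartier) this is a local computation. (ii) Adjunction along $F_{\widetilde X}$, applied to $K_{\widetilde X}+F_{\widetilde X}+B_{\widetilde X}$, gives
\[
(K_{\widetilde X}+F_{\widetilde X}+B_{\widetilde X})|_{F_{\widetilde X}} \;=\; K_{F_{\widetilde X}}+{\rm Diff}_{F_{\widetilde X}}(B_{\widetilde X}) \;\sim_\qq\; K_E+\Delta_E+B_E ,
\]
the last identification because ${\rm Diff}_{F_{\widetilde X}}(B_{\widetilde X})=\Delta_E+B_E$: the summand $B_E$ is the transverse intersection $\rho^{*}B_E\cap F_{\widetilde X}$, while $\Delta_E=\sum_Z(1-1/q_Z)Z$ records the cyclic quotient singularities of $\widetilde X$ transverse to $F_{\widetilde X}$ over the primes $Z\subset E$ where $D_E$ has denominator $q_Z$ (cf.\ \cite[Theorem 2.8]{Wat81}, \cite[Proposition 3.14]{PS11}, and the proof of Proposition~\ref{singularities-base}). (iii) Since $\pi(F_{\widetilde X})=\{x\}$ is a point and $K_X+B$ is $\qq$-Cartier, $\pi^{*}(K_X+B)|_{F_{\widetilde X}}\sim_\qq 0$.

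Granting this, the conclusion is formal. By definition of the log discrepancy, $K_{\widetilde X}+B_{\widetilde X}=\pi^{*}(K_X+B)+(a-1)F_{\widetilde X}$, hence $K_{\widetilde X}+F_{\widetilde X}+B_{\widetilde X}=\pi^{*}(K_X+B)+a\,F_{\widetilde X}$. Restricting to $F_{\widetilde X}$ and using (iii), (ii) and (i),
\[
K_E+\Delta_E+B_E \;\sim_\qq\; a\,F_{\widetilde X}|_{F_{\widetilde X}} \;\sim_\qq\; -a\,D_E .
\]
On the other hand the definition of the Fano angle gives $D_E\sim_\qq -r(x,X,B)\,(K_E+\Delta_E+B_E)$, i.e.\ $K_E+\Delta_E+B_E\sim_\qq -\tfrac{1}{r(x,X,B)}D_E$. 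Comparing, $\bigl(a-\tfrac{1}{r(x,X,B)}\bigr)D_E\sim_\qq 0$; since $D_E$ is ample by Theorem~\ref{demazure-isomorphism-with-boundary}, this forces $a=\tfrac{1}{r(x,X,B)}$, which is the claim.

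The main obstacle is input (ii): making the adjunction computation along $F_{\widetilde X}$ rigorous. One must know that $F_{\widetilde X}$ is normal and $\qq$-Cartier near itself, so that adjunction applies (here one uses that $-F_{\widetilde X}$ is $\pi$-ample), that $K_{F_{\widetilde X}}$ matches $K_E$ under $\rho$, and—most delicately—that ${\rm Diff}_{F_{\widetilde X}}(\rho^{*}B_E)$ is exactly $\Delta_E+B_E$ with $\Delta_E$ the precise boundary used to define the log Fano quotient. This is the same Watanabe-type local analysis, now at the level of divisors rather than of discrepancies, that underlies equation~\eqref{comparison-log-discrepancies}; once it is in place, the rest of the argument is bookkeeping.
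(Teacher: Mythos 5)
Your proof is correct, and it takes a genuinely different route from the paper's. The paper proves this directly via the character decomposition: it writes $m(K_X+B)=\operatorname{div}_X(f\chi^u)$, computes $a_{F_{\widetilde X}}(X,B)=1+\coeff_{F_{\widetilde X}}\bigl(K_{\widetilde X}+B_{\widetilde X}-\tfrac1m\operatorname{div}_{\widetilde X}(f\chi^u)\bigr)=-u/m$ using Watanabe and Petersen--S\"uss, and reads off $-u/m=1/r(x,X,B)$ from equation~\eqref{equation-on-Y}, which defines the Fano angle. You instead restrict the crepant equation $K_{\widetilde X}+F_{\widetilde X}+B_{\widetilde X}=\pi^*(K_X+B)+a\,F_{\widetilde X}$ to $F_{\widetilde X}$, use $\pi^*(K_X+B)|_{F_{\widetilde X}}\sim_\qq 0$ (a constant map pulls back any $\qq$-Cartier divisor to a $\qq$-trivial one), adjunction to identify the left side with $K_E+\Delta_E+B_E$, and the normal-bundle computation $F_{\widetilde X}|_{F_{\widetilde X}}\sim_\qq -D_E$ (zero section of $\Spec_E\bigoplus_{k\ge0}\mathcal{O}_E(kD_E)$); comparing with $D_E\sim_\qq -r(K_E+\Delta_E+B_E)$ and using ampleness of $D_E$ then forces $a=1/r$. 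Your key ingredient (ii) is exactly the paper's Lemma~\ref{exceptional-vs-quotient}, which in the text is proved just \emph{after} this proposition; there is no circularity (its proof does not use Proposition~\ref{angle-vs-log-discrepancy}), but if one wanted to substitute your argument one would reorder the two results. What each approach buys: the paper's computation is self-contained and short given the Watanabe/Petersen--S\"uss formulas; yours is more geometric, packages the technical input into a single adjunction identity, and makes transparent why the reciprocal appears (the log discrepancy becomes the coefficient relating $-D_E$ to the anti-log-canonical class, i.e.\ the inverse of the Fano angle by definition). Both are valid; yours is arguably the cleaner conceptual route once Lemma~\ref{exceptional-vs-quotient} is in hand.
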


\begin{proof}
By equation~\eqref{principal}, we can write
\[
m(K_X+B) = {\rm div}_X(f\chi^u)
\]
for some positive integer $m$ and some 
element $u\in M$.
By~\cite[Theorem 2.8]{Wat81} and~\cite[Proposition 3.14]{PS11}, we have that 
\[
a_{F_{\widetilde{X}}}(X,B)= 1+
{\rm coeff}_{F_{\widetilde{X}}}\left( K_{\widetilde{X}}+B_{\widetilde{X}} - \frac{1}{m} {\rm div}_{\widetilde{X}}(f\chi^u) \right) =
-\frac{u}{m} = \frac{1}{r(x,X,B)}.
\]
\end{proof}

\begin{proposition}\label{prop:control-ld-vertex}
Let $n$ be a positive integer, 
$r$ and $\epsilon$ be two positive real numbers.
Then there exists a positive constant $\epsilon_0$, depending only on $\epsilon$ and $r$, satisfying the following.
For any $n$-dimensional $\mathbb Q$-Gorenstein klt cone singularity $x\in (X,B)$ such that
\begin{itemize}
    \item $x\in X$ has trivial isotropies,
    \item $B$ is a $\mathbb Q$-divisor
    \item $r(x,X,B) \leq r$, and
    \item the log Fano quotient $(E,\Delta_E+B_E)$ of $(X,B)$ is $\epsilon$-lc,
\end{itemize}
then $x\in (X,B)$ is $\epsilon_0$-lc.
\end{proposition}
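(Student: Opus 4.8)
The plan is to reduce the $\epsilon_0$-lc statement for $x\in(X,B)$ to a uniform lower bound on log discrepancies over the vertex and over points away from the vertex separately, and then combine the two. First I would observe that since $x\in X$ has trivial isotropies, Proposition~\ref{isotropy-control} gives that $D_E$ is already Cartier, so the $\qq$-polarization contributes no extra denominators. Consequently, in the commutative diagram~\eqref{commutative-diagram} and the comparison formula~\eqref{comparison-log-discrepancies}, every Weil index $w_F$ equals $1$, which means $a_F(E,\Delta_E+B_E)=a_{\rho_W^{-1}(F)}(X,B)$ for every prime divisor $F$ over $E$. Thus any divisor over $X$ whose center is \emph{not} the vertex $x$ descends to a divisor over $E$, and its log discrepancy with respect to $(X,B)$ equals the log discrepancy of its image with respect to the $\epsilon$-lc pair $(E,\Delta_E+B_E)$; hence such divisors have log discrepancy $\geq\epsilon$.

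Next I would handle divisors centered at the vertex. Blowing up the vertex produces $F_{\widetilde X}$, and Proposition~\ref{angle-vs-log-discrepancy} gives $a_{F_{\widetilde X}}(X,B)=1/r(x,X,B)\geq 1/r$ by hypothesis. But one blow-up is not enough: there can be divisors over $\widetilde X$ whose center lies inside $F_{\widetilde X}$, and I must bound their log discrepancies too. The key structural fact is that $\widetilde X\to X$ is a weighted blow-up of the cone (equivalently, $\widetilde X$ is the total space of a line-bundle-like object over $E$ with $F_{\widetilde X}\cong E$ by Lemma~\ref{exceptional-vs-quotient} once it is available, or one can argue directly using the $\kk^*$-equivariant structure), so that $(\widetilde X,\,\mathrm{Diff}_{F_{\widetilde X}}(\cdot)+F_{\widetilde X})$ — or more precisely the adjunction of $K_{\widetilde X}+(1-a_{F_{\widetilde X}}(X,B))F_{\widetilde X}+B_{\widetilde X}$ to $F_{\widetilde X}$ — recovers $(E,\Delta_E+B_E)$ up to the angle scaling. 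A divisor over $X$ centered in the vertex corresponds, after this single blow-up, to a divisor over $\widetilde X$ with center meeting $F_{\widetilde X}$, and a discrepancy computation relative to $F_{\widetilde X}$ expresses its log discrepancy over $(X,B)$ in terms of (i) the number $a_{F_{\widetilde X}}(X,B)=1/r(x,X,B)$, and (ii) a log discrepancy of the induced divisor over the $\epsilon$-lc pair $(E,\Delta_E+B_E)$. Quantifying this, a divisor $G$ over $X$ with center in the vertex satisfies a bound of the shape $a_G(X,B)\geq c(r)\cdot a_{G'}(E,\Delta_E+B_E)\geq c(r)\,\epsilon$ for an explicit positive constant $c(r)$ depending only on $r$ (coming from the weighted-blow-up discrepancy formula, i.e.\ from the Fano angle bound).

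Finally I would set $\epsilon_0:=\min\{\epsilon,\ 1/r,\ c(r)\epsilon\}$, which depends only on $\epsilon$ and $r$ as required, and conclude that every divisor over $X$ — whether centered at the vertex or not — has log discrepancy with respect to $(X,B)$ at least $\epsilon_0$, so $(X,B)$ is $\epsilon_0$-lc at $x$.

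The main obstacle I expect is step two: making precise and uniform the discrepancy estimate for divisors whose center lies in $F_{\widetilde X}$. The clean statement — that $\widetilde X\to X$ is the blow-up of the vertex and behaves like a weighted cone with exceptional divisor $E$ — needs the cone structure of $\widetilde X$ over $E$ (Lemma~\ref{exceptional-vs-quotient}, in the klt case) together with an adjunction/inversion-of-adjunction argument relating discrepancies over $\widetilde X$ near $F_{\widetilde X}$ to discrepancies over $(E,\Delta_E+B_E)$, and one must track exactly how the Fano angle $r$ enters the scaling so that the constant $c(r)$ is genuinely a function of $r$ alone. The triviality of isotropies is what makes this work without introducing $N$-dependent denominators, so it is essential that I invoke Proposition~\ref{isotropy-control} at the outset to kill all the $w_F$ factors; the remaining bookkeeping is the analogue, for a single weighted blow-up, of the comparison in Proposition~\ref{singularities-base}.
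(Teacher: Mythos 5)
Your overall decomposition is the right one, and $\epsilon_0=\min\{\epsilon,1/r\}$ is in fact the answer (you don't need an extra $c(r)\epsilon$ term). However, the way you propose to close step two has a genuine gap. You assert that a divisor $G$ over $X$ with center at the vertex satisfies an inequality of the shape $a_G(X,B)\geq c(r)\cdot a_{G'}(E,\Delta_E+B_E)$, but the ``induced divisor $G'$ over $E$'' is not well-defined for an arbitrary such $G$: after blowing up the vertex, the center of $G$ on $\widetilde X$ is just a subvariety of $F_{\widetilde X}\cong E$, and there is no canonical divisor over $E$ attached to it. Already for $X=\kk^2$ with the standard $\kk^*$-action (so $E=\pp^1$, $r=1/2$), taking $G$ to be the exceptional divisor of the $(1,k)$-weighted blow-up at the origin gives $a_G(X)=1+k$, unbounded, while any sensible ``$a_{G'}(E)$'' one could cook up is bounded — so no multiplicative relation of the type you claim is available. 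Put differently, the inversion-of-adjunction step you want cannot by itself produce a uniform constant; something more rigid about the cone geometry is needed.

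What closes the gap — and is what the paper does — is to produce a single explicit log resolution of $(X,B)$ from a log resolution of $(E,\Delta_E+B_E)$ and read everything off of it. Take $f\colon W\to E$ a log resolution of the log Fano quotient, and form $\widetilde X_W:=\operatorname{Spec}_W\bigl(\bigoplus_{k\geq 0}\mathcal{O}_W(kf^*D_E)\bigr)$. Because the isotropies are trivial, $D_E$ is Cartier (Proposition~\ref{isotropy-control}) and $\widetilde X_W$ is a line bundle over the smooth $W$, hence log smooth; the induced map $\widetilde X_W\to X$ is a log resolution of $(X,B)$ (this is~\cite[Example 2.5]{LS13}). On this resolution the comparison formula~\eqref{comparison-log-discrepancies} holds with all Weil indices $w_F=1$, so $a_{\rho_W^{-1}(F)}(X,B)=a_F(E,\Delta_E+B_E)\geq\epsilon$. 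The crucial structural fact you need, and which replaces your $c(r)$-inequality, is that the exceptional divisors of $\widetilde X_W\to X$ are \emph{precisely} the pulled-back divisors $\rho_W^{-1}(F)$ together with the strict transform of $F_{\widetilde X}$; there are no others. The former have log discrepancy $\geq\epsilon$, the latter has log discrepancy $1/r(x,X,B)\geq 1/r$ by Proposition~\ref{angle-vs-log-discrepancy}. Since on a log smooth model with all coefficients $\leq 1-\epsilon_0$ every further divisor has log discrepancy $\geq\epsilon_0$, you conclude $a_G(X,B)\geq\min\{\epsilon,1/r\}$ for \emph{every} prime divisor $G$ over $x\in X$, whether centered at the vertex or not. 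This simultaneously fixes the slight imprecision in your step one as well: you don't need to argue that arbitrary (possibly non-equivariant) divisors away from the vertex ``descend'' to $E$; they simply have log discrepancy bounded below by the minimum over divisors appearing on the explicit log resolution $\widetilde X_W$.
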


\begin{proof}
Let $f\colon W\rightarrow E$ be a log resolution of the pair $(E,\Delta_E+B_E)$.
Let $(E,D_E;B_E)$ be the associated triple of the cone singularity $x\in (X,B)$.
By~\cite[Example 2.5]{LS13}, the relative spectrum $\widetilde{X}_W$ of the divisorial sheaf
$\bigoplus_{k\geq 0}\mathcal{O}_{E'}(kf^*(D_E))$ is log smooth,
and there is an induced log resolution $\widetilde{X}_W\rightarrow X$ of the pair $(X,B)$.
By~\cite[Proposition 3.14]{PS11} and~\cite[Theorem 2.8]{Wat81}, we have
\[
\epsilon \leq a_F(E,\Delta_E+B_E) = a_{\rho_W^{-1}(F)}(X,B),
\]
for every prime divisor $F$ on $W$.

On the other hand, any divisor on $\widetilde{X}_W$ which is exceptional over $X$, has either the form $\rho^{-1}_W(F)$ for some prime divisor $F$ on $W$, or is the strict transform
of the unique prime exceptional divisor $F_{\widetilde{X}}$ of $\widetilde{X}\rightarrow X$.
Thus, by Proposition~\ref{angle-vs-log-discrepancy},
we conclude that for any prime divisor $F$ over $X$ we have that
\[
a_F(X,B) \geq \min\left\{ 
\epsilon, \frac{1}{r}
\right\}.
\]
Let $\epsilon_0:=\min\{\epsilon,\frac{1}{r}\}$ and the proof is finished.
\end{proof}

\begin{lemma}\label{lem:control-isotropies}
Let $x\in X$ and $x'\in X'$ be two cone singularities
and $\phi \colon X\rightarrow X'$ a $\kk^*$-equivariant cyclic quotient of degree $d$. 
If $x'\in X$ has isotropies at most $N$, then
$x\in X$ has isotropies at most $dN$.
\end{lemma}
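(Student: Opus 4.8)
The plan is to reduce the statement to an elementary fact about the stabilizer subgroups of the two $\kk^*$-actions. Recall that for a cone singularity $z\in Z$ and a point $y\neq z$, the isotropy of the action at $y$ is by definition the order of the (finite, cyclic) stabilizer $G_y:=\{t\in\kk^*\mid t\cdot y=y\}\subset\kk^*$. So it suffices to compare $|G_y|$ with $|G_{y'}|$ for an arbitrary point $y\in X\setminus\{x\}$ and its image $y':=\phi(y)$ in $X'$.

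First I would unwind the hypothesis that $\phi\colon X\to X'$ is a $\kk^*$-equivariant cyclic quotient of degree $d$: this means $X'$ is the quotient of $X$ by the subgroup $\mu_d\subset\kk^*$ of the one-dimensional acting torus, the residual $\kk^*/\mu_d$-action on $X'$ being identified with the standard $\kk^*$-action via the $d$-th power isomorphism $\kk^*/\mu_d\xrightarrow{\ \sim\ }\kk^*$. Equivalently, $\phi$ is equivariant for the degree-$d$ endomorphism $p_d\colon\kk^*\to\kk^*$, $t\mapsto t^d$; that is,
\[
\phi(t\cdot y)=t^d\cdot\phi(y)\qquad\text{for all }t\in\kk^*,\ y\in X.
\]
Since $x$ is the unique fixed point and $\phi^{-1}(x')=\mu_d\cdot x=\{x\}$, the morphism $\phi$ maps $X\setminus\{x\}$ onto $X'\setminus\{x'\}$; in particular $y'\neq x'$ whenever $y\neq x$, so the isotropy of $X'$ at $y'$ is defined and is $\leq N$ by hypothesis.

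Now fix $y\in X\setminus\{x\}$. If $t\in G_y$, then $t^d\cdot y'=t^d\cdot\phi(y)=\phi(t\cdot y)=\phi(y)=y'$, so $p_d$ restricts to a group homomorphism $G_y\to G_{y'}$ whose kernel is $G_y\cap\mu_d$, a group of order at most $d$. Hence
\[
|G_y|=|G_y\cap\mu_d|\cdot|p_d(G_y)|\leq d\cdot|G_{y'}|\leq dN.
\]
As $y$ was arbitrary, the cone singularity $x\in X$ has isotropies at most $dN$, as claimed.

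The argument is essentially formal, and I expect the only real subtlety to lie in the first step: getting the bookkeeping right for the residual torus action on $X'=X/\mu_d$, which is precisely what produces the factor $d$ (namely the order of the kernel $\mu_d$ of $p_d$). As a cross-check one could argue instead via Proposition~\ref{isotropy-control}: the two cone singularities share the same quotient $E$, the $\qq$-polarization of $X'$ is $d$ times that of $X$, and the Cartier index of $D_E$ at a point of $E$ differs from that of $d\,D_E$ by a factor dividing $d$; this recovers the bound $dN$.
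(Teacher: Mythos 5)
Your proof is correct and follows essentially the same route as the paper's: both arguments reduce to the observation that the $d$-th power equivariance $\phi(t\cdot y)=t^d\cdot\phi(y)$ makes $p_d$ carry stabilizers to stabilizers with kernel of order at most $d$. If anything your version is the more carefully stated of the two. The paper restricts $\phi$ to the orbit $\kk^*_y$, names the degree of that restriction $e\leq d$, and then asserts ``the isotropy at $y$ is at most $e$ times the isotropy at $y'$''; but chasing orders through the diagram gives $e\,|G_y|=d\,|G_{y'}|$, i.e.\ $|G_y|=(d/e)\,|G_{y'}|$, so the stated multiplier should be $d/e$, not $e$ (the final bound $|G_y|\leq dN$ is of course still what falls out). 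Your direct computation $|G_y|=|G_y\cap\mu_d|\cdot|p_d(G_y)|\leq d\cdot|G_{y'}|$ avoids this slip entirely. One small notational remark: the hypothesis in the statement should read ``$x'\in X'$ has isotropies at most $N$'' (an evident typo in the paper), which is how you correctly read it.
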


\begin{proof}
Let $y\not=x$ be a closed point that is contained in $X$ and let $y':=\phi(y)$. Let $\kk^*_y$ be the orbit of the $\kk$-action corresponding to the point $y$.
The restriction of $\phi \colon X\rightarrow X'$ to $\kk^*_{y}$ induces the quotient $\kk^*_{y} \rightarrow \kk^*_{y}/\mu_{e} \simeq \kk^*_{y'}$ of the torus by the group of $e$-th roots of unit, where $e\leq d$. 
Thus, the isotropy at $y$ is at most $e$ times the isotropy at $y'$, and the proof is finished. \end{proof}

\begin{lemma}\label{exceptional-vs-quotient}
Let $x\in (X,B)$ be a klt cone singularity and $\widetilde{X}\rightarrow X$ the blow-up of the vertex with exceptional divisor $F_{\widetilde{X}}$.
Then the pair obtained by adjunct $K_{\widetilde{X}}+F_{\widetilde{X}}+B_{\widetilde{X}}$ to $F_{\widetilde{X}}$ is isomorphic to the log Fano quotient of $x\in (X,B)$.
\end{lemma}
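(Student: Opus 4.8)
The plan is to make $F_{\widetilde X}$ explicit through the $\mathbb{T}$-variety structure, and then to compute the different $\mathrm{Diff}_{F_{\widetilde X}}(B_{\widetilde X})$ by separating its two contributions: the quotient singularities of $\widetilde X$ along $F_{\widetilde X}$, which produce $\Delta_E$, and the strict transform of $B$, which produces $B_E$.

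First I would recall, from Theorem~\ref{demazure-isomorphism-with-boundary} and the classical description of affine cones over polarized varieties (\cite[3.5]{Dem88}; \cite[Example 2.5]{LS13} and \cite{ADHL15} in the $\qq$-Cartier case), that the blow-up $\pi\colon\widetilde X\rar X$ of the vertex is the relative spectrum $\Spec_E\left(\bigoplus_{k\geq 0}\mathcal O_E(kD_E)\right)$, that $\rho\colon\widetilde X\rar E$ is its structure morphism (agreeing with the good quotient of $X\setminus\{x\}$ away from $F_{\widetilde X}$), and that $F_{\widetilde X}$, the zero section, is cut out by $\bigoplus_{k>0}\mathcal O_E(kD_E)$. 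Hence $\rho|_{F_{\widetilde X}}\colon F_{\widetilde X}\rar E$ is an isomorphism --- this is the identification announced in the Notation preceding the statement --- and $F_{\widetilde X}$ is normal, so adjunction along $F_{\widetilde X}$ is defined. Moreover $B_{\widetilde X}$, the strict transform of $B$, equals $\rho^*B_E$ and in particular does not contain $F_{\widetilde X}$, so the pair obtained by adjunction is $\left(F_{\widetilde X},\mathrm{Diff}_{F_{\widetilde X}}(B_{\widetilde X})\right)$.

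It then remains to match the coefficients of $\mathrm{Diff}_{F_{\widetilde X}}(B_{\widetilde X})$ and of $\Delta_E+B_E$ along every prime divisor $Z\subset E\cong F_{\widetilde X}$. By Proposition~\ref{isotropy-control} the Cartier index of $D_E$ along $Z$ --- i.e. the integer $q_Z$ occurring in the definition of $\Delta_E$ --- is the isotropy of the $\kk$-action along $\overline{\rho^{-1}(Z)}$, so $\widetilde X$ has a cyclic quotient singularity of order $q_Z$ at the generic point of $\overline{\rho^{-1}(Z)}\cap F_{\widetilde X}$. Applying the adjunction formula for cyclic quotient singularities (\cite[Theorem 2.8]{Wat81}, \cite[Proposition 3.14]{PS11}) --- exactly the input used in the proofs of Propositions~\ref{singularities-base} and~\ref{angle-vs-log-discrepancy} --- the contribution of the ambient singularity to the different along $Z$ is $1-\tfrac1{q_Z}$, which is the coefficient of $\Delta_E$ at $Z$, and the contribution of $B_{\widetilde X}=\rho^*B_E$ is $\tfrac1{q_Z}$ times its coefficient along $\overline{\rho^{-1}(Z)}$, which is exactly the coefficient of $B_E$ at $Z$. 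Equivalently, one may read the same equality off equation~\eqref{comparison-log-discrepancies} (established in the proof of Proposition~\ref{singularities-base}) applied with $W=E$: it gives $a_Z(E,\Delta_E+B_E)=\tfrac1{q_Z}\,a_{\overline{\rho^{-1}(Z)}}(X,B)$, and since $\overline{\rho^{-1}(Z)}$ is a non-exceptional divisor on $X$ this determines the coefficient of $\Delta_E+B_E$ at $Z$ in terms of $B$, matching that of the different via its defining adjunction identity.

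I expect the only genuine difficulty to be bookkeeping with the normalization of $\rho^*$ and $\rho_*$ on $\kk$-invariant divisors: one must use the convention under which $\rho^*Z=q_Z\,\overline{\rho^{-1}(Z)}$, so that $B=\rho^*B_E$ has coefficient $q_Z\coeff_ZB_E$ along $\overline{\rho^{-1}(Z)}$; it is precisely the cancellation of these factors $q_Z$ against the ones generated by the quotient singularity in the different that yields $B_E$ itself rather than a rescaled version. The identification of $\widetilde X$ with the relative spectrum and of $F_{\widetilde X}$ with $E$ is standard, but the klt hypothesis (equivalently, normality of $X$ with $D_E$ ample) should be invoked explicitly, as it is what makes $F_{\widetilde X}$ normal and the different well defined.
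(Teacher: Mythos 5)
Your proof is correct, and it takes a genuinely different route from the paper's. The paper argues by passing to the auxiliary cone $x'\in X'$ over the triple $(E,ND_E;B_E)$ (the Veronese subring of degree $N=$ Cartier index of $D_E$), where the $\kk^*$-action has trivial isotropies and hence $\widetilde{X}'\rightarrow E$ is a locally trivial $\mathbb{A}^1$-bundle; it establishes pltness of $(\widetilde{X}',F_{\widetilde{X}'}+\Delta_{\widetilde{X}'}+B_{\widetilde{X}'})$ from the bundle structure, transfers this to $(\widetilde{X},F_{\widetilde{X}}+B_{\widetilde{X}})$ via the cyclic cover $\widetilde{\phi}$ and the Hurwitz formula (which is exactly how the $\Delta_{\widetilde{X}'}$ term is absorbed), and in this way gets normality of $F_{\widetilde{X}}$ and the restriction formula in one shot. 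You instead identify $F_{\widetilde{X}}$ with $E$ directly as the zero section of the relative spectrum and match the coefficients of $\mathrm{Diff}_{F_{\widetilde{X}}}(B_{\widetilde{X}})$ against those of $\Delta_E+B_E$ prime divisor by prime divisor, with the cyclic-quotient adjunction formula of Watanabe and Petersen--S\"uss supplying the local input. The two arguments are morally equivalent --- localizing the paper's cyclic cover $\widetilde{\phi}$ at the generic point of $Z\cap F_{\widetilde{X}}$ is precisely the order-$q_Z$ cyclic quotient whose adjunction you compute --- but your version makes explicit where each $(1-1/q_Z)$ and each $\coeff_Z B_E$ comes from, and makes visible the cancellation between the factor $q_Z$ in $\rho^*Z=q_Z\,\overline{\rho^{-1}(Z)}$ and the $1/q_Z$ from the different, whereas the paper's pull-back trick hides that bookkeeping inside Hurwitz and gives normality of $F_{\widetilde{X}}$ as a byproduct of the plt statement rather than from the zero-section description. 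Both proofs rely on the same references and both implicitly use the identification of the equivariant blow-up $\widetilde{X}$ with $\Spec_E\bigl(\bigoplus_{k\geq0}\mathcal{O}_E(kD_E)\bigr)$, which you state but should be careful about: it is this identification, not the klt hypothesis per se, that makes $F_{\widetilde{X}}\cong E$ and hence normal (the klt hypothesis enters to guarantee $E$ itself is normal and the log Fano quotient is defined).
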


\begin{proof}
Let $(E,D_E;B_E)$ be the associated triple of $x\in (X,B)$ and $(E,\Delta_E+B_E)$ the log Fano quotient.
Let $x'\in (X',B')$ be the cone singularity corresponding to the triple $(E,ND_E;B_E)$, where $N$ is the Cartier index of $D_E$ which induces a $\kk$-equivariant  morphism $\phi \colon X\rightarrow X'$ of degree $N$.
Let $\widetilde{X}'$ be the relative spectrum of $\bigoplus_{k\geq 0}\mathcal{O}_E(kND_E)$, which is an $\mathbb{A}^1$-bundle over $E$. 
Let $F_{\widetilde{X}'}$ be the exceptional divisor extracted by the morphism $\widetilde{X}'\rightarrow X'$ and $\rho'\colon \widetilde{X}'\rightarrow E$ the good quotient morphism.
We have a commutative diagram:
\begin{equation}\label{finite-quotient-diagram}\nonumber
 \xymatrix{
  & E \\ 
 \widetilde{X} \ar[r]_-{\widetilde{\phi}}\ar[ru]^-{\widetilde{\rho}}\ar[d] & \widetilde{X}'\ar[d]\ar[u]_-{\rho'}\\
 X\ar[r]^-{\phi} & X' \\
 }
\end{equation}

Let
\[
\Delta_{\widetilde{X}'} := {\rho'}^*(\Delta_E), \qquad
B_{\widetilde{X}'} := {\rho'}^*(B_E),
\]
then
\[
\left(\widetilde{X}', F_{\widetilde{X}'} + \Delta_{\widetilde{X}'}+B_{\widetilde{X}'}\right)
\]
is plt. Indeed, it is a locally trivial $\mathbb{A}^1$ bundle over a klt variety, being $F_{\widetilde{X}'}$ the zero section. Hence we have that
\[
K_{\widetilde{X}}+F_{\widetilde{X}}+B_{\widetilde{X}}=
\phi^*( K_{\widetilde{X}'}+ F_{\widetilde{X}'} + \Delta_{\widetilde{X}'}+B_{\widetilde{X}'}),
\]
is also plt. In particular, $F_{\widetilde{X}}$ is normal. Since $\rho \colon F_{\widetilde{X}}\rightarrow E$ is a bijection between normal varieties, it is an isomorphism. We then have
Under this isomorphism we have that 
\[
(K_{\widetilde{X}}+F_{\widetilde{X}})|_{F_{\widetilde{X}}} \simeq K_E+\Delta_E, \text{ and }
B_{\widetilde{X}}|_{F_{\widetilde{X}}} \simeq B_E.
\]
In particular, 
$$(K_{\widetilde{X}}+F_{\widetilde{X}}+B_{\widetilde{X}})|_{F_{\widetilde{X}}}\simeq K_E+\Delta_E+B_E$$
and the proof is finished.
\end{proof}

\begin{remark}{\em 
By Lemma \ref{exceptional-vs-quotient}, from now on we may identify $F_{\widetilde{X}}$ with $E$.}
\end{remark}

\begin{notation}{\em 
We denote $\mathcal{C}_{n}(\epsilon,N)$  the set of $n$-dimensional $\mathbb Q$-Gorenstein $\epsilon$-lc cone singularities $x\in (X,B)$ with isotropies bounded by $N$.
When $\epsilon$ is a positive real number, according to \cite[Theorem 1]{Mor18b},
$\mathcal{C}_{n}(\epsilon,N)$ is bounded.
}
\end{notation}

\begin{corollary}\label{blow-up-vertex}
Let $(x\in (X,B))\in\mathcal{C}_{n}(\epsilon,N)$ be a cone singularity.
Then the blow-up of $x$ is an $\epsilon/N$-plt blow-up. 
\end{corollary}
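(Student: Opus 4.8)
\emph{Proof proposal for Corollary~\ref{blow-up-vertex}.}
The plan is to check the three conditions defining an $\epsilon/N$-plt blow-up for $\pi\colon\widetilde X\to X$, the blow-up of the vertex, with exceptional divisor $F_{\widetilde X}$ and $B_{\widetilde X}:=\pi_{*}^{-1}B$ the strict transform of $B$. The first two conditions are formal consequences of the description of $\widetilde X$ from Theorem~\ref{demazure-isomorphism-with-boundary} and the subsequent Notation: $\widetilde X=\Spec_{E}\bigoplus_{k\ge 0}\mathcal O_{E}(kD_{E})$ is the total space of the orbifold line bundle attached to $-D_{E}$, so $\pi$ is an isomorphism over $X\setminus\{x\}$ and contracts the zero section $F_{\widetilde X}$ to $x$; hence $F_{\widetilde X}$ is the unique $\pi$-exceptional divisor, with ${\rm center}_{X}F_{\widetilde X}=\{x\}$. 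Moreover $-F_{\widetilde X}|_{F_{\widetilde X}}$ is $\qq$-linearly equivalent to the ample $\qq$-divisor $D_{E}$ (it represents the conormal class of $F_{\widetilde X}$), and since $\pi$ contracts precisely $F_{\widetilde X}$ to a point, relative ampleness of $-F_{\widetilde X}$ over $X$ follows.

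The substance of the corollary is the third condition, that $(\widetilde X,F_{\widetilde X}+B_{\widetilde X})$ is $\epsilon/N$-plt near $F_{\widetilde X}$. As $\epsilon>0$, the singularity $x\in(X,B)$ is klt; after reducing to the case where $B$ is a $\qq$-divisor, Proposition~\ref{singularities-base} gives that the log Fano quotient $(E,\Delta_{E}+B_{E})$ is $\epsilon/N$-log Fano, in particular $\epsilon/N$-lc. By the proof of Lemma~\ref{exceptional-vs-quotient}, the pair $(\widetilde X,F_{\widetilde X}+B_{\widetilde X})$ is plt near $F_{\widetilde X}$ and fits into a crepant degree-$N$ cyclic quotient $\widetilde{\phi}\colon\widetilde X\to\widetilde X'$, where $\widetilde X'=\Spec_{E}\bigoplus_{k\ge 0}\mathcal O_{E}(kND_{E})$ is a locally trivial $\af 1.$-bundle over $E$ with $F_{\widetilde X'}$ the zero section, $\Delta_{\widetilde X'}+B_{\widetilde X'}=(\rho')^{*}(\Delta_{E}+B_{E})$, and $K_{\widetilde X}+F_{\widetilde X}+B_{\widetilde X}=\widetilde{\phi}^{*}\bigl(K_{\widetilde X'}+F_{\widetilde X'}+\Delta_{\widetilde X'}+B_{\widetilde X'}\bigr)$. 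The decisive input is then the local statement that a locally trivial $\af 1.$-bundle over an $\epsilon/N$-lc pair, with its zero section taken as reduced boundary, is $\epsilon/N$-plt near the zero section: Zariski-locally such a pair is a product $(U\times\af 1.,\,\Gamma_{U}\times\af 1.+U\times\{0\})$ with $(U,\Gamma_{U})$ $\epsilon/N$-lc, and crossing a log resolution of $(U,\Gamma_{U})$ with $\af 1.$ produces a log resolution of the product in which every exceptional divisor is either of the form $D_{0}\times\af 1.$, of log discrepancy $a_{D_{0}}(U,\Gamma_{U})\ge\epsilon/N$, or is extracted by further blow-ups that meet $U\times\{0\}$ transversally and hence have log discrepancy $\ge\min\{\epsilon/N,\dim U\}=\epsilon/N$ (using $\epsilon/N\le 1$). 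Since a crepant finite morphism multiplies log discrepancies by ramification indices, which are $\ge 1$, we conclude that for every prime divisor $D$ over $\widetilde X$ whose centre meets $F_{\widetilde X}$,
\[
a_{D}\bigl(\widetilde X,F_{\widetilde X}+B_{\widetilde X}\bigr)\ \ge\ a_{\widetilde{\phi}_{*}D}\bigl(\widetilde X',F_{\widetilde X'}+\Delta_{\widetilde X'}+B_{\widetilde X'}\bigr)\ \ge\ \frac{\epsilon}{N}.
\]

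I expect the main obstacle to be exactly that local statement — that an $\af 1.$-bundle over an $\epsilon/N$-lc base, with zero section as reduced boundary, is $\epsilon/N$-plt near the zero section. Although geometrically transparent, making it rigorous requires a careful accounting of the log discrepancies of divisors whose centre meets the zero section but is not of product type, and uses the evident bound $\epsilon/N\le 1$ (every prime divisor on $X$ has log discrepancy $1$). The remaining subtlety is the reduction to $B$ a $\qq$-divisor, needed so that Proposition~\ref{singularities-base} and Lemma~\ref{exceptional-vs-quotient} apply verbatim; this follows by perturbing the coefficients of $B$, the $\epsilon/N$-lc property of the log Fano quotient being stable under small perturbations. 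Everything else is an assembly of results already in the preliminaries.
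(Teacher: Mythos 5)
Your proof is correct, but it takes a genuinely different route from the paper, which disposes of the key step in one line: the paper shows $(E,\Delta_E+B_E)$ is $\epsilon/N$-lc via Proposition~\ref{singularities-base}, identifies it with $(K_{\widetilde X}+F_{\widetilde X}+B_{\widetilde X})|_{F_{\widetilde X}}$ via Lemma~\ref{exceptional-vs-quotient}, and then invokes inversion of adjunction to conclude that $(\widetilde X,F_{\widetilde X}+B_{\widetilde X})$ is $\epsilon/N$-lc near $F_{\widetilde X}$. You avoid citing the quantitative form of inversion of adjunction as a black box: instead you exploit the crepant degree-$N$ cyclic quotient $\widetilde\phi\colon\widetilde X\to\widetilde X'$ already constructed in the proof of Lemma~\ref{exceptional-vs-quotient}, observe that $\widetilde X'$ is a locally trivial $\mathbb A^1$-bundle over $E$ so that $(\widetilde X',F_{\widetilde X'}+\Delta_{\widetilde X'}+B_{\widetilde X'})$ is Zariski-locally the product of an $\epsilon/N$-lc pair with $(\mathbb A^1,\{0\})$, compute log discrepancies of the product directly, and then transport the bound back along $\widetilde\phi$ using that a crepant finite morphism scales log discrepancies by ramification indices $\ge 1$. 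This is essentially a hands-on proof of the needed instance of quantitative inversion of adjunction, following the same mechanism (pass to an index-one cover where the reduced component is Cartier). What the paper's route buys is brevity; what yours buys is self-containedness and transparency, since the quantitative statement ``$\epsilon$-lc on the divisor implies $\epsilon$-lc nearby'' is stronger than the klt/plt version usually labelled inversion of adjunction and deserves either a citation or exactly the kind of computation you give. Two small remarks: the ``further blow-ups'' clause in your $\mathbb A^1$-bundle lemma is not really needed, since $\pi\times\mathrm{id}$ is already a log resolution of the product pair and one reads off the mld there by the standard formula for log smooth pairs with boundary coefficients $\le 1$; and your argument (like the paper's) actually produces the non-strict bound $a_D\ge\epsilon/N$ rather than the strict one the definition of $\epsilon/N$-plt literally asks for, so the conclusion should be read in the spirit of ``$\epsilon/N$-lc away from $F_{\widetilde X}$''.
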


\begin{proof}
By Proposition~\ref{singularities-base}, the log Fano quotient $(E,\Delta_E+B_E)$ is 
$\epsilon/N$-lc, and by Lemma~\ref{exceptional-vs-quotient},
$$(K_{\widetilde{X}}+F_{\widetilde{X}}+B_{\widetilde{X}})|_{F_{\widetilde{X}}}= K_E+\Delta_E+B_E,$$ 
hence it is $\epsilon/N$-lc. By inversion of adjunction, we conclude that $(\widetilde{X},F_{\widetilde{X}}+B_{\widetilde{X}})$ is $\epsilon/N$-lc.
\end{proof}

The following Lemma is the claim in Step 6 of the proof of~\cite[Theorem 1]{Mor18b}.

\begin{lemma}\label{bounding-family-cone-singularities}
Let $n$ and $N$ be two positive integers, and $\epsilon$ be a positive real number. Let $x_i \in X_i$ be a sequence of cone singularities contained in $\mathcal{C}_{n}(\epsilon,N)$. Up to passing to a subsequence, we can find a morphism $\mathcal{E}\rightarrow S$, a divisor $\mathcal{D}$ on $\mathcal{E}$, and a sequence of closed points $s_i\in S$, such that 
\[
X_i \simeq {\rm Spec}\left(
\bigoplus_{k \geq 0}H^0(\mathcal{E}_{s_i}, k \mathcal{D}_{s_i})
\right)
\]
holds for each $i$.
\end{lemma}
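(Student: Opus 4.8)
The plan is to realize each $X_i$ as the affine cone over its log Fano quotient equipped with the intrinsic $\qq$-polarization, to bound both the quotient and the polarization, and then to collect them, after passing to a subsequence, into a single bounded family of polarized pairs from which the $X_i$ are recovered by Demazure's construction. Since the conclusion only involves the underlying varieties, and since replacing $B_i$ by $0$ leaves the quotient $E_i$, the $\qq$-polarization $D_{E_i}$ and the divisor $\Delta_{E_i}$ unchanged (all three depend only on the variety $X_i$) while preserving ``$\qq$-Gorenstein $\epsilon$-lc cone singularity with isotropies bounded by $N$'', I would first reduce to $B_i=0$. Then, by Theorem~\ref{demazure-isomorphism-with-boundary} and Lemma~\ref{exceptional-vs-quotient},
\[
X_i\simeq{\rm Spec}\Big(\bigoplus_{k\ge 0}H^0(E_i,kD_{E_i})\Big),\qquad E_i=F_{\widetilde{X}_i},
\]
and the graded ring on the right is unchanged, up to a canonical graded isomorphism, under replacing $D_{E_i}$ by a $\qq$-linearly equivalent divisor (each graded piece gets rescaled by a power of the relevant rational function). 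Hence it suffices to produce, after passing to a subsequence, a morphism $\mathcal{E}\to S$ of varieties of finite type, a $\qq$-divisor $\mathcal{D}$ on $\mathcal{E}$, and closed points $s_i\in S$ with $(E_i,D_{E_i})\simeq(\mathcal{E}_{s_i},\mathcal{D}_{s_i})$ up to $\qq$-linear equivalence of the polarizations.

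To bound the quotient, note that by Proposition~\ref{singularities-base} the log Fano quotient $(E_i,\Delta_{E_i})$ is $\epsilon/N$-log Fano. The divisor $ND_{E_i}$ is Cartier (Proposition~\ref{isotropy-control}), so each $q_Z$ divides $N$ and the nonzero coefficients of $\Delta_{E_i}$ are of the form $1-1/q_Z\ge 1/2$; also $-(K_{E_i}+\Delta_{E_i})$ is ample, hence pseudoeffective. Therefore $(E_i,\Delta_{E_i})$ is log bounded by Proposition~\ref{bounding-log-pairs-BAB}; fix such a log bounded family together with a relatively very ample divisor $\mathcal{A}$. To bound the polarization, observe that since $x_i\in X_i$ is $\epsilon$-lc, Proposition~\ref{angle-vs-log-discrepancy} gives
\[
r_i:=r(x_i,X_i,0)=\frac{1}{a_{F_{\widetilde{X}_i}}(X_i,0)}\le\frac{1}{\epsilon},
\]
and, since blowing up the vertex is an $\epsilon/N$-plt blow-up (Corollary~\ref{blow-up-vertex}), the argument of Lemma~\ref{lem:cartier-index-ky+e} shows that $a_{F_{\widetilde{X}_i}}(X_i,0)$, hence $r_i$, lies in a finite set depending only on $n,\epsilon,N$. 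By the definition of the Fano angle, $D_{E_i}\sim_{\qq}-r_i(K_{E_i}+\Delta_{E_i})$, so
\[
ND_{E_i}\cdot\mathcal{A}^{n-2}=Nr_i\big(-(K_{E_i}+\Delta_{E_i})\big)\cdot\mathcal{A}^{n-2}\le\frac{N}{\epsilon}\,(-K_{E_i})\cdot\mathcal{A}^{n-2},
\]
which is bounded over the bounded family.

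Now I would assemble the family. Passing to a subsequence, we may assume all the $r_i$ equal a single rational number $r$ and all the $(E_i,\Delta_{E_i})$ occur as fibers of one log bounded family $(\mathcal{E},\Delta_{\mathcal{E}})\to S$ over an irreducible base, along which (after stratifying $S$) $K_{\mathcal{E}/S}+\Delta_{\mathcal{E}}$ is relatively $\qq$-Cartier. Put $\mathcal{D}:=-r(K_{\mathcal{E}/S}+\Delta_{\mathcal{E}})$. For the closed points $s_i\in S$ with $(E_i,\Delta_{E_i})\simeq(\mathcal{E}_{s_i},(\Delta_{\mathcal{E}})_{s_i})$ we then get $\mathcal{D}_{s_i}\sim_{\qq}-r(K_{E_i}+\Delta_{E_i})\sim_{\qq}D_{E_i}$, so by the first paragraph
\[
X_i\simeq{\rm Spec}\Big(\bigoplus_{k\ge 0}H^0(E_i,kD_{E_i})\Big)\simeq{\rm Spec}\Big(\bigoplus_{k\ge 0}H^0(\mathcal{E}_{s_i},k\mathcal{D}_{s_i})\Big),
\]
as required.

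I expect the main obstacle to be this last packaging step. The substantive input is that the $\epsilon$-lc hypothesis bounds the Fano angle and hence the degree of the $\qq$-polarization, after which one only needs ``bounded quotient plus bounded polarization''; but converting this into a genuine family requires some care with $\qq$-divisors that need not be effective, with the stratifications producing a relatively $\qq$-Cartier model, and with the (elementary but essential) fact that the cone depends only on the $\qq$-linear equivalence class of the polarization. An alternative for this step, avoiding the passage to finitely many Fano angles, is to place the line bundles $\mathcal{O}_{E_i}(ND_{E_i})$ directly inside the relative Picard scheme of the bounded family of the $E_i$, whose bounded-degree locus is of finite type, and to reconstruct the graded ring from $\mathcal{O}_{E_i}(ND_{E_i})$ together with the finitely many rank-one reflexive sheaves $\mathcal{O}_{E_i}(\lfloor jD_{E_i}\rfloor)$ for $0\le j<N$.
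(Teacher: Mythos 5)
The first part of your argument --- the reduction to $B_i=0$, bounding the log Fano quotient via Proposition~\ref{bounding-log-pairs-BAB}, and bounding the Fano angle via $\epsilon$-lcness --- is sound and reflects the spirit of the reference the paper cites for this lemma (Step~6 of~\cite[Theorem~1]{Mor18b}); the paper itself gives no independent proof. One small inaccuracy: isotropy $\leq N$ at each point gives Cartier index $\leq N$ of $D_E$ at each point of $E$, so the divisor that is guaranteed Cartier is $\lcm(1,\dots,N)\,D_E$, not necessarily $N D_E$.

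The assembly step, however, has a genuine gap. You assert that $\bigoplus_{k\geq 0}H^0(E,\lfloor kD\rfloor)$ is unchanged when $D$ is replaced by a $\mathbb Q$-linearly equivalent divisor, and use this to set $\mathcal D:=-r(K_{\mathcal E/S}+\Delta_{\mathcal E})$. This invariance is false: the Demazure cone is invariant only under $D\mapsto D+\mathrm{div}(g)$ for $g$ a rational function. Your parenthetical ("rescale by a power of the relevant rational function") already betrays the issue: if $D'-D=\frac1m\mathrm{div}(f)$ with $m>1$, then $f^{k/m}$ is not a rational function unless $m\mid k$, and $\lfloor kD'\rfloor$ and $\lfloor kD\rfloor$ need not be linearly equivalent. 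Concretely, on $E=\mathbb P^1$ the divisors $D=\frac12\{0\}+\frac12\{\infty\}$ and $D'=\{0\}$ are $\mathbb Q$-linearly equivalent, yet the cone over $D'$ is $\mathbb A^2$ while the cone over $D$ is the $A_3$ singularity $\{vw=u^4\}$ (with $u$ in degree $1$, $v,w$ in degree $2$). In the situation at hand, equation~\eqref{equation-on-Y} only yields $D_E\sim_{\mathbb Q}-r(K_E+\Delta_E)$ with a defect that is $\mathbb Q$-principal but generally not an integral principal divisor, so $D_{E_i}$ cannot be normalized to equal $-r(K_{E_i}+\Delta_{E_i})$ and your $\mathcal D_{s_i}$ need not reconstruct $X_i$. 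The alternative you sketch at the end --- bounding $\mathcal O_{E_i}(N'D_{E_i})$ in the relative Picard scheme and tracking the finitely many reflexive sheaves $\mathcal O_{E_i}(\lfloor jD_{E_i}\rfloor)$, $0\leq j<N'$, using that the fractional part of $D_{E_i}$ along the bounded support of $\Delta_{E_i}$ ranges over a finite set --- is the correct route, but it must actually be carried out: what needs to be matched with a fiber of a relative $\mathbb Q$-divisor is $D_{E_i}$ up to adding an \emph{integral} principal divisor, not merely up to $\mathbb Q$-linear equivalence.
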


\begin{lemma}\label{lem:log-bounded-cones}
Let $n$ and $N$ be two positive integers, and $\epsilon$ and $c$ be two positive real numbers. 
Then the set of klt pairs $(X,B)$ such that
\begin{itemize}
    \item $X\in \mathcal{C}_{n}(\epsilon,N)$ with vertex $x\in X$
    \item the coefficients of $B$ are at least $c$ near $x$, and
    \item the blow-up of the vertex is a plt blow-up for $(X,B)$,
\end{itemize}
is log bounded near a neighborhood of $x\in X$.
\end{lemma}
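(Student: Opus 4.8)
The plan is to combine the boundedness of the family of cone singularities $\mathcal{C}_n(\epsilon,N)$ (Lemma~\ref{bounding-family-cone-singularities}) with a bound on the coefficients appearing in the boundary $B$, and then invoke Alexeev's log boundedness criterion as in Proposition~\ref{bounding-log-pairs-BAB}. First I would set up the family: by Lemma~\ref{bounding-family-cone-singularities}, after passing to a subsequence of any given sequence $x_i \in (X_i,B_i)$ of pairs satisfying the hypotheses, there is a morphism $\mathcal{E}\rightarrow S$, a divisor $\mathcal{D}$ on $\mathcal{E}$, and closed points $s_i\in S$ with $X_i \simeq \Spec\bigl(\bigoplus_{k\geq 0}H^0(\mathcal{E}_{s_i},k\mathcal{D}_{s_i})\bigr)$; so each $X_i$ is the cone over $(E_i,D_{E_i}):=(\mathcal{E}_{s_i},\mathcal{D}_{s_i})$, and the $E_i$ themselves lie in a bounded family. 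Since the blow-up of the vertex is a plt blow-up for $(X_i,B_i)$, Corollary~\ref{blow-up-vertex} together with the identification $F_{\widetilde{X_i}}\simeq E_i$ of the remark above shows that $(K_{\widetilde{X_i}}+F_{\widetilde{X_i}}+B_{\widetilde{X_i}})|_{F_{\widetilde{X_i}}} = K_{E_i}+\Delta_{E_i}+B_{E_i}$, and this pair is $\epsilon/N$-lc; moreover $-(K_{E_i}+\Delta_{E_i}+B_{E_i})$ is ample by equation~\eqref{equation-on-Y} in the proof of Proposition~\ref{singularities-base}. In particular $(E_i,\Delta_{E_i}+B_{E_i})$ is $\epsilon/N$-log Fano, so the $E_i$ and the divisors $\Delta_{E_i}$ are log bounded by Proposition~\ref{bounding-log-pairs-BAB} (or directly by boundedness of $\mathcal{C}_n(\epsilon,N)$).

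Next I would pin down the coefficients of $B_{E_i}$. The divisor $B_i$ is the cone over $B_{E_i}$, and by the coefficient-comparison formula~\eqref{comparison-log-discrepancies} (with $w_F=1$ for a divisor $F$ lying over the generic point of a component of $B_{E_i}$, since the $\qq$-polarization has Weil index $1$ there away from the support of $\Delta_{E_i}$, after possibly refining), the coefficient of a component $Z$ in $B_{E_i}$ equals the coefficient of the corresponding component of $B_i$ up to a factor coming from the Weil index of $D_{E_i}$ along $Z$, which is at most $N$. Hence if the coefficients of $B_i$ are at least $c$ near $x_i$, the non-zero coefficients of $B_{E_i}$ are at least $c/N$ — in particular bounded below by a fixed positive constant. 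Combining this with the volume bound $-(K_{E_i}+\Delta_{E_i})\cdot A_{E_i}^{n-1}\leq C$ coming from boundedness of the $\epsilon/N$-log Fano pairs $(E_i,\Delta_{E_i})$ (here $A_{E_i}$ a very ample divisor with bounded top self-intersection, as in the proof of Proposition~\ref{bounding-log-pairs-BAB}), and using $-(K_{E_i}+\Delta_{E_i}+B_{E_i})$ ample so that $B_{E_i}\cdot A_{E_i}^{n-1}\leq -(K_{E_i}+\Delta_{E_i})\cdot A_{E_i}^{n-1}$, I get $\Supp(\Delta_{E_i}+B_{E_i})\cdot A_{E_i}^{n-1}\leq (N/c)\cdot C$. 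So by~\cite[Lemma 3.7]{Ale94} the pairs $(E_i,\Delta_{E_i}+B_{E_i})$ are log bounded.

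Finally I would lift log boundedness of the bases back up to the cones, working in a neighborhood of the vertex. Given the log bounded family of triples $(E_i, D_{E_i}; B_{E_i})$ — noting $D_{E_i}$ has bounded Cartier index, namely $N$ by Proposition~\ref{isotropy-control} — the cone construction $X_i=\Spec\bigl(\bigoplus_k H^0(E_i,kD_{E_i})\bigr)$ together with its boundary $B_i=\overline{\rho^*(B_{E_i})}$ can be carried out in families over the (finite-type) base parametrizing the triples, exactly as in the proof of Lemma~\ref{bounding-family-cone-singularities}; this produces a family $(\mathcal{X},\mathcal{B})\rightarrow T$ with $T$ of finite type realizing every $(X_i,\Supp B_i)$ near the vertex as a fiber. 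I expect the main obstacle to be the bookkeeping in this last step: making sure the total space $\mathcal{X}$ and the boundary $\mathcal{B}$ are genuinely of finite type and that the isomorphisms are realized in a Zariski (not merely étale or analytic) neighborhood of the vertex — this is where one must use that $D_{E_i}$ has bounded Cartier index $N$ so that the relevant section rings are generated in bounded degree, hence the $\Spec$ construction is uniform. A secondary subtlety is the precise relation between $\coeff_Z B_{E_i}$ and $\coeff_{\bar Z} B_i$ via the Weil index; one should cite~\cite[Proposition 3.14]{PS11} and~\cite[Theorem 2.8]{Wat81} carefully to ensure the lower bound $c/N$ is correct, or absorb the constant into the statement.
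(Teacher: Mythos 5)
Your proposal is correct and follows essentially the same route as the paper: reduce to sequences via~\cite[Theorem 3.10]{Ale94}, use Corollary~\ref{blow-up-vertex} and Proposition~\ref{bounding-log-pairs-BAB} to get log boundedness of the log Fano quotients $(E_i,\Delta_{E_i}+B_{E_i})$, pull in Lemma~\ref{bounding-family-cone-singularities} to put the $E_i$ in a family $\mathcal{E}\to S$, and then recover $(X_i,\Supp B_i)$ as fibers of the relative cone $\Spec\bigl(\bigoplus_k H^0(\mathcal{E}/S,k\mathcal{D})\bigr)$. The one place where you are more explicit than the paper is in verifying the lower bound $c/N$ on the coefficients of $B_{E_i}$ via the Weil index of the $\qq$-polarization, which is a genuine hypothesis of Proposition~\ref{bounding-log-pairs-BAB} that the paper applies without comment; this is a useful addition, not a deviation.
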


\begin{proof}
By~\cite[Theorem 3.10]{Ale94}, it suffices to show that every sequence $x_i\in (X_i,B_i)$ as in the statement, contains a subsequence which is log bounded.
Let $\pi_i \colon \widetilde{X}_i \rightarrow X_i$ be the blow-up of $x_i$. 
By Lemma~\ref{blow-up-vertex}, $\pi_i$ is an $\epsilon/N$-plt blow-up and $-(K_{\widetilde{X}_i}+E_i+B_{\widetilde{X}_i})$ is an ample divisor over $X_i$, where $E_i$ is the unique exceptional divisor of $\pi_i$, and $B_{\widetilde{X}_i})$ is the strict transform of $B_i$ on $\widetilde{X}_i$. By Proposition~\ref{bounding-log-pairs-BAB}, the log Fano quotient $(E_i,\Delta_{E_i}+B_{E_i})$ is log bounded.
By Lemma~\ref{exceptional-vs-quotient}, $E_i$ is isomorphic to the Chow quotient of $X_i$ for the torus action.
By Lemma~\ref{bounding-family-cone-singularities}, we can find a morphism
$\mathcal{E}\rightarrow S$, a divisor
$\mathcal{D}\subset \mathcal{E}$, and a sequence $s_i \in S$, such that
\[
X_i \simeq {\rm Spec} \left( 
\bigoplus_{k\geq 0}H^0(\mathcal{E}_{s_i}, k\mathcal{D}_{s_i})
\right)
\]
for each $i$. Since $\mathcal{E}_i \simeq E_{s_i}$, there is a boundary divisor $\mathcal{B}_{\mathcal{E}} \subset \mathcal{E}$ such that $\Supp( B_{E_i} ) \subset \Supp(\mathcal{B}_{\mathcal{E},s_i})$ for each $i$, with the identification given by the above isomorphism. 
Indeed, the above follows from the log boundedness of the pairs $(E_i,\Delta_{E_i}+B_{E_i})$.
Let 
\[
\mathcal{X}:= {\rm Spec}\left( 
\bigoplus_{k\geq 0}H^0(\mathcal{E}/S,
k\mathcal{D})
\right)
\]
and $\widetilde{\mathcal{X}}$ the relative spectrum of the divisorial sheaf
$\bigoplus_{k\geq 0} \mathcal{O}_{\mathcal{E}}(k\mathcal{D})$ over $\mathcal{E}$.
Observe that we have a good quotient
$\widetilde{\mathcal{X}}\rightarrow \mathcal{E}$ for the torus action and a birational contraction $\widetilde{\mathcal{X}} \rightarrow \mathcal{X}$.
Let $\mathcal{B}$ be the push-forward to $\mathcal{X}$ of the pull-back of $\mathcal{B}_{\mathcal{E}}$ to $\widetilde{\mathcal{X}}$.
Possibly passing to a subsequence, we have that 
$\mathcal{X}_{s_i} \simeq X_i$ and 
$\Supp(B_i)\subset \Supp(\mathcal{B}_{s_i})$ for all $i$.
Therefore, the morphism $\mathcal{X}\rightarrow V$ and the $\mathbb R$-divisor
$\mathcal{B}\subset \mathcal{X}$ is a corresponding log bounded family for the log pairs $(X_i,B_i)$.
\end{proof}

\subsection{Deformation to cone singularities}
In this subsection, we recall the degeneration of an 
$(\epsilon,\delta)$-lc singularity to a 
lc cone singularity (see, e.g.~\cite{LX16,LX17}).
Part of the following proposition is already proved in~\cite[2.4]{LX16}.

\begin{proposition}\label{prop:flat-deformation}
Let $\pi \colon Y \rightarrow X$ be a plt blow-up of the pair $(X,B)$ at $x\in X$ with exceptional divisor $E$ such that $mE$ is Cartier.
Then the pair $(X,B)$ deforms to a pair $(X_0,B_0)$ and the following properties hold
\begin{enumerate}
    \item $(X_0,B_0)$ is a cone singularity,
    \item there exists a $\kk$-equivariant cyclic quotient $\phi: X_0 \rightarrow X_0'$ of degree $m$, such that $X_0'$ is the cone singularity with associated triple $(E,-mE|_E;0)$,
    \item we have $\phi^*(K_{X_0'}+\Delta_0'+B_0')=K_{X_0}+B_0$, where $\Delta_0'$ (resp. $B_0'$) is the cone over $\Delta_E$ (resp. $B_E)$, and
    \item the blow-up of the vertex of $X_0$ is a plt blow-up for $(X_0,B_0)$.
\end{enumerate}
\end{proposition}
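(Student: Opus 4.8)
The plan is to realise $(X_0,B_0)$ as the central fibre of the extended Rees degeneration of $X$ along the divisorial valuation $v:=\operatorname{ord}_E$, and then to deduce (1)--(4) from the theory of cone singularities developed above (and from~\cite[2.4]{LX16}, which already contains part of this). Shrinking $X$ I may assume $X=\operatorname{Spec}R$ is affine; let $\mathfrak a_k:=\pi_*\mathcal O_Y(-kE)=\{f\in R:v(f)\ge k\}$ for $k\ge 0$, and $\mathfrak a_k:=\mathcal O_X$ for $k\le 0$, be the valuation ideals of $v$. Since $mE$ is Cartier and $-E$ is $\pi$-ample, $\bigoplus_{j\ge 0}\pi_*\mathcal O_Y(-jmE)$ is a finitely generated $\mathcal O_X$-algebra, and because $\mathcal O_Y(-iE)\otimes\mathcal O_Y(-jmE)=\mathcal O_Y(-(i+jm)E)$ for $0\le i<m$ the ring $\bigoplus_{k\ge 0}\mathfrak a_k$ is a finite module over it; hence $\{\mathfrak a_k\}$ is a Noetherian filtration and the extended Rees algebra $\mathcal R:=\bigoplus_{k\in\mathbb Z}\mathfrak a_k s^{k}\subseteq R[s,s^{-1}]$ is a finitely generated $\mathbb K[t]$-algebra, $t:=s^{-1}$. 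Set $\mathcal X:=\operatorname{Spec}\mathcal R$ with the natural morphism $\mathcal X\to\mathbb A^1=\operatorname{Spec}\mathbb K[t]$: it is flat ($\mathcal R$ is a domain containing $\mathbb K[t]$), inverting $s$ identifies $\mathcal X|_{t\ne 0}$ with $X\times(\mathbb A^1\setminus\{0\})$ so every fibre over $t\ne 0$ is $X$, and $\mathcal R/t\mathcal R=\operatorname{gr}_vR:=\bigoplus_{k\ge 0}\mathfrak a_k/\mathfrak a_{k+1}$, so the fibre over $0$ is $X_0:=\operatorname{Spec}(\operatorname{gr}_vR)$. Taking $\mathcal B\subseteq\mathcal X$ to be the closure of $B\times(\mathbb A^1\setminus\{0\})$ gives a flat family over $\mathbb A^1$ whose central fibre $B_0$ is the $\mathbb K$-invariant flat limit of $B$, the cone over the trace $B_E:=B_Y|_E$ of $B$ on $E$; so $(X,B)$ deforms to $(X_0,B_0)$.

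The $\mathbb Z_{\ge 0}$-grading of $\operatorname{gr}_vR$, whose degree-$0$ part is $R/\mathfrak a_1=\mathbb K$, equips $X_0$ with a $\mathbb K$-action having a unique fixed point $x_0$ such that all orbit closures contain $x_0$; granting that $\operatorname{gr}_vR$ is normal, $(X_0,B_0)$ is then a cone singularity at $x_0$, which is (1). To identify $X_0$ --- the main technical point, essentially~\cite[2.4]{LX16} --- I would apply $\pi_*$ to the sequences $0\to\mathcal O_Y(-(k+1)E)\to\mathcal O_Y(-kE)\to\mathcal Q_k\to 0$ (with $\mathcal Q_k$ a coherent sheaf on $E$) and use the relative vanishing $R^1\pi_*\mathcal O_Y(-kE)=0$ for $k\ge 0$ --- which holds by Kawamata--Viehweg, since for small $\eta>0$ the pair $(Y,B_Y+(1-\eta)E)$ is klt near $E$ and $-kE-(K_Y+B_Y+(1-\eta)E)\equiv_\pi-(k+a_E(X,B)-\eta)E$ is $\pi$-ample --- to get $\mathfrak a_k/\mathfrak a_{k+1}\cong H^0\!\big(E,\mathcal O_E(\lfloor -kE|_E\rfloor)\big)$. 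Hence $\operatorname{gr}_vR$ is the section ring of the ample $\mathbb Q$-Cartier $\mathbb Q$-divisor $D_E:=-E|_E$, so $X_0$ is the cone singularity with associated triple $(E,D_E;B_E)$, whose denominator divisor $\Delta_E=\sum_Z\big(1-\tfrac{1}{q_Z}\big)Z$ equals $\operatorname{Diff}_E(0)$ (the local index $q_Z$ of $E$ along $Z$ being both the denominator of $E|_E$ at $Z$ and the index in the different, as $mE$ is Cartier and $(K_Y+E)|_E=K_E+\operatorname{Diff}_E(0)$). Since $mD_E=-mE|_E$ is Cartier, the $m$-th Veronese subalgebra $\bigoplus_{k\ge 0}H^0(E,-kmE|_E)$ of $\operatorname{gr}_vR$ is its subring of $\mu_m\subset\mathbb K^*$-invariants, and by Theorem~\ref{demazure-isomorphism-with-boundary} its spectrum is the cone singularity $X_0'$ with associated triple $(E,-mE|_E;0)$; the induced $\mathbb K^*$-equivariant morphism $\phi\colon X_0\to X_0'$ is the quotient by $\mu_m$, i.e.\ a cyclic quotient of degree $m$, which is (2).

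For (3), on $X_0\setminus\{x_0\}$ the good quotient $\rho$ factors as $\rho'\circ\phi$ through $\rho'\colon X_0'\setminus\{x_0'\}\to E$, so $\phi^*B_0'=B_0$ for the cones $B_0,B_0'$ over $B_E$; and the ramification formula for the cyclic quotient $\phi$ --- whose branch divisor is the cone $\Delta_0'$ over $\sum_Z(1-\tfrac{1}{q_Z})Z=\Delta_E$ --- gives $\phi^*(K_{X_0'}+\Delta_0')=K_{X_0}$, exactly as in the proof of Lemma~\ref{exceptional-vs-quotient} via~\cite[Proposition 3.14]{PS11} and~\cite[Theorem 2.8]{Wat81}; summing, $\phi^*(K_{X_0'}+\Delta_0'+B_0')=K_{X_0}+B_0$. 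For (4), blow up the vertex $\sigma\colon\widetilde{X_0}\to X_0$: by Lemma~\ref{exceptional-vs-quotient} its exceptional divisor is $F\cong E$ with $(K_{\widetilde{X_0}}+F+B_{0,\widetilde{X_0}})|_F=K_E+\Delta_E+B_E$, which is klt since it is the log Fano quotient of $(X_0,B_0)$; by inversion of adjunction $(\widetilde{X_0},F+B_{0,\widetilde{X_0}})$ is plt near $F$, $-F$ is ample over $X_0$, and $F$ is the only $\sigma$-exceptional divisor, so $\sigma$ is a plt blow-up of $(X_0,B_0)$ (in particular $(X_0,B_0)$ is klt).

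The step I expect to be the real obstacle is the identification of $\operatorname{gr}_vR$ in the second paragraph: besides the relative vanishing, one must show that $\mathcal Q_k$ is torsion-free on $E$ and equal to the rank-one reflexive sheaf $\mathcal O_E(\lfloor -kE|_E\rfloor)$ --- equivalently, that $\operatorname{gr}_vR$ is normal and coincides with the section ring of $D_E$ --- which requires a local analysis of the cyclic quotient singularities of $Y$ along $E$. One may instead quote~\cite[2.4]{LX16} for this.
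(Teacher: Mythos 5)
Your proposal follows essentially the same route as the paper: realize $(X_0,B_0)$ as the central fibre of the extended Rees degeneration along $v=\operatorname{ord}_E$, identify the $m$-th Veronese subring of $\operatorname{gr}_v R$ with $\bigoplus_{k\ge 0}H^0(E,-mkE|_E)$ via a relative vanishing applied to the ideal-sheaf sequences on $Y$, obtain $\phi\colon X_0\to X_0'$ from the Veronese inclusion, and deduce (3) from the ramification formula and (4) from the $\mathbb{A}^1$-bundle structure of $\widetilde{X}_0'\to E$ together with inversion of adjunction (or Lemma~\ref{exceptional-vs-quotient}, which amounts to the same). Your minor deviations are fine and in two places slightly tighter than the paper's write-up: you supply a direct argument for flatness (the Noetherian filtration plus the domain containing $\kk[t]$) where the paper cites Teissier, and you invoke relative Kawamata--Viehweg for the klt pair $(Y,B_Y+(1-\eta)E)$ where the paper appeals, somewhat loosely, to Grauert--Riemenschneider.

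The one substantive remark is about the point you flag yourself. The paper never needs $\mathfrak a_k/\mathfrak a_{k+1}\cong H^0(E,\mathcal O_E(\lfloor -kE|_E\rfloor))$ for all $k$; it only runs the ideal-sheaf sequence at the Cartier multiples $mk$, where the quotient is the invertible sheaf $\mathcal O_E(-mkE|_E)$ and there is nothing to check about torsion or reflexivity. That is exactly why statement (2) is formulated in terms of $X_0'$ rather than $X_0$: it lets the paper sidestep the identification of $\operatorname{gr}_vR$ itself as the section ring of $-E|_E$, which is the delicate step you describe (and which you rightly propose to outsource to~\cite[2.4]{LX16}). Your stronger identification is not wrong, and it is the natural thing to want, but note that the normality of $\operatorname{gr}_v R$ is needed even for (1) as the paper defines cone singularities; this is implicit in the paper's appeal to~\cite{LX16,Tei03} and you are right that a self-contained proof would have to address it.
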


\begin{proof}
Let $v={\rm ord}_E$ be the valuation over $x\in X$
corresponding to the exceptional divisor of $\pi$.
Possibly shrinking $X$ to a neighborhood of $x$, we may assume $X$ is affine. Let $X={\rm Spec}(A)$, we consider the extended Rees algebra:
\[
\mathcal{R}:= \bigoplus_{k \in \zz}a_k(v)t^{-k}\subset A[t,t^{-1}],
\]
where $a_k(v):= \{ f\in R \mid v(f)\geq k\}$.
By~\cite[Proposition 2.3]{Tei03}, we know that 
$\mathcal{R}$ is faithfully flat over $\kk[t]$, so there is a deformation $\mathcal{X} \rightarrow \mathbb{A}^1$ which central fiber isomorphic to ${\rm gr}_v(R)$, 
and $\mathcal{X}_t \simeq X$ for all $t\in \mathbb{A}^1$.
Let $\mathcal{B}\subset \mathcal{X}$ be 
the strict transform of $B\times \mathbb{A}^1$ with respect to the birational morphism $\mathcal{X}\dashrightarrow X\times \mathbb{A}^1$.
Since $\mathcal{X}\dashrightarrow X\times \mathbb{A}^1$ is equivariant with respect to the torus action, we conclude that $(\mathcal{X},\mathcal{B}) \rightarrow \mathbb{A}^1$ is a deformation of pairs whose central fiber $(X_0,B_0)$ is a cone singularity, proving (1).

We write 
\[
A = {\rm gr}_v(R) = \sum_{k\geq 0} a_k(v)/a_{k+1}(v) = \bigoplus_{k\geq 0} A_k,
\]
and 
\[
A^{(m)} = \sum_{k\geq 0} a_{mk}(v)/a_{mk+1}(v) = \bigoplus_{k\geq 0}A_{mk}.
\]
For any positive integer $k$ we have an exact sequence
\[
0 \rightarrow \mathcal{O}_Y(-(mk+1)E) \rightarrow
\mathcal{O}_Y(-mkE) \rightarrow \mathcal{O}_E(-mkE)
\rightarrow  0.
\]
By Grauert-Riemenschneider theorem we know that
$h^1(Y, \mathcal{O}_Y(-mk+1(E)))=0$, hecenforth
\[
A_{mk}:=\frac{a_{mk}(v)}{a_{mk+1}(v)} = 
\frac{ \pi_* \mathcal{O}_Y(-mkE)}{ \pi_*\mathcal{O}_Y(-(mk+1)E)} =
\frac{H^0(Y,\mathcal{O}_Y(-mkE))}{H^0(Y,\mathcal{O}_Y(-(mk+1)E))} \simeq 
H^0( E, \mathcal{O}_E(-mkE|_E)).
\]
Since $A_0 \simeq \kk$, we conclude that 
\[
\Spec(A^{(m)}) \simeq 
{\rm Spec}\left(
\oplus_{k \geq 0}
H^0(E, \mathcal{O}_E(-mkE|_E))
\right),
\]
proving (2).

 The equivariant finite morphism 
$\phi\colon X_0 \rightarrow X_0'$ is induced by the Veronese embedding
$A^{(m)} \hookrightarrow A$. Henceforth, $\phi$ has degree $m$, and the equality
\[
\phi^*(K_{X_0'}+\Delta_0'+B_0')=K_{X_0}+B_0,
\]
follows from Hurwitz formula and the definition of log Fano quotient, proving (3).

Finally, observe that we have an induced finite morphism
$\widetilde{\phi}\colon \widetilde{X}_0 \rightarrow \widetilde{X}_0'$ which induces a commutative diagram
\begin{equation}\label{diagram-blow-up-deformation}\nonumber
 \xymatrix{
 \widetilde{X}_0 \ar[r]^-{\widetilde{\phi}}\ar[d] & \widetilde{X}_0'\ar[d]\\
 X_0\ar[r]^-{\phi} & X_0' \\
 }
\end{equation}
where $\widetilde{X}_0 \rightarrow X_0$ and $\widetilde{X}_0' \rightarrow X_0'$
are the blow-ups at the vertices $x_0 \in X_0$ and $x_0' \in X_0'$,
with exceptional divisor $\widetilde{F}_0$ and $\widetilde{F}_0'$.
Since $\widetilde{X}_0'\rightarrow E$ is an $\mathbb{A}^1$-bundle,
we know that 
\[
\left(
\widetilde{X}_0', \widetilde{F}_0'+ \widetilde{\Delta}_0' + \widetilde{B}_0' 
\right)
\]
is plt, where $\widetilde{\Delta}_0'$ (resp. $\widetilde{B}_0'$) is the strict transform of $\Delta_0'$ (resp. $B_0'$) on $\widetilde{X}_0'$.
Thus, $(\widetilde{X}_0, \widetilde{F}_0  +\widetilde{B}_0)$ is plt, where $\widetilde{B}_0$ is the strict transform of $B_0$ on $\widetilde{X}_0$, which concludes the proof of the proposition.
\end{proof}

\section{Examples} 
In this section, we give some examples of torus actions with unbounded isotropies, $(\epsilon,\delta)$-lc singularities such that $\epsilon$ and $\delta$ are not tightly related. Moreover, we construct a sequence of threefold $(\epsilon,\delta)$-lc singularities and show that they are not bounded even in the analytic sense.

\begin{example}{\em 
Given a toric variety $X$ with an action of $(\kk^*)^n$, 
we can take sub-tori $\kk^* \subset (\kk^*)^n$ which act on $X$ 
with arbitrarily large isotropy.
In this case, the corresponding GIT quotient will have singularities whose log discrepancies are not bounded away from zero. For instance, taking a smooth germ $(x_1,\dots, x_n)$ 
and the actions 
\[
t\cdot (x_1,\dots,x_n) = (t^{k_1}x_1,\dots, t^{k_n}x_n),
\]
with $k_1,\dots, k_n$ pairwise coprime numbers, 
the corresponding quotients are all possible weighted projective spaces of dimension $n-1$. Henceforth, a fixed germ may admit $\delta$-plt blow-ups with $\delta$ arbitrarily small.
}
\end{example}

\begin{example}\label{epsilon-zero}{\em 
Let $X_d$ be the cone over a rational curve of degree $d$, i.e.
$X_d$ is the spectrum of 
\[
\bigoplus_{k \geq 0} H^0(\pp^1, \mathcal{O}_{\pp^1}(dH)),
\]
where $H$ is the ample class of a point on $\pp^1$.
Then $X_d$ is lc. Indeed, the blow-up $\pi_d \colon Y_d \rightarrow X_d$ of the maximal ideal 
\[
\texttt{m}_{X_d} := \bigoplus_{k>0} H^0(\pp^1, \mathcal{O}_{\pp^1}(dH)),
\]
extracts a unique exceptional divisor $E_d\simeq \pp^1$. It is clear that $(Y_d,E_d)$ is log smooth, and we can write
\[
\pi_d^*(K_{X_d})= K_{Y_d}+ \left(1 - \frac{2}{d} \right) E_d.
\]
Henceforth, the algebraic variety $X_d$ is $2/d$-lc but not $2/d$-klt. In particular, 
\[
{\rm mld}(X_d)=\frac{2}{d}
\]
converges to $0$. 
}
\end{example}

\begin{example}\label{delta-zero}
{\em 
Consider the surface singularities $A_n$. Although they are canonical singularities, we show that any plt blow-up of an $A_n$ singularity is a $\delta$-plt blow-up with $\delta <2/n$.

Let $\pi_n\colon Y_n \rightarrow A_n$ be a plt blow-up, $E_n$ the unique divisor extracted by $\pi_n$, then
$$K_{E_n}+\Delta_{E_n}:=(K_{Y_n}+E_n)|_{E_n}.$$
Since $E_n$ isomorphic to $\pp^1$ and the dual graph of $A_n$ is a chain of $n$ vertices, we have that 
\[
\Delta_{E_n}=(1-1/a) \{0\} + (1-1/b)\{ \infty\}
\]
where $a,b\in\mathbb N$ and $a+b\geq n+1$,
with equality if and only if $E_n$ is an exceptional divisor of the minimal resolution.
In particular, one of $a,b\geq\frac{n}{2}$, hence
$(E_n,{\Delta}_{E_n})$
is not $\frac{2}{n}$-klt. By inversion of adjunction, $\pi_n$ is not a $\frac{2}{n}$-plt blow-up.
}
\end{example}

Now, we turn to give an example of an unbounded sequence of $(\epsilon,\delta)$-lc singularities.
This example shows that the statement of Theorem~\ref{thm:bounded-surfaces} does not hold in higher dimensions.

\begin{example}\label{unbounded}{\em 
Consider the $A_3$ singularity $\{ x^2+y^2+z^3=0 \} \subset \mathbb{A}^3$, and the non-equisingular deformation of the $A_3$ singularity:
\[
X_0 := \{ x^2+y^2+z^3+z^2w = 0 \} \subset \mathbb{A}^4.
\]
Observe that $X_0$ is a cone singularity with the action given by
\[
t\cdot (x,y,z,w) = (t^3x,t^3y,t^2z,t^2w).
\]
Since the restriction of $X_0$ to $w=0$ and
$w\neq 0$ are both klt varieties, by the inversion of adjunction, $X_0$ is klt.

Thus, $X_0$ is $\epsilon$-lc for some $\epsilon$ positive real number.
We will consider the following sequence of deformations of $X_0$:
\[
\mathcal{X}_n := \{ x^2+y^2+z^3+z^2w+tw^n =0\} \subset \mathbb{A}^5
\rightarrow \mathbb{A}^1_{t},
\]
where $n\geq 4$.
Observe that $X_{n,0}\simeq X_0$ for all $n$. 
The above deformations are equisingular in the sense of~\cite{Wah76}.
Moreover, no two such deformations are equivalent (see, e.g.~\cite[Theorem 9.2]{ Har10}).
By Proposition~\ref{blow-up-vertex}, we know that the blow-up 
$\pi_0 \colon Y_0 \rightarrow X_0$ of the vertex of $X_0$ is a $\delta$-plt blow-up for some $\delta>0$.
We denote the exceptional divisor of $\pi_0$ by $E_0$.
Let $\mathcal{Y}_n\rightarrow \mathbb{A}^1_t$ be the flat deformation induced by blowing-up the ideal $\langle x,y,z,w\rangle$ of $\mathcal{X}_n$.
Then, we have that $K_{\mathcal{Y}_n}+\mathcal{E}_n$ is a $\qq$-Cartier divisor.
Indeed, $\mathcal{E}_n$ is Cartier by construction and 
$\mathcal{Y}_n$ is Gorenstein since it has complete itersection singularities.
Observe that 
\[
(K_{\mathcal{Y}_n}+\mathcal{E}_n)|_{Y_0}=
K_{Y_0}+E_0,
\]
therefore $(K_{\mathcal{Y}_n}+\mathcal{E}_n)|_{Y_t}$ is $\delta$-plt for $t$ general.
Thus, $\mathcal{X}_{n,t}$ is $(\epsilon,\delta)$-lc for $t$ general.
We claim that the sequence $\mathcal{X}_{n,t}$ is not analytically bounded.
Indeed, $\mathcal{X}_{n,t}$ is an isolated hypersurface threefold singularity, hence we can compute its Tjurina number: 
\[
{\rm Tju}(\mathcal{X}_{n,t})=\dim_\kk 
\kk[x,y,z,w]/
\langle I_{\mathcal{X}_{n,t}}, {\rm Jac}_{\mathcal{X}_{n,t}}\rangle= n+2,
\]
which forms a diverging sequence. 
Thus, $\mathcal{X}_{n,t}$ does not belong to an analytically bounded family by the upper semicontinuity of the Tjurina numbers.
}
\end{example}

\section{Bounded deformations for $(\epsilon,\delta)$-lc singularities}\label{sec:proof-bounded-deformations}

In this section, we prove that $(\epsilon,\delta)$-lc singularities of fixed dimension are bounded up to a deformation. We give a more general statement which shows the log boundedness of $(\epsilon,\delta)$-lc pairs of fixed dimension.

\begin{theorem}\label{thm:bounded-deformations-boundary}
Let $n$ be a positive integer, $\epsilon,\delta$ and $c$ three positive real numbers.
Then the set of singularities $x\in (X,B)$ such that
\begin{itemize}
\item $X$ is $n$-dimensional $\qq$-Gorenstein at $x\in X$,
\item $(X,B)$ is $(\epsilon,\delta)$-lc at $x\in X$, and
\item the coefficients of $B$ are at least $c$
\end{itemize}
forms a log bounded family up to deformation.
\end{theorem}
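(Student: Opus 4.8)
The plan is to degenerate every germ $x\in(X,B)$ in the class, uniformly, to a cone singularity lying in one fixed bounded family, and then to conclude by Lemma~\ref{lem:log-bounded-cones}. We may assume $B$ is a $\qq$-divisor (otherwise approximate its coefficients; only $\Supp B$ matters for log boundedness). Shrink $X$ to an affine neighbourhood of $x$ and fix a $\delta$-plt blow-up $\pi\colon Y\to X$ of $(X,B)$ at $x$ with exceptional divisor $E$. Since $B\ge0$, $\pi$ is simultaneously a $\delta$-plt blow-up of the $\epsilon$-lc singularity $x\in X$, so Lemma~\ref{lem:cartier-index-ky+e} supplies a positive integer $m=m(n,\epsilon,\delta)$ such that $mE$ and $m(K_Y+E)$ are Cartier near $E$ and $mK_X$ is Cartier near $x$.

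Next I apply Proposition~\ref{prop:flat-deformation} to $\pi$: it produces a deformation $(\mathcal X,\mathcal B)\to\mathbb A^1$ with $(\mathcal X_t,\mathcal B_t)\simeq(X,B)$ for $t\neq0$, with central fibre a cone singularity $(X_0,B_0)$, a $\kk$-equivariant cyclic quotient $\phi\colon X_0\to X_0'$ of degree $m$ onto the cone $X_0'$ with associated triple $(E,-mE|_E;0)$, a crepant pullback relation $\phi^*(K_{X_0'}+\Delta_0'+B_0')=K_{X_0}+B_0$, and the fact that blowing up the vertex of $X_0$ is a plt blow-up for $(X_0,B_0)$. Running the same proposition for $(X,0)$ — $\pi$ is plt for $(X,0)$ too, and the underlying degeneration $X_0={\rm Spec}\,{\rm gr}_{\ord_E}(\mathcal O_{X,x})$ does not depend on $B$ — writes $K_{X_0}$ as $\phi^*$ of a $\qq$-Cartier divisor, so $X_0$ is $\qq$-Gorenstein. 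Since $mE$ is Cartier near $E$, the polarization $-mE|_E$ of $X_0'$ is Cartier; hence $X_0'$ has trivial isotropies by Proposition~\ref{isotropy-control}, $X_0$ has isotropies $\le m$ by Lemma~\ref{lem:control-isotropies}, and $\Delta_E=\Delta_0'=0$, so the log Fano quotient of $(X_0',B_0')$ is $(E,B_E)$.

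Now I bound the two invariants of $(X_0',B_0')$ needed for Proposition~\ref{prop:control-ld-vertex}. Adjunction for the $\delta$-plt pair $(Y,B_Y+E)$ near $E$ (with $m(K_Y+E)$ Cartier near $E$) shows $(E,B_E)$ is $\delta'$-klt for some $\delta'=\delta'(\delta)$, and ampleness of $-E$ over $X$ gives that $-(K_E+B_E)$ is ample, so $(E,B_E)$ is $\delta'$-log Fano. Restricting $K_Y+B_Y+E=\pi^*(K_X+B)+a_E(X,B)\,E$ to $E$ gives $K_E+B_E\equiv a_E(X,B)\,(E|_E)$ on the Fano $E$, so, as $a_E(X,B)\ge\epsilon$, the Fano angle of $(X_0',B_0')$ is $m/a_E(X,B)\le m/\epsilon=:r_0$. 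Since $X_0'$ also has trivial isotropies, Proposition~\ref{prop:control-ld-vertex} gives that $(X_0',B_0')$ is $\epsilon_0$-lc with $\epsilon_0=\min\{\delta',1/r_0\}$, a constant depending only on $n,\epsilon,\delta$. As $\phi$ is finite and crepant (with $\Delta_0'=0$), each divisor $F$ over $X_0$ satisfies $a_F(X_0,B_0)=r_F\,a_{F'}(X_0',B_0')\ge\epsilon_0$ for its image $F'$ over $X_0'$ and some ramification index $r_F\ge1$; hence $(X_0,B_0)$ is $\epsilon_0$-lc. Flatness forces the coefficients of $B_0$ to lie among those of $B$, hence $\ge c$. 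Therefore $(X_0,B_0)$ lies in $\mathcal C_n(\epsilon_0,m)$, has coefficients $\ge c$, and has a plt blow-up of its vertex; by Lemma~\ref{lem:log-bounded-cones} the family of all such $(X_0,B_0)$ is log bounded near the vertex. Since $m$ and $\epsilon_0$ depend only on $n,\epsilon,\delta$, this is a fixed log bounded family, and the deformations $(\mathcal X,\mathcal B)\to\mathbb A^1$ built above exhibit the original class as log bounded up to deformation.

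I expect the main obstacle to be step three: extracting a uniform lower bound $\epsilon_0$ for the log discrepancies of the central fibre. This rests on (i) the bounded Cartier index $m$ of $E$ on $Y$ (an MMP input, Lemma~\ref{lem:cartier-index-ky+e}), which simultaneously bounds isotropies and forces the clean factorisation through the Cartier-polarized cone $X_0'$ with trivial isotropies; (ii) the Fano-angle bound, which reduces to $a_E(X,B)\ge\epsilon$; and (iii) the uniform klt-ness of $(E,B_E)$, a $\delta$-adjunction statement. The remaining ingredients — existence and flatness of the degeneration, $\qq$-Gorensteinness and plt-ness of the central fibre, the behaviour of log discrepancies under the finite crepant cover $\phi$, and the log boundedness of bounded cone singularities — are furnished by the results recalled above, ultimately by~\cite{Bir16b,Mor18b}.
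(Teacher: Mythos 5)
Your overall strategy — degenerate $(X,B)$ to a cone $(X_0,B_0)$ via Proposition~\ref{prop:flat-deformation}, bound its log discrepancies and isotropies using the cyclic quotient $X_0\to X_0'$ and Proposition~\ref{prop:control-ld-vertex}, then conclude by Lemma~\ref{lem:log-bounded-cones} — is exactly the paper's. You diverge in two useful ways, and there is one claim that is literally false but happens to be harmless.

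The genuine differences: (i) For the Fano-angle bound you use $a_E(X,B)\geq\epsilon$ directly, reading off $r=m/a_E(X,B)\leq m/\epsilon$ from the adjunction identity $K_E+{\rm Diff}_E(B_Y)\sim_{\qq}a_E(X,B)\,E|_E$. The paper instead invokes \cite[Theorem 1, Lemma 2.20]{Mor18a} to place $a_E(X,0)$ in a finite set; your argument is cleaner and makes that appeal superfluous here. (ii) For the coefficient bound on $B_0$ you argue by flatness — each component $\mathcal D_i$ of $\mathcal B$ is irreducible and flat over $\mathbb A^1$, so $\mathcal D_{i,0}$ is an effective cycle with multiplicities $\geq 1$, whence every coefficient of $B_0$ is $\geq c$. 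The paper instead cites \cite[Corollary 3.10]{Sho92} to produce a possibly smaller $c_0=c_0(c,\delta)$; either suffices for Lemma~\ref{lem:log-bounded-cones}. (Your phrasing ``lie among those of $B$'' overstates it — coefficients can grow when components coalesce or acquire multiplicity — but the inequality $\geq c$ is what you use and it is correct.)

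The false claim is ``$\Delta_E=\Delta_0'=0$.'' What the Cartier-ness of $-mE|_E$ gives is that the $\Delta$-divisor attached to the polarization of $X_0'$ vanishes (so $X_0'$ has trivial isotropies); it does \emph{not} give ${\rm Diff}_E(0)=0$, and the paper's $\Delta_0'$ — the cone over ${\rm Diff}_E(0)$ — is generally nonzero (take any $A_k$-type transverse singularity of $Y$ along a curve in $E$). You avoid disaster only because your ``$B_E$'' in the Fano-angle computation is really the full adjunction boundary ${\rm Diff}_E(B_Y)=\Delta_E+B_E$, so you are in fact computing the Fano angle of $(X_0',\Delta_0'+B_0')$ and the crepant transfer $\phi^*(K_{X_0'}+\Delta_0'+B_0')=K_{X_0}+B_0$ then correctly gives $a_F(X_0,B_0)\geq\epsilon_0$. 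This is the pair the paper applies Proposition~\ref{prop:control-ld-vertex} to as well, so the logic is sound — but you should delete the assertion $\Delta_0'=0$ and keep the different visible throughout, both for correctness and because it is exactly what Proposition~\ref{prop:control-ld-vertex} and Lemma~\ref{exceptional-vs-quotient} are tracking.
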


\begin{proof}We use notation in Proposition \ref{prop:flat-deformation}. Let $(X,B)$ be a pair and $x\in X$ as above. First we show that we may assume $B$ is a $\mathbb Q$-divisor: indeed, by continuity of log discrepancies, we may find a $\mathbb Q$-divisor $B'$ on $X$, such that $\Supp B'\supset\Supp B$, all the coefficients of $B$ are at least $\frac{1}{2}c$, such that $(X,B')$ is $(\frac{1}{2}\epsilon,\frac{1}{2}\delta)$-lc at $x$. Possibly replacing $B$ by $B'$, $c,\epsilon,\delta$ by $\frac{1}{2}c,\frac{1}{2}\epsilon$ and $\frac{1}{2}\delta$ respectively, we may assume that $B$ is a $\mathbb Q$-divisor.

Let $\pi \colon Y \rightarrow X$ be a $\delta$-plt blow-up of $(X,B)$ at $x$ and $E$ the unique exceptional divisor of $\pi$. Since $X$ is $\mathbb Q$-Gorenstein at $x$, $\pi$ is also a $\delta$-plt blow-up of $x\in X$. Moreover, $X$ is $\epsilon$-lc at $x$.

By Lemma~\ref{lem:extendcomplement} and Lemma~\ref{lem:cartier-index-ky+e}, there exists a positive integer $m$ depending only on $n,\epsilon$ and $\delta$, such that $mE$ is Cartier.
By $(1)$ of Proposition~\ref{prop:flat-deformation},
we know that there exists a flat morphism$\colon \mathcal{X}\rightarrow \kk$ and a divisor $\mathcal{B}\subset \mathcal{X}$ such that for every $t\in \kk^*$ we have
$(\mathcal{X}_t,\mathcal{B}_t)\simeq (X,B)$ and 
$(X_0,B_0) := (\mathcal{X}_0,\mathcal{B}_0)$ is a cone singularity with vertex $x_0$.

First, we claim that the cone singularities $x_0 \in X_0$ belong to a bounded family.
By $(2)$ of Proposition~\ref{prop:flat-deformation}, we have a $\kk^*$-equivariant finite quotient
$X_0\rightarrow X_0'$ of degree $m$,
such that there is an isomorphism
\[
X_0' \simeq {\rm Spec} \left( 
\bigoplus_{k \geq 0}H^0( E, k(-mE|_E))
\right).
\]
Since $-mE|_E$ is a Cartier divisor the $\kk^*$-action on $X_0'$ has trivial isotropies (see, e.g.~\cite[Proposition 1.3.5.7]{ADHL15}) away from the vertex. 
The log Fano quotient of $(X_0',\Delta_{0}')$ is isomorphic to $(E,\Delta_E)$
which has $\delta$-lc singularities.
On the other hand, by~\cite[Theorem 1]{Mor18a} and~\cite[Lemma 2.20]{Mor18a}, we know that we can write 
\[
K_Y+(1-a_E(X,0))E = \pi^*K_X,
\]
where the rational number $a:=1-a_E(X,0)$ belongs to a finite set which only depend on
$n,\epsilon$ and $\delta$.
Thus, we have the $\qq$-linear equivalence 
\[
-mE|_E \sim_\qq -\frac{m}{1-a}(K_E+\Delta_E),
\]
which implies that the Fano angle of $x_0' \in X_0'$ is bounded
by a constant only depending on $m$ and $a$.
Since the rational numbers $m$ and $a$ only depend on $n,\epsilon$ and $\delta$, we conclude that 
the Fano angle of $x_0'\in X_0'$ is bounded by a constant
which only depends on $n,\epsilon$ and $\delta$.
By Proposition~\ref{prop:control-ld-vertex}, we conclude that $x_0' \in X_0'$ is $\epsilon_0$-lc, for some positive constant $\epsilon_0$ which only depends on $n,\epsilon$ and $\delta$.
Thus, by Lemma~\ref{lem:control-isotropies} and $(3)$ of Proposition~\ref{prop:flat-deformation}, we conclude that the isotropies of $x_0 \in X_0$
have an upper bound $m$ and its log discrepancies have a lower bound $\epsilon_0$.
By~\cite[Theorem 1]{Mor18b}, $x_0\in X_0$ belongs to a bounded family.

Finally, we prove that the pairs $(X_0,B_0)$ are log bounded on a neighborhood of $x_0\in X_0$.
By~\cite[Corollary 3.10]{Sho92} we know that there exists a constant $c_0$, only depending on $c$ and $\delta$, such that the coefficients of $B_0$ are at least $c_0$.
By $(4)$ of Proposition~\ref{prop:flat-deformation}, we know that the blow-up of the vertex
$x_0\in X_0$ is a plt blow-up for the pair $(X_0,B_0)$.
Therefore, we can apply Lemma~\ref{lem:log-bounded-cones}
to conclude that the pairs $(X_0,B_0)$ are log bounded on a neighborhood of the vertex $x_0 \in (X_0,B_0)$
\end{proof}

Now, we turn to prove a more general version of Theorem~\ref{lower-bound}, which consider pairs whose boundary has coefficients on a finite set of rational numbers.

\begin{theorem}\label{lower-bound-boundary}
Let $n$ be a positive integer number, $\epsilon$ and $\delta$ two positive real numbers, and $\mathcal{R}$ a finite set of real numbers.
Let $i$ be a lower semicontinuous (resp. upper semicontinuous) invariant of klt singularities.
Then there exists a constant $i_0$, only depending on $n,\epsilon,\delta,\mathcal{R}$ and $i$
such that for every $n$-dimensional $(\epsilon,\delta)$-lc singularity  
$x\in (X,B)$, 
with the coefficients of $B$  belonging to $\mathcal{R}$,
we have $i(x\in (X,B))\geq i_0$
(resp. $i(x\in (X,B))\leq i_0)$.
\end{theorem}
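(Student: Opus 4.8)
The plan is to reduce Theorem~\ref{lower-bound-boundary} to Theorem~\ref{thm:bounded-deformations-boundary} by exploiting the semicontinuity of $i$ along the deformation $(\mathcal{X},\mathcal{B})\rightarrow \mathbb{A}^1$ constructed in Proposition~\ref{prop:flat-deformation}. First I would observe that since $\mathcal{R}$ is finite, any $B$ with coefficients in $\mathcal{R}$ has all coefficients $\geq c$ for some fixed $c>0$ depending only on $\mathcal{R}$; thus the hypotheses of Theorem~\ref{thm:bounded-deformations-boundary} are met, and every such $x\in(X,B)$ fits into a deformation $(\mathcal{X},\mathcal{B})\rightarrow \mathbb{A}^1$ with $(\mathcal{X}_t,\mathcal{B}_t)\simeq(X,B)$ for $t$ general and $(\mathcal{X}_0,\mathcal{B}_0)=(X_0,B_0)$ an element of a fixed log bounded family $\mathcal{C}_0$ of cone singularities, where $\mathcal{C}_0$ depends only on $n,\epsilon,\delta,\mathcal{R}$. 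Here I want to be slightly careful: Theorem~\ref{thm:bounded-deformations-boundary} as stated allows replacing $(X,B)$ by $(X,B')$ with a slightly larger $\mathbb{Q}$-boundary; but for the invariant statement I should work directly with the given pair, so I would note that the construction in Proposition~\ref{prop:flat-deformation} applies to the original pair once $mE$ is Cartier, and the $\mathbb{Q}$-coefficient reduction is only needed for the boundedness of the central fiber — the central fiber $(X_0,B_0)$ still lives in a bounded family with boundary coefficients in a fixed finite set (by~\cite[Corollary 3.10]{Sho92}).

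Next I would invoke the log bounded family structure: there is a projective morphism $\mathcal{Y}\rightarrow S$ with $S$ of finite type and a reduced divisor $\mathcal{D}\subset\mathcal{Y}$ such that every central fiber $(X_0,\Supp B_0)$ is isomorphic to some fibre $(\mathcal{Y}_s,\mathcal{D}_s)$. Stratifying $S$ and using Noetherian induction, the number of analytic germ types appearing among the $(x_0\in X_0)$ is, up to the finitely many boundary-coefficient choices, controlled; in particular the set of values $\{i(x_0\in(X_0,B_0))\}$ is bounded below (resp. above) by a constant $i_1$ depending only on $n,\epsilon,\delta,\mathcal{R},i$. This uses that $i$ is an invariant of klt singularities and that the family $\mathcal{C}_0$ is bounded with finitely many boundary types; one may need to pass to a log resolution of the total space $\mathcal{Y}$ and argue that $i$ takes finitely many values along it, or simply cite lower/upper-semicontinuity on the finite-type base $S$ to extract the bound.

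The key step is then semicontinuity along the one-parameter family. Since $i$ is lower-semicontinuous (the other case being symmetric), and since $(\mathcal{X}_t,\mathcal{B}_t)\simeq(X,B)$ specializes to $(\mathcal{X}_0,\mathcal{B}_0)=(X_0,B_0)$ as $t\to 0$, we get
\[
i(x\in(X,B)) = i(\mathcal{X}_t,\mathcal{B}_t) \geq i(\mathcal{X}_0,\mathcal{B}_0) = i(x_0\in(X_0,B_0)) \geq i_1,
\]
so $i_0:=i_1$ works. The main obstacle I anticipate is the second step — making precise that the bounded family $\mathcal{C}_0$ of central cone singularities, together with the finitely many admissible boundary-coefficient vectors, forces $\{i(x_0\in(X_0,B_0))\}$ to be bounded. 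Semicontinuous invariants of klt singularities need not be constant on fibres of a bounded family (e.g.\ they may jump on a proper closed subset), so I would need to either stratify $S$ into finitely many locally closed pieces on each of which the relevant geometry is sufficiently uniform, or invoke semicontinuity of $i$ over the finite-type total space $\mathcal{Y}\to S$ directly to conclude $i$ attains a minimum. This is exactly the point where the definition of "invariant of klt singularities" and its semicontinuity must be used in a genuinely global way, and it is the only part of the argument that is not a formal consequence of Theorem~\ref{thm:bounded-deformations-boundary}.
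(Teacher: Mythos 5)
Your proposal follows the paper's own route exactly: deform to a cone singularity central fiber using Theorem~\ref{thm:bounded-deformations-boundary} and Proposition~\ref{prop:flat-deformation}, apply lower-semicontinuity of $i$ along the one-parameter family, and extract a uniform bound from the log bounded family $\mathcal{C}_0$ of central fibers. The two points you correctly flag as delicate — that the deformation must carry the original pair $(X,B)$ rather than the $\mathbb Q$-approximation $(X,B')$ introduced inside the proof of Theorem~\ref{thm:bounded-deformations-boundary} (which, as you note, is harmless because the Rees-algebra degeneration of Proposition~\ref{prop:flat-deformation} depends only on $X$ and the valuation, and the approximation is only used to establish boundedness of the central fibers), and that boundedness of $\mathcal{C}_0$ together with the finiteness of the central-fiber coefficient set (depending only on $\delta$ and $\mathcal{R}$, via~\cite[Corollary 3.10]{Sho92}) and a Noetherian stratification of the parameter space yields a uniform $i_0$ — are precisely where the paper's write-up is terse, and your proposed resolutions match what the paper implicitly intends.
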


\begin{proof}
Assume $i$ is a lower-semicontinuous invariant of klt singularities. 

Denote by $\mathcal{C}$ the set of $n$-dimensional $(\epsilon,\delta)$-lc pairs $x\in (X,B)$ such that the coefficients of $B$ are contained in $\mathcal{R}$.
By Theorem~\ref{thm:bounded-deformations-boundary}, there exists a log bounded family $\mathcal{C}_0$ of cone singularities $x_0 \in (X_0,B_0)$, such that for every element $(X,B)\in \mathcal{C}$ there is a deformation of pairs $(\mathcal{X},\mathcal{B})\rightarrow \mathbb{A}^1$, such that
$(\mathcal{X}_t, \mathcal{B}_t)\simeq (X,B)$ for any $t\neq 0$, and $(\mathcal{X}_0,{\Supp}\, \mathcal{B}_0)\simeq (X_0,\,\,{\Supp} B_0)$. In the following, we may assume $(\mathcal{X}_0,\mathcal{B}_0) \simeq (X_0,B_0)$ since the coefficients of $B_0$ belong to a finite set. Moreover, the coefficients of $B_0$ belong to a finite set which only depends on $\delta$ and $\mathcal{R}$.
By lower-semicontinuity we have that:
\[
i(x \in (X,B)) = i(x_t \in (\mathcal{X}_t,\mathcal{B}_t)) \geq
i(x_0 \in (\mathcal{X}_0,\mathcal{B}_0)) = 
i(x_0\in (X_0,B_0)).
\]
Moreover, since $x_0\in (X_0,B_0)$ belongs to a log bounded family, we deduce that there exists $i_0$ so that 
$i(x_0 \in (X_0,B_0))\geq i_0$ for all cone singularities in $\mathcal{C}_0$.

Since the family $\mathcal{C}_0$ only depends on $n,\epsilon,\delta$ and $c$, we conclude that $i_0$ only depends on
$n,\epsilon,\delta,c$ and $i$, concluding the proof.

Replacing $i$ by $-i$, we obtain the statement for upper-semicontinuous invariants. 
\end{proof}

\begin{proof}[Proof of Corollary~\ref{upper-bound-mult}]
The proof follows from Theorem~\ref{lower-bound} and the upper semicontinuity of the multiplicity (see, e.g.~\cite{Ben71}).
\end{proof}

\begin{proof}[Proof of Corollary~\ref{upper-bound-analytic-embedding-dimension}]
The proof follows from Theorem~\ref{lower-bound} and~\cite[Proposition 0.35]{Fis76}).
\end{proof}

\section{Boundedness of $(\epsilon,\delta)$-lc surface singularities}\label{sec:proof-bounded-surfaces}\label{sec:4}

In this section, we work over the field of complex numbers $\mathbb C$. We recall a result regarding deformations of surface singularities:

\begin{definition}{\em 
A deformation $\mathcal{X}\rightarrow T$ of a singularity $x\in X$, over a finite dimensional local $\cc$-algebra, is said to be {\em versal} if any other given deformation of $X_0$ over a finite dimensional local $\cc$-algebra can be obtained by base change from $\mathcal{X}\rightarrow T$.
}
\end{definition}

We need the following result by Schlessinger and Pinkham:

\begin{theorem}{\rm(\cite{Sch73,Pin74})}\label{versal-deformation-surfaces}
Let $x\in X$ be an isloated singularity. Then $x\in X$ admits an algebraic versal deformation. Moreover, if $x\in X$ admits a $\cc$-action, then
$\mathcal{X}\rightarrow T$ is $\cc$-equivariant.
\end{theorem}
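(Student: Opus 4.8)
The plan is to follow the classical strategy of Schlessinger~\cite{Sch73} and Pinkham~\cite{Pin74}; since the result is standard I will only outline the argument. First I would produce a \emph{formal} versal deformation. Because $x\in X$ is an isolated singularity, the tangent space $T^1_X$ and obstruction space $T^2_X$ of the deformation functor $\mathrm{Def}_X$ on the category of Artinian local $\cc$-algebras are finite-dimensional over $\cc$; hence $\mathrm{Def}_X$ satisfies Schlessinger's conditions $(H_1)$--$(H_4)$ and admits a hull. This gives a formal versal deformation $\widehat{\mathcal{X}}\to\widehat{T}=\operatorname{Spf}(\widehat{R})$ with $\widehat{R}$ a complete local Noetherian $\cc$-algebra, presentable as the quotient of a power series ring in $\dim_\cc T^1_X$ variables by at most $\dim_\cc T^2_X$ relations.

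Next I would algebraize: by Artin's approximation and algebraization theorems, the formal versal deformation can be replaced by a versal deformation over the spectrum of a $\cc$-algebra of finite type, which is the desired algebraic versal deformation.

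For the last assertion, suppose $x\in X$ admits a $\cc$-action. Then the cotangent complex of $X$, and in particular $T^1_X$ and $T^2_X$, acquire a grading for which all the obstruction maps are homogeneous. The point, due to Pinkham, is that Schlessinger's inductive construction of the hull can be carried out within graded Artinian $\cc$-algebras --- at each order one lifts a homogeneous solution to the next --- so $\widehat{R}$ is graded, i.e. $\widehat{T}$ carries a compatible $\cc$-action and $\widehat{\mathcal{X}}\to\widehat{T}$ is $\cc$-equivariant; since the grading is respected order by order, the algebraization in the previous step can be performed equivariantly as well. The hard part will be the algebraization step and the verification that equivariance survives Artin approximation, but both are established in the cited works, so in our setting it suffices to invoke them.
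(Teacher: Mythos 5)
The paper gives no proof of this theorem; it is stated as a literature citation to Schlessinger and Pinkham, so there is nothing in the paper's text to compare your argument against. Your outline is a reasonable summary of the standard chain of ideas from the cited works: for an isolated singularity $T^1_X$ and $T^2_X$ are finite-dimensional, Schlessinger's criteria then yield a hull (formal versal deformation), and Artin-type algebraization upgrades it to an algebraic family; for the $\cc^*$-equivariant statement one uses Pinkham's observation that the grading induced by the torus action can be propagated order by order through Schlessinger's inductive construction.

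One small correction on the last step. You say ``the algebraization in the previous step can be performed equivariantly as well,'' implicitly invoking an equivariant version of Artin approximation. That is not how Pinkham's argument goes: in the graded setting the algebraization problem essentially disappears, because each graded piece $T^1_{X,\nu}$ is finite-dimensional, the equivariant deformation functor is prorepresented by a ring cut out by finitely many \emph{homogeneous} power series, and homogeneity forces these to be polynomials. Hence the equivariant hull is already of finite type over $\cc$ and carries a $\cc^*$-action by construction; no appeal to approximation is needed. You flag this step as ``the hard part'' and defer to the references, which is fine for a sketch, but the mechanism you gesture at (equivariant Artin approximation) is not the one actually used. Apart from this, the proposal correctly summarizes the cited results, which is all the paper itself relies on.
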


\begin{proof}[Proof of Corollary~\ref{thm:bounded-surfaces}]
First we prove $(1)$.
By Theorem~\ref{demazure-isomorphism-with-boundary}, for each $x_0\in X_0$ in $\mathcal{C}_{2}(\epsilon_0,N)$, there is an isomorphism 
\[
X_0 \simeq {\rm Spec}\left(
\bigoplus_{k\geq 0} H^0(\pp^1, \mathcal{O}_{\pp^1}(kD))
\right)
\]
where $D$ is an ample $\qq$-divisor on $\pp^1$.
By Proposition~\ref{isotropy-control}, the Cartier index of $D$ is $\leq N$. Thus, the denominators of $\{ D\}$ are bounded by $N$. By Proposition~\ref{singularities-base}, we know that the Fano quotient $(\pp^1,\Delta)$ of $x_0\in X_0$ has at most three fractional coefficients on $\Delta$. Henceforth, $D$ has at most three fractional coefficients as well.
Thus, up to an isomorphism on $\pp^1$ and linear equivalence of $D$, we may assume that
\[
D= \frac{a_0}{N}\{0\} + \frac{a_1}{N}\{1\} + \frac{a_\infty}{N}\{\infty\},
\]
where $a_0$ and $a_1$ are contained in $\zz\cap[0,N]$
and $a_\infty$ is an integer.
We claim that there are finitely many possible values for $a_\infty$.
Indeed, since the singularity $x_0 \in X_0$ is $\epsilon_0$-lc, by Proposition~\ref{angle-vs-log-discrepancy}, its Fano angle is at most $1/\epsilon_0$. Therefore,
\[
a_\infty\leq \frac{2N}{\epsilon_0} - (a_0+a_1).
\]
On the other hand, $D$ is an ample $\qq$-divisor so $a_{\infty}>-(a_0+a_1)$.
Thus, we conclude that there are finitely many possible values for $a_{\infty}$ as 
\[
a_{\infty}\in \zz \cap \left(-(a_0+a_1), \frac{2N}{\epsilon_0}-(a_0+a_1)\right].
\]
which proves (1).

By Theorem~\ref{thm:bounded-deformations-boundary},
for $\epsilon$ and $\delta$ positive real numbers,
$(\epsilon,\delta)$-lc surface singularities are bounded up to deformation, and the central fibers of such deformations are surface cone singularities which belong to a bounded family. Henceforth, there exists $\epsilon_0$ and $N$, depending only on $\epsilon$ and $\delta$, such that an $(\epsilon,\delta)$-lc singularity degenerates to a cone singularity in $\mathcal{C}_{2}(\epsilon_0,N)$, which proves (3).

Finally, for each singularity $x_0 \in X_0$ belonging to $\mathcal{C}_{2}(\epsilon_0,N)$,
by Theorem~\ref{versal-deformation-surfaces},
there exists a space of versal deformations of $X_0$, which we will denote by $\mathcal{X}(X_0) \rightarrow S(X_0)$.
Thus, the morphism of schemes of finite type
\[
\coprod_{X_0 \in \mathcal{C}_{2}(\epsilon_0,N)} 
\mathcal{X}(X_0) \rightarrow S(X_0),
\]
is an analytic bounding family for complex $(\epsilon,\delta)$-lc surface singularities.
\end{proof}

\begin{bibdiv}
\begin{biblist}

\bib{Ale94}{article}{
   author={Alexeev, Valery},
   title={Boundedness and $K^2$ for log surfaces},
   journal={Internat. J. Math.},
   volume={5},
   date={1994},
   number={6},
   pages={779--810},
   issn={0129-167X},
   review={\MR{1298994}},
   doi={10.1142/S0129167X94000395},
}

\bib{ADHL15}{book}{
   author={Arzhantsev, Ivan},
   author={Derenthal, Ulrich},
   author={Hausen, J\"{u}rgen},
   author={Laface, Antonio},
   title={Cox rings},
   series={Cambridge Studies in Advanced Mathematics},
   volume={144},
   publisher={Cambridge University Press, Cambridge},
   date={2015},
   pages={viii+530},
   isbn={978-1-107-02462-5},
   review={\MR{3307753}},
}

\bib{AH05}{article}{
   author={Altmann, Klaus},
   author={Hausen, J{\"u}rgen},
   title={Polyhedral divisors and algebraic torus actions},
   journal={Math. Ann.},
   volume={334},
   date={2006},
   number={3},
   pages={557--607},
}

\bib{AHS08}{article}{
   author={Altmann, Klaus},
   author={Hausen, J{\"u}rgen},
   author={S{\"u}ss, Hendrik},
   title={Gluing affine torus actions via divisorial fans},
   journal={Transform. Groups},
   volume={13},
   date={2008},
   number={2},
   pages={215--242},
}

\bib{AIPSV12}{article}{
   author={Altmann, Klaus},
   author={Ilten, Nathan Owen},
   author={Petersen, Lars},
   author={S\"{u}\ss , Hendrik},
   author={Vollmert, Robert},
   title={The geometry of $T$-varieties},
   conference={
      title={Contributions to algebraic geometry},
   },
   book={
      series={EMS Ser. Congr. Rep.},
      publisher={Eur. Math. Soc., Z\"{u}rich},
   },
   date={2012},
   pages={17--69},
   review={\MR{2975658}},
   doi={10.4171/114-1/2},
}

\bib{Ben71}{book}{
   author={Bennett, Bruce M.},
   title={On the characteristic functions of a local ring},
   note={Thesis (Ph.D.)--Columbia University},
   publisher={ProQuest LLC, Ann Arbor, MI},
   date={1971},
   pages={(no paging)},
   review={\MR{2940052}},
}

\bib{Bir16a}{misc}{
  author = {Birkar, Caucher},
  title={Anti-pluricanonical systems on Fano varieties},
  year = {2016},
  note = {https://arxiv.org/abs/1603.05765v3},
}

\bib{Bir16b}{misc}{
  author = {Birkar, Caucher},
  title={Singularities of linear systems and boundedness of Fano varieties},
  year = {2016},
  note = {https://arxiv.org/abs/1609.05543v1},
}

\bib{Bir18}{misc}{
  author = {Birkar, Caucher},
  title={Birational geometry of algebraic varieties},
  year = {2018},
  note = {https://arxiv.org/abs/1801.00013},
}

\bib{Dem88}{article}{
   author={Demazure, Michel},
   title={Anneaux gradu\'{e}s normaux},
   language={French},
   conference={
      title={Introduction \`a la th\'{e}orie des singularit\'{e}s, II},
   },
   book={
      series={Travaux en Cours},
      volume={37},
      publisher={Hermann, Paris},
   },
   date={1988},
   pages={35--68},
   review={\MR{1074589}},
}

\bib{Fis76}{book}{
   author={Fischer, Gerd},
   title={Complex analytic geometry},
   series={Lecture Notes in Mathematics, Vol. 538},
   publisher={Springer-Verlag, Berlin-New York},
   date={1976},
   pages={vii+201},
   review={\MR{0430286}},
}

\bib{Ful98}{book}{
   author={Fulton, William},
   title={Intersection theory},
   series={Ergebnisse der Mathematik und ihrer Grenzgebiete. 3. Folge. A
   Series of Modern Surveys in Mathematics [Results in Mathematics and
   Related Areas. 3rd Series. A Series of Modern Surveys in Mathematics]},
   volume={2},
   edition={2},
   publisher={Springer-Verlag, Berlin},
   date={1998},
   pages={xiv+470},
   isbn={3-540-62046-X},
   isbn={0-387-98549-2},
   review={\MR{1644323}},
   doi={10.1007/978-1-4612-1700-8},
}

\bib{Har10}{book}{
   author={Hartshorne, Robin},
   title={Deformation theory},
   series={Graduate Texts in Mathematics},
   volume={257},
   publisher={Springer, New York},
   date={2010},
   pages={viii+234},
   isbn={978-1-4419-1595-5},
   review={\MR{2583634}},
   doi={10.1007/978-1-4419-1596-2},
}

\bib{HK10}{book}{
   author={Hacon, Christopher D.},
   author={Kov\'acs, S\'andor J.},
   title={Classification of higher dimensional algebraic varieties},
   series={Oberwolfach Seminars},
   volume={41},
   publisher={Birkh\"auser Verlag, Basel},
   date={2010},
   pages={x+208},
   isbn={978-3-0346-0289-1},
   review={\MR{2675555}},
}

\bib{HLS19}{article}{
   author={Han, Jingjun},
   author={Liu, Jihao},
   author={Shokurov, Vyacheslav V.},
   title={ACC for minimal log discrepancies of exceptional singularities},
   note={https://arxiv.org/abs/1903.04338},
   date={2019},
}
		
\bib{KM98}{book}{
   author={Koll\'ar, J\'anos},
   author={Mori, Shigefumi},
   title={Birational geometry of algebraic varieties},
   series={Cambridge Tracts in Mathematics},
   volume={134},
   note={With the collaboration of C. H. Clemens and A. Corti;
   Translated from the 1998 Japanese original},
   publisher={Cambridge University Press, Cambridge},
   date={1998},
   pages={viii+254},
   isbn={0-521-63277-3},
   review={\MR{1658959}},
}

\bib{Kol13}{book}{
   author={Koll\'{a}r, J\'{a}nos},
   title={Singularities of the minimal model program},
   series={Cambridge Tracts in Mathematics},
   volume={200},
   note={With a collaboration of S\'{a}ndor Kov\'{a}cs},
   publisher={Cambridge University Press, Cambridge},
   date={2013},
   pages={x+370},
   isbn={978-1-107-03534-8},
   review={\MR{3057950}},
   doi={10.1017/CBO9781139547895},
   }
   
   \bib{Kud01}{article}{
    AUTHOR = {Kudryavtsev, S. A.},
     TITLE = {On purely log terminal blow-ups},
   JOURNAL = {Mat. Zametki},
  FJOURNAL = {Rossi\u{\i}skaya Akademiya Nauk. Matematicheskie Zametki},
    VOLUME = {69},
      YEAR = {2001},
    NUMBER = {6},
     PAGES = {892--898},
      ISSN = {0025-567X},
   MRCLASS = {14E05 (14B05 14E15)},
  MRNUMBER = {1861570},
       DOI = {10.1023/A:1010234532502},
       URL = {https://doi.org/10.1023/A:1010234532502},
}

\bib{LS13}{article}{
   author={Liendo, Alvaro},
   author={S\"{u}ss, Hendrik},
   title={Normal singularities with torus actions},
   journal={Tohoku Math. J. (2)},
   volume={65},
   date={2013},
   number={1},
   pages={105--130},
   issn={0040-8735},
   review={\MR{3049643}},
   doi={10.2748/tmj/1365452628},
}

\bib{LX16}{misc}{
  author ={Li, Chi}
  author = {Xu, Chenyang},
  title={Stability of Valuations and Koll\'ar Components},
  year = {2016},
  note = {https://arxiv.org/abs/1604.05398},
}
		
\bib{LX17}{misc}{
  author ={Li, Chi}
  author = {Xu, Chenyang},
  title={Stability of Valuations: Higher Rational Rank},
  year = {2017},
  note = {https://arxiv.org/abs/1707.05561},
}

\bib{Mor18a}{misc}{
  author = {Moraga, Joaqu\'in},
  title={On minimal log discrepancies and Koll\'ar components},
  year = {2018},
  note = {https://arxiv.org/abs/1810.10137},
}

\bib{Mor18b}{misc}{
  author = {Moraga, Joaqu\'in},
  title={A boundedness theorem for cone singularities},
  year = {2018},
  note = {https://arxiv.org/abs/1812.04670},
}

\bib{Pin74}{book}{
   author={Pinkham, Henry Charles},
   title={Deformations of algebraic varieties with $G_m$ action},
   note={Thesis (Ph.D.)--Harvard University},
   publisher={ProQuest LLC, Ann Arbor, MI},
   date={1974},
   pages={(no paging)},
   review={\MR{2940447}},
}
	
\bib{Pro00}{article}{
   author={Prokhorov, Yuri G.},
   title={Boundedness of nonbirational extremal contractions},
   journal={Internat. J. Math.},
   volume={11},
   date={2000},
   number={3},
   pages={393--411},
   issn={0129-167X},
   review={\MR{1769614}},
   doi={10.1142/S0129167X00000207},
}

\bib{PS01}{article}{
   author={Prokhorov, Yuri G.},
   author={Shokurov, Vyacheslav V.},
   title={The first main theorem on complements: from global to local},
   journal={Izvestiya: Mathematics},
   volume={65(6)},
   date={2001},
   pages={1169--1196},

}

\bib{PS11}{article}{
   author={Petersen, Lars},
   author={S\"{u}ss, Hendrik},
   title={Torus invariant divisors},
   journal={Israel J. Math.},
   volume={182},
   date={2011},
   pages={481--504},
   issn={0021-2172},
   review={\MR{2783981}},
   doi={10.1007/s11856-011-0039-z},
}

\bib{Sch73}{article}{
   author={Schlessinger, Michael},
   title={On rigid singularities},
   note={Complex analysis, 1972 (Proc. Conf., Rice Univ., Houston, Tex.,
   1972), Vol. I: Geometry of singularities},
   journal={Rice Univ. Studies},
   volume={59},
   date={1973},
   number={1},
   pages={147--162},
   issn={0035-4996},
   review={\MR{0344519}},
}
	
\bib{Sho92}{article}{
   author={Shokurov, V. V.},
   title={Three-dimensional log perestroikas},
   language={Russian},
   journal={Izv. Ross. Akad. Nauk Ser. Mat.},
   volume={56},
   date={1992},
   number={1},
   pages={105--203},
   issn={1607-0046},
   translation={
      journal={Russian Acad. Sci. Izv. Math.},
      volume={40},
      date={1993},
      number={1},
      pages={95--202},
      issn={1064-5632},
   },
   review={\MR{1162635}},
   doi={10.1070/IM1993v040n01ABEH001862},
}

\bib{Sho96}{article}{
   author={Shokurov, V. V.},
   title={$3$-fold log models},
   note={Algebraic geometry, 4},
   journal={J. Math. Sci.},
   volume={81},
   date={1996},
   number={3},
   pages={2667--2699},
   issn={1072-3374},
   review={\MR{1420223}},
   doi={10.1007/BF02362335},
}

\bib{Sho00}{article}{
   author={Shokurov, V. V.},
   title={Complements on surfaces},
   note={Algebraic geometry, 10},
   journal={J. Math. Sci. (New York)},
   volume={102},
   date={2000},
   number={2},
   pages={3876--3932},
   issn={1072-3374},
   review={\MR{1794169}},
   doi={10.1007/BF02984106},
}

\bib{Tei03}{article}{
   author={Teissier, Bernard},
   title={Valuations, deformations, and toric geometry},
   conference={
      title={Valuation theory and its applications, Vol. II},
      address={Saskatoon, SK},
      date={1999},
   },
   book={
      series={Fields Inst. Commun.},
      volume={33},
      publisher={Amer. Math. Soc., Providence, RI},
   },
   date={2003},
   pages={361--459},
   review={\MR{2018565}},
}

\bib{Wah76}{article}{
   author={Wahl, Jonathan M.},
   title={Equisingular deformations of normal surface singularities. I},
   journal={Ann. of Math. (2)},
   volume={104},
   date={1976},
   number={2},
   pages={325--356},
   issn={0003-486X},
   review={\MR{0422270}},
   doi={10.2307/1971049},
}

\bib{Wat81}{article}{
   author={Watanabe, Keiichi},
   title={Some remarks concerning Demazure's construction of normal graded
   rings},
   journal={Nagoya Math. J.},
   volume={83},
   date={1981},
   pages={203--211},
   issn={0027-7630},
   review={\MR{632654}},
}

\bib{Xu14}{article}{
   author={Xu, Chenyang},
   title={Finiteness of algebraic fundamental groups},
   journal={Compos. Math.},
   volume={150},
   date={2014},
   number={3},
   pages={409--414},
   issn={0010-437X},
   review={\MR{3187625}},
   doi={10.1112/S0010437X13007562},
}

\end{biblist}
\end{bibdiv}
\end{document}